\documentclass[twoside,letterpaper,draft,11pt]{amsart}
\usepackage{amsmath,amsthm,latexsym,xspace,amscd,amssymb,mathtools,color,enumerate}

\DeclareUnicodeCharacter{00B4}{}

\usepackage[utf8]{inputenc}

\usepackage[mathscr]{eucal}
\usepackage{amssymb}
\usepackage[all]{xy}
\usepackage{amssymb,amsfonts,amsmath,amsthm,amscd}
\usepackage{relsize}
\usepackage{color}
\usepackage[english]{babel}
\makeindex

\makeatletter\renewcommand\theenumi{\@roman\c@enumi}\makeatother

\newtheorem{thm}{Theorem}[section]
\newtheorem{prop}[thm]{Proposition}
\newtheorem{lem}[thm]{Lemma}
\newtheorem{cor}[thm]{Corollary}
\theoremstyle{definition}
\newtheorem{defi}[thm]{Definition}
\newtheorem{exa}[thm]{Example}
\newtheorem{rem}[thm]{Remark}

\DeclareMathOperator{\Ima}{im}

\newcommand{\cC}{\mathcal{C}}

\newcommand{\gm}[1]{\operatorname{END}_{A}(#1)}
\newcommand{\indf}{\mathtt{Ind_R^A}\,}

\newcommand{\m}{{}^{-1}}

\newcommand{\ep}{\epsilon}

\newcommand{\U}{\mathcal{U}}
\newcommand{\Z}{\mathbb{Z}}
\newcommand{\N}{\mathbb{N}}
\newcommand{\af}{\alpha}
\newcommand{\afg}{\alpha_g}
\newcommand{\afh}{\alpha_h}
\newcommand{\afgh}{\alpha_{gh}}
\newcommand{\omgh}{\omega_{g,h}}
\newcommand{\om}{\omega}

\newcommand{\vm}{\vspace{.05cm}}

\newcommand{\id}{\operatorname{id}}
\newcommand{\sd}{\scriptscriptstyle{\rm sd}}

\newcommand{\lmod}[1]{\hspace{-2pt}{}_{#1}\mathcal{M}}

\newcommand{\pic}[1]{\mathtt{PicS}(#1)}

\newcommand{\compo}[1]{\mathtt{Comp}(#1)}
\newcommand{\picm}[1]{\mathtt{PicS_{R^\gamma}}(#1)}
\begin{document}
	
	\thispagestyle{empty}
	
	\title[Graded equivalences]{Epsilon-strongly graded rings: Azumaya algebras and partial crossed products}
	
	\author[D. Bagio]{Dirceu Bagio}
	\address{ Departamento de Matem\'atica, Universidade Federal de Santa Catarina, 88040-900\\
		Florian\'opolis, Brasil}
	\email{d.bagio@ufsc.br}
	
	\author[L. Mart\'inez]{Luis Mart\'{i}nez}
	\address{Escuela de Matematicas, Universidad Industrial de Santander, Cra. 27 Calle 9, ´ UIS
		Edificio 45, Bucaramanga, Colombia} \email{luchomartinez9816@hotmail.com}
	
	\author[H. Pinedo]{H\'ector Pinedo}
	\address{Escuela de Matematicas, Universidad Industrial de Santander, Cra. 27 Calle 9, ´ UIS
		Edificio 45, Bucaramanga, Colombia}
	\email{hpinedot@uis.edu.co}

	\thanks{{\bf  Mathematics Subject Classification 2020}: Primary  16W50, 16W22. Secondary 16W55, 13A50.
	}
	\thanks{{\bf Keywords and phrases:} Epsilon-strongly graded ring, Azumaya algebra, partial crossed product and graded equivalence.
	}

	\date{\today}
	\begin{abstract}  Let $G$ be a group,  $A=\mathlarger{\mathlarger{\oplus}}_{g\in G}\,A_g$  be an epsilon-strongly graded ring, $R:=A_1$ the homogeneous component associates to the identity of $G$ and   $\mathtt{PicS}(R)$ be  the Picard semigroup of $R$. In the first part of this paper, we prove that the isomorphism class $[A_g]$ is an element of $\mathtt{PicS}(R)$, for all $g\in G$. Moreover, the association $g\mapsto [A_g]$ determines a partial representation of $G$ on $\mathtt{PicS}(R)$ which induces a partial action $\gamma$ of $G$ on the center $Z(R)$ of $R$. Sufficient conditions for $A$ to be an Azumaya $R^{\gamma}$-algebra are presented in the case that $R$ is commutative. In the second part, we study when $B$ is a partial crossed product in the following cases: $B=\operatorname{M}_n(A)$ is the ring of matrices  with entries in $A$, or $B=\operatorname{END}_{A}(M)=\bigoplus_{l \in G} \operatorname{Mor}_A(M,M)_l$ is the direct sum of graded endomorphisms of left graded $A$-module $M$ with degree $l$, or $B=\operatorname{END}_{A}(M)$ where $M=A\otimes_{R}N$ is the induced module of a left $R$-module $N$. Assuming that $R$ is semiperfect, we prove that there exists a subring of $A$ which is an epsilon-strongly graded ring over a subgroup of $G$ and it is graded equivalent to a partial crossed product. 
\end{abstract}

 \maketitle

\setcounter{tocdepth}{1}

\section{Introduction}

Let $G$ be a group  and $A,A'$ be $G$-graded rings. We recall from \cite{H0} that $A$ and $A'$ are  $G$-graded equivalent (or only graded equivalent) provided  that there exists a graded $A$-module $P$ such that $P$ is an $A$-progenerator and $\operatorname{END}_A(P)\simeq A'$ as graded rings, where $\gm{M}$ is given in \eqref{graded-end}. For instance, every $G$-graded ring $A$ is graded equivalent to the matrix ring $\operatorname{M}_n(A)$ with entries in $A$, for some $n\in \N$; see Example 4 of \cite{H}. Among the  important results around graded equivalence  one must highlight the Cohen and Montgomery duality
work of \cite{CM} and  the graded Artin representation work of \cite{GG}. In both, the authors use the following fact: if $A$ and $A'$ are graded equivalent
rings then not only the categories of left graded modules  $\operatorname{A-gr}$ and $\operatorname{A}'\operatorname{\!-gr}$ are equivalent but also  the categories of left modules $\lmod{A}$ and $\lmod{A'}$ are  equivalent.

On the other hand, it is shown in Remark 1.4 of \cite{mnas} that there are Morita equivalent graded rings without being  graded equivalent.  The existence of a category equivalence  between $\operatorname{A-gr}$ and $\operatorname{A}'\operatorname{\!-gr}$ does not imply  that  $A$ and $A'$ are Morita equivalent; see \cite{GG}. We also remark that a related concept to graded equivalence is graded Morita equivalence which was introduced in \cite{H2},  and in  general, graded Morita equivalence is a stronger condition
than graded equivalence; see Corollary 2.8 in \cite{H0}.

Recently was introduced and studied in \cite{NYOP} the notion of epsilon-strongly graded rings which is a natural generalization of strongly graded rings and partial crossed products; see \cite[pg. 2]{NYOP} for more details.  Relevant families of algebras/rings  can be endowed with a grading such that  they become epsilon-strong, are for instance: corner skew Laurent polynomial rings (see Theorem 8.1 of \cite{La}), Leavitt path algebras of finite graphs (see Theorem 1.2 of \cite{NYO}) and  Morita rings (see Section 8 of \cite{NYOP}).

In this work we are interested in studying questions related to epsilon-strongly graded rings. Precisely, we will explore the following problems.

\begin{enumerate}[(Q1)]

\item 	Let $G$ be a group, $A=\mathlarger{\mathlarger{\oplus}}_{g\in G}\,A_g$ be  an epsilon-strongly graded ring and $\pic{R}$ be  the Picard semigroup of $R:=A_1$. Then $A_g$ is an $(R,R)$-bimodule, for all $g\in G$. Is the isomorphism class $[A_g]$ an element of $\pic{R}$? 

\item Let $G$ be a group and be $A=\mathlarger{\mathlarger{\oplus}}_{g\in G}\,A_g$ an epsilon-strongly graded ring. Then the ring of matrices $\operatorname{M}_n(A)$ with entries in $A$ is also an epsilon-strongly graded ring. Characterize when $\operatorname{M}_n(A)$ is a partial crossed product.

\item Let $G$ be a group, $A$  be $G$-graded ring and $M$ be  a left graded $A$-module. Denote by $\operatorname{Mor}_A(M,M)_l$ the abelian group of graded endomorphisms of $M$ with degree $l$ and let  $$\gm{M}=\mathlarger{\mathlarger{\oplus}}_{l \in G} \operatorname{Mor}_A(M,M)_l,$$ which is a graded ring. Determine under what conditions $\gm{M}$ is an epsilon-strongly graded ring/a partial crossed product. We will also address these questions for the particular case where $M=A\otimes_R N$ is the induced module of a left $R$-module $N$.

\item Let $G$ be a group, $A=\mathlarger{\mathlarger{\oplus}}_{g\in G}\,A_g$  be an epsilon-strongly $G$-graded ring such that $R:=A_1$ is semiperfect. Prove that there are a subgroup $H$ of $G$ and an epsilon-strongly $H$-graded ring $A_H$ which is a subring of $A$ and such that $A_H$ is graded equivalent to a partial crossed product. 	
\end{enumerate}

In order to solve the above questions, we consider in Section 2 some notions and results related to partial actions and to epsilon-strongly graded rings that will be useful throughout the work.

In Section 3, we will answer affirmatively to (Q1), that is, $[A_g]$ is an element in $\pic{R}$, for all $g\in G$. Moreover,  the association $g\mapsto [A_g]$ is a unital partial representation of $G$ on  $\pic{R}$. Hence, this partial representation induces an action $\gamma$ of $G$ on the center $Z(R)$ of $R$. We also prove that if $R$ is commutative and $R^\gamma\subset R$ is a partial Galois extension then $A$ is an Azumaya  $R^\gamma$-algebra containing $R$ as a maximal commutative subalgebra.

The question (Q2) is treated in Section 4. It is easy to verify that if $A$ is epsilon-strongly graded then so is $\operatorname{M}_n(A)$; see Example \ref{exam}. Using some new characterizations of epsilon-invertible elements/epsilon-crossed products proved in Lemma \ref{equiv}, we present in Theorem \ref{matcro} necessary and sufficient conditions   for $\operatorname{M}_n(A)$ to be a partial crossed product. 

Section 5 is inspired by  \cite{Da} and it is dedicated to the question (Q3).  Denote by $\cC$ the category of left $A$-graded modules and let $N,M\in \cC$. According to \cite[pg. 269]{KT}, we say that $N$ divides $M$ (we write $ N \,|\, M$ in $\cC$) if $N$ is isomorphic to a graded direct summand of $M^{(n)}$, for some $n\in \N$. This notion is generalized to the following one.  We will say that $N$ semi-divides $M$ if there exists a non-zero graded direct summand $N'$ of $N$ such that $ N' \,|\, M$ in $\cC$ and  $\epsilon_{N'}\circ f=f$ and $g\circ\epsilon_{N'}=g$ (where $\epsilon_{N'}$ is given by \eqref{def-epN'} in $\S\,$\ref{sub-51}), for all  $f\in  \operatorname{Mor}_{ \cC}(M,N)$ and $g\in \operatorname{Mor}_{ \cC}(N,M)$. 
In this case, we denote $N\,|_{\sd}\, M$. When $N\,|_{\sd}\, M$ and $M\,|_{\sd}\, N$ we write $M\sim_{\sd} N$. Consider the graded ring $B=\gm{M}$ defined in \eqref{graded-end} of $\S\,$\ref{sub-51}. 
We prove in Theorem \ref{adivides} that $B$ is an epsilon-strongly graded ring  if and only if $M\sim_{\sd} M(l)$, for all $l \in  \operatorname{supp}(B)$.
Furthermore $M$ and $N$ are called  epsilon-similar if there are non-zero graded direct summands $M'$ of  $M$ and $N'$  of $N$, and morphisms  $f\in  \operatorname{Mor}_{ \cC}(M,N)$ and $g\in\operatorname{Mor}_{ \cC}(N,M)$ such that
\begin{align*}
	&f\circ g=\epsilon_{N'},& &g\circ f=\epsilon_{M'},& &\epsilon_{N'}\circ u=u=u\circ\epsilon_{M'},& &v\circ\epsilon_{N'}=v=\epsilon_{M'}\circ v,&	
\end{align*}	
for all  $u\in  \operatorname{Mor}_{ \cC}(M,N)$ and $v\in \operatorname{Mor}_{ \cC}(N,M)$. In Theorem \ref{pcrp}, we prove the following. 
The ring $B$ is a partial crossed product if and only if $\operatorname{supp}(B)$ is closed under inversion and $M$ and $M(l)$ are epsilon-similar, for all $l \in \operatorname{supp}(B)$.
Suppose that $N$ is a left $R$-module and consider $M= A\otimes_R N$ which is an object in $\cC$ because $M_g= A_g\otimes_R N$, $g\in G$, is a grading to $M$. In Proposition \ref{astor} (resp. Proposition \ref{epcros0}) we present sufficient conditions for $\gm{B}$ to be an epsilon-strongly graded ring (resp. a partial crossed product). \smallbreak

Let $G$ be a group and $A$ and $B$ graded rings over $G$. In order to solve question (Q4), 
 Assume that $A=\mathlarger{\mathlarger{\oplus}}_{g\in G}\,A_g$  be an epsilon-strongly $G$-graded ring such that $R:=A_1$ is semiperfect. In Section 6, it will be proved in Theorem \ref{semicase} that there are a subgroup $G_{\mathcal{E}}$ of $G$ and  an epsilon-strongly $G_{\mathcal{E}}$-graded ring $A_{G_{\mathcal{E}}}$ which is a subring of $A$ and such that $A_{G_{\mathcal{E}}}$ is graded equivalent to a partial crossed product.

\subsection*{Conventions}\label{subsec:conv}
Throughout this work, unless we state, all rings will be considered associative and unital.  
The unit group and the center of a ring $R$ will be denoted respectively by  $\U(R)$ and $Z(R)$. We will work with unital modules/bimodules.
Given a left (resp. right) $R$-module $M$ we will write $\operatorname{Ann}(_RM)=\{r\in R\,:\,rM=0\}$ (resp. $\operatorname{Ann}(M_R)=\{r\in R\,:\,Mr=0\}$) which is an ideal of $R$. The action of an element $r\in R$ on an element $m\in M$ will be denote by  $rm$ (instead of $r\cdot m$).
For a group $G$, the neutral element will be denoted by $1_G$ or simply $1$.
The category of left modules over a unital ring $R$ will be denoted by $\lmod{R}$. For subsets $X$ and $Y$ of the ring $R$, we let $XY$ be the set consisting of finite sums  of elements of the form $xy,$ with $x\in X, y\in Y$. We will assume that each unital subring $S$ of $R$ has the same identity of $R$, that is, $1_S=1_R$. As usual, $\N=\{1,2,\ldots\}$ denotes the natural numbers.

\section{Preliminaries}\label{pre}

In this section we introduce the background about twisted partial actions and epsilon-strongly graded rings that will
be used in the work.

\subsection{Twisted partial  actions}\label{sub-wpa}
Let $G$ be a group and $A$ a ring. According \cite{DES}, a  {\it unital twisted  partial action} of $G$ on a ring $A$ is a  triple $$\af=(\{D_g\}_{g\in G}, \{\afg\}_{g\in G}, \{\omgh\}_{(g,h)\in G\times G}),$$
where $D_g$ is an ideal of $A$ generated by a non-necessarily non-zero central idempotent $1_g$ of $A$, $\afg\colon D_{g\m}\to D_g$
is a ring isomorphism and $\omgh\in \U(D_gD_{gh})$, $g,h\in G$, and the following statements are satisfied: for all $g,h,l\in G$,
\begin{enumerate}[$\,\,\,$(T1)]
\item  $D_1=A$ and $\af_1$ is the identity map of $A$,\vm

\item  $\afg(D_{g\m}D_h)=D_gD_{gh}$,\vm

\item $\afg \circ \afh(t)=\om_{g,h}\afgh(t)\om\m_{g,h}$, for any $t\in D_{h\m} D_{(gh)\m}$,\vm

\item $\om_{1,g}=\om_{g,1}=1_g$,\vm

\item $\afg(\om_{h,l}1_{g\m})\om_{g,hl}=\omgh\om_{gh,l}$.\vm
\end{enumerate}
It follows from (T2) that
\begin{equation}\label{afgh} \afg(1_{g\m}1_h)=1_g1_{gh},\quad g, h\in G.
\end{equation}

The family $\om=\{\omgh\}_{(g,h)\in G\times G}$ is called a {\it twisting} of $\af$ and the above twisted partial action will be denoted by $(\af, \om)$. We say that $\alpha$ is a {\it twisted  global action} provided that $D_g=A$, for all $g\in G$. 

 We also recall from \cite{DES} that the {\it partial crossed product} $A\star_{\af,\om}G$ associated to a unital twisted partial action  $(\af,\om)$ of $G$ on a ring $A$ is the direct sum $\bigoplus\limits_{g\in G}D_g\delta_g$,
in which $\delta_g'$s are symbols, with the usual sum  and multiplication induced by the rule
\begin{equation}\label{parprod}(a_g\delta_g)  (b_h\delta_h) = a_g\af_g(b_h1_{g\m})\om_{g,h}\delta_{gh},\end{equation}for all $g,h\in G$, $a_g\in D_g$ and $b_h\in D_h$. 
Moreover, by Proposition 2.3 (ii) of \cite{DE} and Theorem 2.4 of \cite{DES} that $A\star_{\af,\om}G$ is an associative ring. \smallbreak

\subsection{Epsilon-strongly graded rings}
We start by recalling that  a ring  $A$ is {\it graded} by $G$ (or $G$-graded) if there exists  a family $\{A_g\}_{g\in G}$ of additive subgroups of $A$ such that $A=\mathlarger{\mathlarger{\oplus}}_{g\in G}\,A_g$ and $A_gA_h\subseteq A_{gh}$, for all $g,h\in G$. If $A_gA_h=A_{gh}$, for all $g,h\in G$, then $S$ is said {\it strongly graded}.
The following notion is a generalization of strongly graded ring and was introduced in \cite{NYOP}.

\begin{defi}\label{definitionepsilon}
	Let $A=\mathlarger{\mathlarger{\oplus}}_{g\in G}\,A_g$ be a unital $G$-graded ring.
	We say that $A$ is {\it epsilon-strongly graded by} $G$
	if for each $g \in G$, $A_g A_{g^{-1}}$ is a unital ideal of $A_1$ and 
	$A_g$ is a $(A_gA_{g\m}, A_{g\m}A_g)$-bimodule.
\end{defi}

Its is clear that every unital  strongly graded ring is epsilon-strongly graded. The converse is not true, as we can see in the next.

\begin{exa}\label{ex-tpcp}
	Let $(\alpha,\omega)=(\{D_g\}_{g\in G}, \{\afg\}_{g\in G}, \{\om_{g,l}\}_{(g,l)\in G\times G})$	be a twisted partial action of  $G$ on  $A$. The partial crossed product   $B=A\star_{\af,\om}G$ is an epsilon-strongly graded ring with homogeneous component $B_g=D_g\delta_g$. 
	It is easy to check that $B$ is strongly graded if and only if $\alpha$ is a twisted global action of $G$ on $A$.
\end{exa}

Let $A=\mathlarger{\mathlarger{\oplus}}_{g\in G}\,A_g$ be  a $G$-graded ring. According \cite[Definition 4.5]{CEP}, the ring $A$ is {\it symmetrically graded}  (or {\it partially-strongly graded}) if 
\begin{equation}\label{symg}A_gA_{g\m}A_g=A_g,\quad \text{for all } g\in G.\end{equation} 
Now we present some characterizations of epsilon-strongly graded rings. A proof of this result can be seen in Proposition 3.1 of \cite{NYO} and Proposition 7 of \cite{NYOP}.
\begin{prop}\label{epsilon1}Let $A=\mathlarger{\mathlarger{\oplus}}_{g\in G}\,A_g$ be a  $G$-graded ring. The following statements are equivalent:
	\begin{enumerate}[\rm (i)]
		
		\item $A$ is an epsilon-strongly graded ring,
		\item  $A$ is symmetrically graded  and  
		$A_g A_{g^{-1}}$ is a unital ideal of $A_1$, for all $g\in G$,
		
		\item for every $g \in G$ there exists $\epsilon_g \in A_g A_{g^{-1}}$
		such that $\epsilon_g a = a = a \epsilon_{g^{-1}}$, for all $a\in A_g$,
		
		\item $A_g A_{g^{-1}}$ is a unital ideal of $A_1$
		such that $A_g A_h = A_g A_{g^{-1}} A_{gh} = A_{gh} A_{h^{-1}} A_h$, for all $g,h \in G$.
	\end{enumerate} 
\end{prop}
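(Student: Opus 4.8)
The plan is to prove the four conditions equivalent by establishing the cycle (i) $\Rightarrow$ (ii) $\Rightarrow$ (iii) $\Rightarrow$ (iv) $\Rightarrow$ (i). The conceptual heart of the statement is condition (iii): the existence, for each $g$, of a local unit $\epsilon_g \in A_g A_{g^{-1}}$ acting as a two-sided identity on the homogeneous component $A_g$. Once this element is available, the remaining assertions reduce to elementary manipulations with the grading inclusions $A_g A_h \subseteq A_{gh}$ and $A_1 A_g, A_g A_1 \subseteq A_g$, together with the fact that $1_A \in A_1$ for a unital graded ring.

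For (i) $\Rightarrow$ (ii) the ideal condition is part of the hypothesis, so only the symmetric grading $A_g A_{g^{-1}} A_g = A_g$ must be checked. Since $A_g$ is by assumption a unital left $A_g A_{g^{-1}}$-module, the identity $e_g$ of this unital ideal satisfies $e_g a = a$ for all $a \in A_g$; as $e_g \in A_g A_{g^{-1}}$ this gives $A_g = e_g A_g \subseteq A_g A_{g^{-1}} A_g \subseteq A_1 A_g \subseteq A_g$, forcing equality. For (ii) $\Rightarrow$ (iii) I would take $\epsilon_g$ to be the identity $e_g$ of the unital ideal $A_g A_{g^{-1}}$: writing an arbitrary $a \in A_g = A_g A_{g^{-1}} A_g$ as a sum of products $xyz$ with $xy \in A_g A_{g^{-1}}$ and $z \in A_g$, the fact that $e_g$ is the identity of $A_g A_{g^{-1}}$ yields $e_g a = a$, while the right-hand identity $a \epsilon_{g^{-1}} = a$ follows symmetrically by grouping $a = x(yz)$ with $yz \in A_{g^{-1}} A_g$.

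The main work lies in (iii) $\Rightarrow$ (iv). First I would show $\epsilon_g$ is the identity of the ring $A_g A_{g^{-1}}$: it is a left identity since $\epsilon_g$ fixes each $x \in A_g$ on the left, and a right identity since, applying condition (iii) to $g^{-1}$, each $y \in A_{g^{-1}}$ satisfies $y \epsilon_g = y$; in particular $\epsilon_g$ is idempotent. The grading inclusions immediately give that $A_g A_{g^{-1}}$ is a two-sided ideal of $A_1$. The delicate point, and the step I expect to require the most care, is the centrality of $\epsilon_g$ in $A_1$: for $r \in A_1$ both $\epsilon_g r$ and $r \epsilon_g$ lie in the ideal $A_g A_{g^{-1}}$, and a sandwich argument using that $\epsilon_g$ is a two-sided identity of this ring gives $\epsilon_g r = \epsilon_g r \epsilon_g = r \epsilon_g$. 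Hence $A_g A_{g^{-1}} = \epsilon_g A_1$ is a unital ideal generated by a central idempotent. The two factorizations $A_g A_h = A_g A_{g^{-1}} A_{gh} = A_{gh} A_{h^{-1}} A_h$ then follow by double inclusion: the inclusions $\subseteq$ come from inserting $\epsilon_g$ on the left (resp. $\epsilon_{h^{-1}}$ on the right) of a product $ab$ with $a \in A_g$, $b \in A_h$, using $\epsilon_g a = a$ (resp. $b\epsilon_{h^{-1}} = b$), while the reverse inclusions come from collapsing $A_{g^{-1}} A_{gh} \subseteq A_h$ (resp. $A_{gh} A_{h^{-1}} \subseteq A_g$).

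Finally, for (iv) $\Rightarrow$ (i) the ideal hypothesis is carried over directly, and specializing the factorization to $h = 1$ (where $A_g A_1 = A_g$ because $1 \in A_1$) recovers the symmetric grading $A_g A_{g^{-1}} A_g = A_g$. This, together with the unital structure of the ideals $A_g A_{g^{-1}}$ and $A_{g^{-1}} A_g$ and the inclusions $A_g A_{g^{-1}} A_g \subseteq A_g$, makes $A_g$ a unital $(A_g A_{g^{-1}}, A_{g^{-1}} A_g)$-bimodule, exactly as required by the definition, which closes the cycle.
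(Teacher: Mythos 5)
Your proposal is correct. Note that the paper does not actually supply a proof of this proposition: it defers to Proposition 3.1 of \cite{NYO} and Proposition 7 of \cite{NYOP}, so there is no in-paper argument to compare against. Your self-contained cycle (i) $\Rightarrow$ (ii) $\Rightarrow$ (iii) $\Rightarrow$ (iv) $\Rightarrow$ (i) is sound and is essentially the standard argument from those references: the key verifications all check out, namely that the identity $e_g$ of the unital ideal $A_gA_{g^{-1}}$ acts as the asserted one-sided units via the decomposition $A_g=A_gA_{g^{-1}}A_g$, that applying (iii) to $g^{-1}$ makes $\epsilon_g$ a two-sided idempotent identity of $A_gA_{g^{-1}}$, that the sandwich $\epsilon_g r=\epsilon_g r\epsilon_g=r\epsilon_g$ gives centrality in $A_1$ (recovering the content of Remark \ref{rem-epsilon}), and that the two factorizations in (iv) follow by inserting $\epsilon_g$ on the left or $\epsilon_{h^{-1}}$ on the right and collapsing $A_{g^{-1}}A_{gh}\subseteq A_h$, $A_{gh}A_{h^{-1}}\subseteq A_g$. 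The only implicit inputs are the paper's convention that bimodules are unital (used in (i) $\Rightarrow$ (ii)) and the standard fact that $1_A\in A_1$ for a unital graded ring (used in (iv) $\Rightarrow$ (i) when specializing to $h=1$); both are legitimate and you flag them.
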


\begin{rem}\label{rem-epsilon}
	Suppose that $A=\mathlarger{\mathlarger{\oplus}}_{g\in G}\,A_g$ is  an epsilon-strongly graded ring. The element $\epsilon_g$ from Proposition \ref{epsilon1} (iii) is the multiplicative identity element of the ideal  $A_g A_{g^{-1}}$ of $A_1$ and $\epsilon_g\in Z(A_1)$. Particularly, $\epsilon_g$ is unique, for each $g\in G$. We have that $\epsilon_1$ is the multiplicative identity element of both $A_1$ and $A$. Also, $A$ is strongly-graded if and only if $\epsilon_g=1_A,$ for any $g\in G.$ Moreover, by Proposition \ref{epsilon1} (iv),  we have 
	\begin{align}
		\label{eqrep2} A_gA_h=A_gA_{g\m}A_{gh}=\epsilon_gA_1A_{gh}=\epsilon_gA_{gh}
	\end{align}
Similarly, 
\begin{align}\label{eqrep23}
	 A_gA_h=A_{gh}\epsilon_{h\m}
\end{align}	

\end{rem}

In the sequel, we give an example of an epsilon-strongly graded ring that will be useful for our purposes.

\begin{exa}\label{exam}{\bf (Matrix ring)}
	Let $A = \mathlarger{\mathlarger{\oplus}}_{g\in G}\,A_g$ be a $G$-graded ring, $n\in \N$ and $B:=\operatorname{M}_n(A)$ the ring of square matrices of size $n$ with entries in $A$. Observe that $B$ is $G$-graded, where the homogeneous components are given by $B_g=\operatorname{M}_n(A_g)$, for all $g\in G$. Notice that $A$ is epsilon-strongly graded if and only if $B$ is  epsilon-strongly graded. In fact, assume that $A$ is epsilon-strongly graded and let $g\in G$. Then, there exists $\ep_g\in A_gA_{g^{-1}}$ such that $\ep_ga=a=a\ep_{g^{-1}}$, for all $a\in A_g$. Since $\ep_g\in A_gA_{g^{-1}}$, there are $n_g\in \N$, $u_g^{(i)}\in A_g$ and $v_{g^{-1}}^{(i)}\in A_{g\m}$ such that $\ep_g=\sum_{i=1}\limits^{n_g}u_g^{(i)}v_{g^{-1}}^{(i)}$. Denote by $\{e_{ij}\,:\, 1\leq i,j\leq n\}$ the canonical basis of $\operatorname{M}_n(A)$, that is, $e_{ij}$ is the matrix with $1$ in the $(i,j)$-entry and $0$ in the other ones. Then $u_g^{(i)}e_{jj}\in\operatorname{M}_n(A_g)$ and $v_{g^{-1}}^{(i)}e_{jj}\in \operatorname{M}_n(A_{g\m})$. Hence 
	\begin{align*}
		\ep_ge_{jj}=\sum_{i=1}\limits^{n_g}u_g^{(i)}v_{g^{-1}}^{(i)}e_{jj}=\sum_{i=1}\limits^{n_g}u_g^{(i)}e_{jj}v_{g^{-1}}^{(i)}e_{jj}\in \operatorname{M}_n(A_g)\operatorname{M}_n(A_{g\m}),
	\end{align*}
and consequently 
\begin{equation}\label{Ep}E_g:=\ep_gI_n=\ep_ge_{11}+\ldots+\ep_ge_{nn}\in\operatorname{M}_n(A_g)\operatorname{M}_n(A_{g\m}),\end{equation} where	$I_n\in \operatorname{M}_n(A)$ denotes the identity matrix.
It is clear that $E_gX=X=XE_{g^{-1}}$, for all $X\in\operatorname{M}_n(A)$. By Proposition \ref{epsilon1}, $B$ is epsilon-strongly graded.	\vm

Conversely, if $B$ is epsilon-strongly graded then there exists $E_g\in \operatorname{M}_n(A_g)\operatorname{M}_n(A_{g\m})$ such that $E_gX=X=XE_{g^{-1}}$, for all $X\in \operatorname{M}_n(A_g)$ and $g\in G$. For each $a\in A_g$, we have $E_g\cdot ae_{jj}=ae_{jj}$ which implies $E_{g}(j,j)a=a$, where  $E_{g}(j,j)\in A$ is the $(j,j)$-entry of $E_g$.
On the other hand, if $b\in A_{g\m}$ then $be_{jj}\cdot E_g=be_{jj}$ and whence $bE_{g}(j,j)=b$. Thus $E_{g}(j,j)c=c=cE_{g}(j,j)$, for all $c\in A_gA_{g\m}$. Hence $E_g(j,j)$ is an identity element of $A_gA_{g^{-1}}$, for all $j=1,\dots,n$, and it follows that $\ep_g:=E_g(1,1)=\cdots=E_g(n,n)$. Thus, $A$ is epsilon-strongly graded by $G$.
\end{exa}

Let $A=\mathlarger{\mathlarger{\oplus}}_{g\in G}\,A_g$ be a  $G$-graded ring  and denote by $\operatorname{supp}(A)=\{g\in G\,:\,A_g\neq 0\}$ the support of $A$.
The following is clear.

\begin{prop}\label{supp} Let $A=\mathlarger{\mathlarger{\oplus}}_{g\in G}\,A_g$   be a symmetrically graded ring. The following assertions are equivalent: 
	\begin{enumerate}[\rm (i)]
		\item  $g\in \operatorname{supp}(A)$,
		\item  $A_gA_{g\m}\neq 0$,
		\item  $g\m\in\operatorname{supp}(A).$
	\end{enumerate}
\end{prop}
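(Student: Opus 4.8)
The plan is to establish the three equivalences through the cyclic chain (i) $\Rightarrow$ (ii) $\Rightarrow$ (iii) $\Rightarrow$ (i), the only substantive ingredient being the symmetric grading identity \eqref{symg}, namely $A_gA_{g\m}A_g=A_g$ for every $g\in G$. The implication (ii) $\Rightarrow$ (iii) will require nothing more than the elementary fact that a product of additive subgroups vanishes as soon as one of its factors does; the other two implications are exactly where \eqref{symg} is used, to rule out the pathological collapse of the product $A_gA_{g\m}$.

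For (i) $\Rightarrow$ (ii), I would argue by contradiction: if $g\in\operatorname{supp}(A)$ but $A_gA_{g\m}=0$, then \eqref{symg} gives $A_g=A_gA_{g\m}A_g=0$, contradicting $A_g\neq 0$; hence $A_gA_{g\m}\neq 0$. For (ii) $\Rightarrow$ (iii), observe that $A_gA_{g\m}\neq 0$ forces $A_{g\m}\neq 0$, since otherwise the product would be zero, and this is precisely the statement $g\m\in\operatorname{supp}(A)$. Finally, for (iii) $\Rightarrow$ (i), I would apply \eqref{symg} at $g\m$ (using $(g\m)\m=g$), which reads $A_{g\m}A_gA_{g\m}=A_{g\m}$; were $A_g$ zero this would force $A_{g\m}=0$, contradicting $g\m\in\operatorname{supp}(A)$, so $A_g\neq 0$, that is $g\in\operatorname{supp}(A)$.

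There is essentially no obstacle here, the proposition being elementary; the one point worth emphasizing is that for an arbitrary $G$-graded ring the naive implication ``$A_g\neq 0$ and $A_{g\m}\neq 0$ imply $A_gA_{g\m}\neq 0$'' can fail, so it is precisely the hypothesis of symmetric grading that makes the three conditions genuinely equivalent, and it is invoked exactly in the two nontrivial steps above.
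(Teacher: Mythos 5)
Your proof is correct; the paper itself offers no argument (it simply declares the proposition ``clear''), and your cyclic chain via the symmetric grading identity $A_gA_{g\m}A_g=A_g$ is exactly the elementary verification the authors had in mind. The closing remark about why symmetric grading is genuinely needed is a nice touch but not required.
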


We proceed with the next.
\begin{prop}  \label{epsilon2}Let $A=\mathlarger{\mathlarger{\oplus}}_{g\in G}\,A_g$   be a  graded ring. The following statements are equivalent.
	\begin{enumerate}[\rm (i)]
		\item  $A$ is epsilon-strongly graded by $G$,
		\item  $\operatorname{supp}(A)$  is closed under inversion and for every $g \in \operatorname{supp}(A)$ there is a non-zero idempotent $\epsilon_g \in A_g A_{g^{-1}}$
		such that $\epsilon_g a = a = a \epsilon_{g^{-1}}$, for all $a\in A_g$.
	\end{enumerate}
\end{prop}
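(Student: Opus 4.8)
The plan is to derive both implications from the already-established characterization in Proposition \ref{epsilon1}, together with Remark \ref{rem-epsilon} and Proposition \ref{supp}. The crucial observation is that condition (ii) differs from Proposition \ref{epsilon1}(iii) only in that it restricts attention to $g\in\operatorname{supp}(A)$ (rather than all of $G$) while adding the requirements that $\operatorname{supp}(A)$ be inversion-closed and that each $\epsilon_g$ be a non-zero idempotent. So the task reduces to matching these two formulations.

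First I would prove (i)$\Rightarrow$(ii). Assuming $A$ is epsilon-strongly graded, Proposition \ref{epsilon1}(iii) supplies, for every $g\in G$, an element $\epsilon_g\in A_gA_{g^{-1}}$ with $\epsilon_g a=a=a\epsilon_{g^{-1}}$ for all $a\in A_g$. By Remark \ref{rem-epsilon}, this $\epsilon_g$ is the multiplicative identity of the ideal $A_gA_{g^{-1}}$, hence in particular an idempotent. For the inversion-closure of $\operatorname{supp}(A)$, note that epsilon-strongly graded implies symmetrically graded by Proposition \ref{epsilon1}(ii), so Proposition \ref{supp} applies and gives $g\in\operatorname{supp}(A)\Leftrightarrow g^{-1}\in\operatorname{supp}(A)$. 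Finally, for $g\in\operatorname{supp}(A)$ the same proposition yields $A_gA_{g^{-1}}\neq 0$; since $\epsilon_g$ is the identity of this nonzero ideal, $\epsilon_g=0$ would force $x=\epsilon_g x=0$ for every $x\in A_gA_{g^{-1}}$, a contradiction. Thus $\epsilon_g\neq 0$, establishing (ii).

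For (ii)$\Rightarrow$(i), I would recover the full statement of Proposition \ref{epsilon1}(iii) by treating $g\in\operatorname{supp}(A)$ and $g\notin\operatorname{supp}(A)$ separately. For $g\in\operatorname{supp}(A)$, hypothesis (ii) directly provides $\epsilon_g\in A_gA_{g^{-1}}$ with $\epsilon_g a=a=a\epsilon_{g^{-1}}$ for all $a\in A_g$; here the inversion-closure of $\operatorname{supp}(A)$ guarantees that $g^{-1}\in\operatorname{supp}(A)$, so the right identity $\epsilon_{g^{-1}}$ is among the elements furnished by (ii). For $g\notin\operatorname{supp}(A)$ we have $A_g=0$, whence $A_gA_{g^{-1}}=0$; setting $\epsilon_g:=0\in A_gA_{g^{-1}}$, the identity $\epsilon_g a=a=a\epsilon_{g^{-1}}$ holds vacuously since $a=0$ is the only element of $A_g$. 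Hence the condition of Proposition \ref{epsilon1}(iii) holds for every $g\in G$, and $A$ is epsilon-strongly graded.

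The only genuinely delicate point is the bookkeeping in (ii)$\Rightarrow$(i): one must verify that restricting the existence of $\epsilon_g$ to $g\in\operatorname{supp}(A)$ loses no information, which is exactly why the inversion-closure hypothesis is needed (to ensure $\epsilon_{g^{-1}}$ is available whenever $\epsilon_g$ is) and why the vanishing components $A_g=0$ can be handled by the trivial choice $\epsilon_g=0$. Everything else is a direct invocation of Proposition \ref{epsilon1}, Proposition \ref{supp}, and Remark \ref{rem-epsilon}, so I do not anticipate any computational obstacle.
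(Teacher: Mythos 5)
Your proposal is correct and follows essentially the same route as the paper: both directions reduce to Proposition \ref{epsilon1}(iii) via Proposition \ref{supp} for inversion-closure, with the trivial choice $\epsilon_g=0$ for $g\notin\operatorname{supp}(A)$ in the converse. Your extra remarks (invoking Remark \ref{rem-epsilon} for idempotency and spelling out why $\epsilon_g\neq 0$ on the support) only make explicit what the paper leaves implicit.
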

\begin{proof}
	\noindent (i) $\Rightarrow$ (ii) Assume that $A$ is epsilon-strongly graded by $G$. By Proposition \ref{epsilon1}, $A$ is symmetrically graded and Proposition \ref{supp} implies that  $\operatorname{supp}(A)$  is closed under inversion. Also, for $g\in \operatorname{supp}(A)$, we have that $A_gA_{g\m}\neq 0$ and whence by  Proposition \ref{epsilon1} there exists an idempotent $0\neq \epsilon_g\in A_{g}A_{g\m}$ that satisfies $\epsilon_g a = a = a \epsilon_{g^{-1}}$, for all $a\in A_g$. 
	
	\noindent (ii) $\Rightarrow$ (i) Let $g\in G$.  If $g\notin \operatorname{supp}(A)$ then  $g\m\notin \operatorname{supp}(A)$. In this case, we define $\epsilon_g=\epsilon_{g\m}=0$.  Since $A_g={0}$, it follows that $ \epsilon_g a=a=a\epsilon_{g\m}$, for all $a\in A_g$. If $g\in \operatorname{supp}(A)$, then there exists a non-zero idempotent $\epsilon_{g}\in A_{g}A_{g^{-1}}$ that satisfies $ \epsilon_g a=a=a\epsilon_{g\m}$, for all $a\in A_g$. From Proposition \ref{epsilon1} (iii) follows that  $A$ is epsilon-strongly graded by $G$.
\end{proof}

Suppose that $A=\mathlarger{\mathlarger{\oplus}}_{g\in G}\,A_g$  is a $G$-graded ring. Recall that a left $A$-module $M$  is  {\it  $G$-graded} if $M=\mathlarger{\mathlarger{\oplus}}_{g\in G}\,M_g$, where $M_g$ is an additive subgroup of $M$, and $A_gM_h\subseteq M_{gh}$, for all $g,h\in G$.  Also, a submodule $N$ of a graded module $M$ is a {\it graded submodule} if $N=\mathlarger{\mathlarger{\oplus}}_{g\in G}\,(N \cap M_g).$ A module morphism $\psi:M\to M'$ between two left $G$-graded modules $M=\mathlarger{\mathlarger{\oplus}}_{g\in G}\,M_g$ and $M'=\mathlarger{\mathlarger{\oplus}}_{g\in G}\,M'_g$ is {\it graded} provided that  $\psi(M_g)\subseteq M'_g$, for all  $g\in G$. We denote by $\operatorname{A-gr}$ the category whose objects are the left graded $A$-modules and the morphisms are the graded module morphisms.\smallbreak

Now we consider a lemma that will be useful later.

\begin{lem}\label{isomul} 
	 Let $A=\mathlarger{\mathlarger{\oplus}}_{g\in G}\,A_g$ be an epsilon-strongly $G$-graded ring,  $R=A_1$  and  $M=\mathlarger{\mathlarger{\oplus}}_{g\in G}\,M_g\in \operatorname{(A,A)-gr}$. Then, for all $g\in G$,
	\begin{enumerate}[\rm (i)]
		\item the multiplication maps $\mu_g\colon  A_g\otimes_{R} M_1 \to\epsilon_g M_g$ and  $\tilde{\mu}_g\colon  A_{g\m}\otimes_{R} M_g \to\epsilon_{g\m}M_1$ are $(R,R)$-bimodule isomorphisms,
        \item  The sextuple $(R\epsilon_{g}, R\epsilon_{g\m}, A_g, A_{g\m},m_{g,g\m}, m_{g\m,g} )$ is a strict Morita context, where $m_{g,g\m}\colon A_g\otimes_{R\epsilon_{g\m}} A_{g\m}\to \epsilon_gR$ and $m_{g\m,g}\colon A_{g\m}\otimes_{R\epsilon_g} A_{g}\to \epsilon_{g\m}R$ are the multiplication maps.
	\end{enumerate}
\end{lem}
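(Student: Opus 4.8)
The plan is to prove both statements by exploiting the epsilon-strongly graded structure, specifically the idempotents $\epsilon_g$ and the defining relation $A_gA_{g^{-1}}A_g = A_g$ from Proposition \ref{epsilon1}. I will first establish (i) and then observe that (ii) follows by packaging the same isomorphisms into a Morita context. For part (i), consider the multiplication map $\mu_g\colon A_g\otimes_R M_1 \to \epsilon_g M_g$, $a\otimes m\mapsto am$. First I would check this is well-defined and lands in $\epsilon_g M_g$: since $a\in A_g$ and $\epsilon_g$ is the identity of $A_gA_{g^{-1}}$ with $\epsilon_g a = a$, we get $am = \epsilon_g a m \in \epsilon_g M_g$. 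The map is clearly a morphism of $(R,R)$-bimodules once one verifies the left $R$-action is respected (immediate, since $R=A_1$ acts on the left of $A_g$) and the right $R$-action passes through the tensor balancing.

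The heart of the argument is constructing a two-sided inverse. Here I would use the strict Morita context machinery that underlies the epsilon-strongly graded condition: because $\epsilon_g \in A_gA_{g^{-1}}$, write $\epsilon_g = \sum_i u_g^{(i)} v_{g^{-1}}^{(i)}$ with $u_g^{(i)}\in A_g$, $v_{g^{-1}}^{(i)}\in A_{g^{-1}}$. The candidate inverse $\nu_g\colon \epsilon_g M_g \to A_g\otimes_R M_1$ is given by $\epsilon_g m \mapsto \sum_i u_g^{(i)}\otimes v_{g^{-1}}^{(i)} m$, noting $v_{g^{-1}}^{(i)} m \in A_{g^{-1}} M_g \subseteq M_1$. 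I would check $\mu_g\circ\nu_g = \mathrm{id}$ using $\sum_i u_g^{(i)} v_{g^{-1}}^{(i)} m = \epsilon_g m = m$ on $\epsilon_g M_g$. The reverse composite requires showing $\sum_i u_g^{(i)}\otimes v_{g^{-1}}^{(i)} a m = a\otimes m$ for $a\in A_g$, $m\in M_1$; this uses that $\sum_i u_g^{(i)} v_{g^{-1}}^{(i)} a = \epsilon_g a = a$ together with the tensor relation over $R$, sliding the element $v_{g^{-1}}^{(i)} a \in A_{g^{-1}}A_g \subseteq R$ across the tensor symbol. The analogous statement for $\tilde\mu_g$ follows by replacing $g$ with $g^{-1}$ throughout and applying the symmetric relation $\epsilon_{g^{-1}} a = a$ for $a\in A_{g^{-1}}$.

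For part (ii), the data of a strict Morita context between the rings $R\epsilon_g$ and $R\epsilon_{g^{-1}}$ consists of the bimodules $A_g$ and $A_{g^{-1}}$ together with the two multiplication pairings $m_{g,g^{-1}}$ and $m_{g^{-1},g}$. I would first record that $A_g$ is an $(R\epsilon_g, R\epsilon_{g^{-1}})$-bimodule and $A_{g^{-1}}$ an $(R\epsilon_{g^{-1}}, R\epsilon_g)$-bimodule, which follows from the bimodule structure in Definition \ref{definitionepsilon} once one identifies $A_gA_{g^{-1}} = R\epsilon_g$ and $A_{g^{-1}}A_g = R\epsilon_{g^{-1}}$ (using that $\epsilon_g$ is the identity of the ideal $A_gA_{g^{-1}}$). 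The associativity and compatibility axioms of a Morita context reduce to the associativity of multiplication in $A$. The pairings are surjective precisely because $A_gA_{g^{-1}} = R\epsilon_g$ and $A_{g^{-1}}A_g = R\epsilon_{g^{-1}}$ are the full unital ideals; strictness (that the pairings are in fact isomorphisms after tensoring) is exactly the content established by the maps in (i), now read with $M = A$ so that $M_1 = R$, $M_g = A_g$, giving $m_{g,g^{-1}} = \mu_g$-type surjectivity and the inverse from the same $\epsilon_g$-decomposition.

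I expect the main obstacle to be the careful bookkeeping in the reverse composite $\nu_g\circ\mu_g = \mathrm{id}$, where one must justify moving the factor $v_{g^{-1}}^{(i)} a \in R$ across the tensor product over $R$; this is where the choice of the correct balancing ring and the relation $A_{g^{-1}}A_g\subseteq R$ is essential, and it is easy to conflate the left and right $R$-actions. The rest is a routine verification that the multiplication maps respect the bimodule structures, which I would state without belaboring the computations.
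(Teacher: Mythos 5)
Your argument is correct, but it is worth noting that the paper does not actually prove this lemma: it simply cites Theorem 3.4(iii) of \cite{MPS} for item (i) and Proposition 48(d) of \cite{NYOP3} for item (ii). Your direct construction --- writing $\epsilon_g=\sum_i u_g^{(i)}v_{g^{-1}}^{(i)}$ with $u_g^{(i)}\in A_g$, $v_{g^{-1}}^{(i)}\in A_{g^{-1}}$ and using $\nu_g(x)=\sum_i u_g^{(i)}\otimes v_{g^{-1}}^{(i)}x$ as the two-sided inverse, sliding $v_{g^{-1}}^{(i)}a\in A_{g^{-1}}A_g\subseteq R$ across $\otimes_R$ --- is exactly the standard computation that those citations encapsulate, so your proposal buys a self-contained proof where the paper buys brevity. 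Two small points deserve care in a write-up. First, $\tilde\mu_g$ is \emph{not} literally the $g\mapsto g^{-1}$ substitution of $\mu_g$ (that substitution would produce $A_{g^{-1}}\otimes_R M_1\to\epsilon_{g^{-1}}M_{g^{-1}}$, whereas $\tilde\mu_g$ has source $A_{g^{-1}}\otimes_R M_g$ and target $\epsilon_{g^{-1}}M_1$); one runs the mirror argument using a decomposition of $\epsilon_{g^{-1}}\in A_{g^{-1}}A_g$ instead. Second, in (ii) the Morita pairings are balanced over $R\epsilon_{g^{\mp 1}}$ while the maps of (i) (read with $M=A$, so that $\tilde\mu_{g^{-1}}\colon A_g\otimes_R A_{g^{-1}}\to\epsilon_gR$) are balanced over $R$; these tensor products coincide because $a\epsilon_{g^{-1}}=a$ for $a\in A_g$ and $\epsilon_{g^{-1}}b=b$ for $b\in A_{g^{-1}}$, so every $R$-balancing relation already follows from an $R\epsilon_{g^{-1}}$-balancing relation, but this identification should be stated rather than left implicit. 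Neither point is a gap, only bookkeeping you have essentially anticipated.
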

\begin{proof}
The item (i) is a consequence of Theorem 3.4 (iii) of \cite{MPS} while (ii) follows from Proposition 48 (d) of \cite{NYOP3}.
\end{proof}
\section{The Picard semigroup}\label{pics}

Throughout this section, $A$ will be a ring. Let $M$ be an $A$-bimodule. The left $A$-module $M$ will be denoted by $_AM$. Similarly, $M_A$ denotes the right $A$-module $M$. If $A=\mathlarger{\mathlarger{\oplus}}_{g\in G}\,A_g$ is an epsilon-strongly graded ring and $R=A_1$ then we will show that the association $g\mapsto [A_g]$ is a unital partial representation of $G$ on the Picard semigroup $\pic{R}$. Consequently we have a partial action $\gamma$ of $G$ on the center $Z(R)$ of $R$. When $R$ is commutative, we will determine sufficient conditions for the $R^{\gamma}$-algebra $A$ to be Azumaya.

\subsection{The Picard semigroup}\label{subs-picard-semi}
Consider $A$-bimodules $M$ and $N$. The  sets $\operatorname{Mor}(M_A,N_A)$ of right $A$-morphisms and  $\operatorname{Mor}(_AM,\,_AN)$ of left  $A$-morphisms are $A$-bimodules with structures given by: for all $m\in M$ and $a\in A$,
\begin{align*}
&(af)(m) = af(m),& &(fa)(m) = f(am),& &f\in \operatorname{Mor}(M_A,N_A),&	\\
&(af')(m) = f'(ma),& &(f'a)(m) = f'(m)a,& &f'\in \operatorname{Mor}(_AM,_AN).&	
\end{align*}
We will denote $M^*=\operatorname{Mor}(_AM,\, _AA)$ and $^*M=\operatorname{Mor}(M_A, A_A)$. We consider the following $A$-bimodule morphisms:
\begin{align*}
&l:A\to \operatorname{End}(M_A),& &l(a)(m)=l_a(m)=am,& &a\in A,\,m\in M,&\\	
&r:A\to \operatorname{End}(_AM),& &r(a)(m)=r_a(m)=ma,& &a\in A,\,m\in M.&	
\end{align*}

In the sequel we recall the notion of partially invertible bimodule given in Section 2.2 of \cite{DDR}. We warn the reader that the notion of partially invertible bimodule has been used in a related sense in  Definition 27 of \cite{NYOP3}.

\begin{defi}\label{defpics} An $A$-bimodule $P$ is said {\it partially invertible} if satisfies:
	\begin{enumerate}[\quad\rm (i)]
		\item $P$ is finitely generated and projective as left and right $A$-module and,\vm
		\item $l$ and $r$ are epimorphisms of $A$-bimodules.
	\end{enumerate}
\end{defi}

Following \cite{DDR}, $\pic{A}$ denotes the set of the isomorphism classes $[P]$ of partially invertible $A$-bimodules, that is,
$$\pic{A}=\big\{[P],\:\, P \, \text {is a 
	partially invertible }\,A\text{-bimodule}  \big\}.$$
The next is Proposition 2.4 of \cite{DDR}.

\begin{prop} $\pic{A}$ is a monoid with multiplication defined by 
	\[ [P][Q] = [P \otimes_A Q], \quad\text{for all } \,[P],[Q]\in \pic{A}.\]
\end{prop}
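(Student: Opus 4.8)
The plan is to verify the four monoid axioms for $(\pic{A},\otimes_A)$: that the operation is well defined on isomorphism classes, that it is closed (the tensor product of two partially invertible bimodules is again partially invertible), that it is associative, and that $[A]$ is a two-sided identity. Three of these are essentially formal, so I would dispatch them first. Well-definedness follows from the functoriality of $\otimes_A$: a pair of $A$-bimodule isomorphisms $P\cong P'$ and $Q\cong Q'$ induces an $A$-bimodule isomorphism $P\otimes_A Q\cong P'\otimes_A Q'$. Associativity is the canonical bimodule isomorphism $(P\otimes_A Q)\otimes_A S\cong P\otimes_A(Q\otimes_A S)$. For the identity, one first checks that $A$ itself is partially invertible: it is free of rank one on each side, hence finitely generated projective as left and as right $A$-module, and the structure maps $l\colon A\to\operatorname{End}(A_A)$ and $r\colon A\to\operatorname{End}({}_AA)$ are isomorphisms, so in particular epimorphisms; the unit constraints $A\otimes_A P\cong P\cong P\otimes_A A$ then give $[A][P]=[P]=[P][A]$. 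Thus the whole content reduces to the closure statement, which I would split into a projectivity part and an epimorphism part.

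For the projectivity part I would argue by passing to direct summands. Since $Q$ is finitely generated projective as a left $A$-module, it is a direct summand of some ${}_A A^m$ via left $A$-linear maps $s\colon Q\to A^m$ and $t\colon A^m\to Q$ with $ts=\id_Q$. Applying the functor $P\otimes_A(-)$ (which uses the right structure of $P$ and is well defined on morphisms of left $A$-modules) yields $\id_P\otimes t$ and $\id_P\otimes s$ exhibiting $P\otimes_A Q$ as a direct summand of $P\otimes_A A^m\cong P^m$ as left $A$-modules. As $P$ is finitely generated projective as a left $A$-module, so is $P^m$, hence so is its direct summand $P\otimes_A Q$. The symmetric argument, using that $P$ is a summand of $A^n_A$ and that $Q$ is finitely generated projective as a right module, shows that $P\otimes_A Q$ is finitely generated projective as a right $A$-module.

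The epimorphism part is the crux, and I would treat $l_{P\otimes_A Q}$ in detail (the argument for $r_{P\otimes_A Q}$ being symmetric, with right duals replaced by left duals $M^*=\operatorname{Mor}({}_AM,{}_AA)$ and the hypotheses on $l$ by those on $r$). Choose dual bases $(p_i,\pi_i)$ for $P_A$ and $(q_j,\rho_j)$ for $Q_A$ with $\pi_i\in{}^*P$ and $\rho_j\in{}^*Q$. For each index $i$ set the right $A$-linear maps $\iota_i\colon Q\to P\otimes_A Q$, $q\mapsto p_i\otimes q$, and $\varpi_i\colon P\otimes_A Q\to Q$, $p\otimes q\mapsto\pi_i(p)q$ (the latter is well defined on the tensor product because $\pi_i$ is right $A$-linear), so that $\sum_i\iota_i\varpi_i=\id_{P\otimes_A Q}$. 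Given $\Theta\in\operatorname{End}((P\otimes_A Q)_A)$, the maps $\Theta_{ki}:=\varpi_k\circ\Theta\circ\iota_i$ lie in $\operatorname{End}(Q_A)$ and satisfy $\Theta=\sum_{k,i}\iota_k\,\Theta_{ki}\,\varpi_i$. Using that $l_Q$ is an epimorphism I would write $\Theta_{ki}=l_Q(b_{ki})$, i.e. $\Theta_{ki}(q)=b_{ki}q$ with $b_{ki}\in A$; substituting and moving each element $b_{ki}\pi_i(p)\in A$ across the tensor sign gives
\[
\Theta(p\otimes q)=\sum_{k,i}p_k\otimes b_{ki}\pi_i(p)q=\Big(\sum_{k,i}p_k\,b_{ki}\,\pi_i(p)\Big)\otimes q=\phi(p)\otimes q,
\]
where $\phi(p):=\sum_{k,i}p_k\,b_{ki}\,\pi_i(p)$ defines an element $\phi\in\operatorname{End}(P_A)$. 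Finally, since $l_P$ is an epimorphism, $\phi=l_P(a)$ for some $a\in A$, whence $\Theta(p\otimes q)=ap\otimes q=a\,(p\otimes q)$ and $\Theta=l_{P\otimes_A Q}(a)$, proving surjectivity.

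I expect the main obstacle to be the bookkeeping of the one-sided module structures: one must ensure that each dual basis, each of the maps $\iota_i,\varpi_i$, and each invocation of the epimorphism hypotheses is applied on the correct side, since the left structure of $P\otimes_A Q$ comes from $P$ while the tensor is formed over the right structure of $P$ and the left structure of $Q$. The decisive conceptual point is that both hypotheses are genuinely used: the surjectivity of $l_Q$ is what allows one to replace the inner endomorphisms $\Theta_{ki}$ by honest left multiplications by elements of $A$ (so that they may be pushed across $\otimes_A$), and the surjectivity of $l_P$ then collapses the resulting operator $\phi\otimes\id_Q$ to a single left multiplication. With closure established, the monoid structure of $\pic{A}$ follows.
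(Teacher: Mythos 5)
Your proof is correct. Note first that the paper itself does not prove this statement: it simply records it as Proposition 2.4 of the cited reference [DDR], so there is no in-paper argument to compare yours against; what you have written is a self-contained substitute for that citation. The routine parts (well-definedness, associativity, the identity $[A]$, and the summand argument for finite generation and projectivity of $P\otimes_A Q$ on each side) are all handled properly, and the one genuinely delicate point --- surjectivity of $l_{P\otimes_A Q}$ --- is done correctly: the maps $\iota_i$ and $\varpi_i$ built from a dual basis of $P_A$ are right $A$-linear, the decomposition $\Theta=\sum_{k,i}\iota_k\Theta_{ki}\varpi_i$ is valid, and the two-stage use of the hypotheses (first $l_Q$ surjective to replace each $\Theta_{ki}$ by left multiplication by $b_{ki}\in A$, which can then be slid across the tensor sign; then $l_P$ surjective to collapse $\phi$ to a single left multiplication) is exactly the mechanism that makes closure work. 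The side-bookkeeping you flag as the main hazard is in fact kept straight throughout, and the symmetric argument for $r_{P\otimes_A Q}$ goes through with left duals as you indicate. The only cosmetic remark is that you invoke ``epimorphism of $A$-bimodules'' as meaning surjective, which is of course valid in a module category but worth saying once explicitly.
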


If $A$ is commutative then the semigroup   $\pic{A}$  consists of the isomorphism classes of finitely generated projective central $A$-bimodules of rank less or equal than one; see Section 3 of \cite{DPP}. 
The following is an immediate consequence of Definition \ref{defpics}.

\begin{lem}\label{principal-unital-ideal}
Let $I$ be a unital ideal of $A$. Then $[I]\in \pic{A}$.
\end{lem}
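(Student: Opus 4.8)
The plan is to use the fact that a unital ideal is generated by a central idempotent, and then verify the two conditions of Definition~\ref{defpics} directly.

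First I would record the structure of $I$. Since $I$ is a unital ideal of $A$, it has a multiplicative identity $e$. For every $a\in A$ the elements $ea$ and $ae$ lie in $I$, so $(ea)e=ea$ and $e(ae)=ae$, which together with $ea,ae\in I$ give $eae=ea$ and $eae=ae$; hence $e$ is a central idempotent of $A$ and $I=eA=Ae$. Because $e$ is central, $A=eA\oplus(1-e)A$ as right $A$-modules and $A=Ae\oplus A(1-e)$ as left $A$-modules, so $I$ is a direct summand of the free module $A$ on each side. This yields condition (i) of Definition~\ref{defpics}: $I$ is finitely generated and projective as a left and as a right $A$-module.

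It then remains to check condition (ii), that $l\colon A\to\operatorname{End}(I_A)$ and $r\colon A\to\operatorname{End}({}_AI)$ are epimorphisms; this is the only step that requires a genuine argument. I would treat $l$ and invoke the symmetric version for $r$. Let $f\in\operatorname{End}(I_A)$. Since $I=eA$ is generated as a right $A$-module by $e$, the endomorphism $f$ is determined by $f(e)$, and right $A$-linearity applied to $e=e\cdot e$ forces $f(e)=f(e)e$; as $f(e)\in I=eA$ this gives $b:=f(e)\in eAe$, so in particular $be=b$. Then for every $ea\in I$ one computes $l(b)(ea)=b(ea)=(be)a=ba=f(e)a=f(ea)$, whence $f=l(b)$ and $l$ is surjective. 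The symmetric computation, using $I=Ae$ and $b':=g(e)\in eAe$ for $g\in\operatorname{End}({}_AI)$, shows $g=r(b')$, so $r$ is surjective as well.

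Combining these points, $I$ satisfies both conditions of Definition~\ref{defpics}, so $I$ is a partially invertible $A$-bimodule and $[I]\in\pic{A}$. The only delicate part is the surjectivity of $l$ and $r$, and everything there hinges on the two observations $f(e)\in eAe$ and $be=b$, which allow an abstract one-sided endomorphism of $I$ to be rewritten as multiplication by a single element of $A$.
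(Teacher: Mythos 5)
Your proof is correct and follows exactly the route the paper intends: the paper gives no argument at all, stating only that the lemma is ``an immediate consequence of Definition~\ref{defpics},'' and your write-up is precisely the direct verification of that definition (centrality of the unit $e$, projectivity via the splitting $A=eA\oplus(1-e)A$, and surjectivity of $l$ and $r$ by evaluating endomorphisms at $e$). Nothing is missing.
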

Denote by $I(A)$ the set of ring isomorphisms between ideals of $A$. Note that $I(A)$ is an inverse semigroup with composition given by: for ideals $I,J,K,L$ of $A$ and isomorphisms $\varphi:I\to J$ and $\psi:K\to L$, we consider the isomorphism $\psi\varphi: \varphi\m(J\cap K)\to \psi(J\cap K)$. 

Let $M$ be an $A$-bimodule and $\theta:A1_{\theta\m}\to A1_{\theta}$ in $I(A)$, where $1_{\theta\m}$ and $1_{\theta}$ are central idempotents of $A$. If $m1_\theta=m$, for all $m\in M$, then $\theta$ induces a structure of right $A$-module on $M_{\theta}:=M$  via 
\begin{align}\label{left-action}
	m\cdot a=m\theta(a1_{\theta\m}),
\end{align}
where the concatenation $m\theta(a1_{\theta\m})$ means the right action in $M_A$ of $\theta(a1_{\theta\m})$ on $m$, for all $a\in A$ and $m\in M$. It is clear that $_{\id}M_{\theta}:=M$ is an $A$-bimodule, where the structure of left $A$-module on $_{\id}M_{\theta}$ is the same of $M$. Analogously one defines the $A$-bimodule  $_{\theta}M_{\id}.$

\begin{prop}\label{1pic} The following statements hold.
	\begin{enumerate}[\rm (i)]

\item If $[P]$ in $\pic{A}$ then:\vm 
		\begin{enumerate}[\rm (a)]
			\item  there exists a central idempotent $e_1\in A$ such that $\operatorname{Ann}(P_A)=Ae_1$ and the map $r:A(1-e_1)\to \operatorname{End}(_AP)$ is an isomorphism of $A$-bimodules,
			\item there exists a central idempotent $e_2\in A$   such that $\operatorname{Ann}(_AP)=Ae_2$ and the map $l:A(1-e_2)\to \operatorname{End}(P_A)$ is an isomorphism of $A$-bimodules. 
		\end{enumerate}
		\item Let $P$ be an $A$-bimodule. If there are an $A$-bimodule $Q$, unital  ideals  $I,J$ of $A$ and morphisms of bimodules $\tau:P\otimes_J Q\to I$ and $\mu:Q\otimes_I P\to J$
		such that $(I, J, P, Q ,\tau,\mu)$ is a strict Morita context then $[P]\in \pic{A}$.\vspace{.05cm}
		\item Let $[P], [Q]\in \pic{A}$. Then $_AP\simeq\! _AQ$ if and only if there is $\theta\in I(A)$ such that $\operatorname{dom}(\theta)$ and $\operatorname{im}(\theta)$ are generated by central idempotents of $A$ and $Q\simeq\!\, _{\id}P_\theta$ as $A$-bimodules.\vspace{.05cm}
		\item Let $\theta:A1_{\theta\m}\to A1_{\theta}$ be an element in $I(A)$ and $M:=A1_{\theta}$, where $1_{\theta\m}$ and $1_{\theta}$ are central idempotents of $A$. Then $[_{\id}M_\theta], [_\theta M_{\id}]\in \pic{A}$.\vspace{.05cm}
		
		\item Assume that $A=\mathlarger{\mathlarger{\oplus}}_{g\in G}\,A_g$ is an epsilon-strongly graded ring and $R=A_1$. Then $[A_g]\in \pic{R}$, for all $g\in G$.
	\end{enumerate}
\end{prop}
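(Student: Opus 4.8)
I will treat the five items in the order (ii), then (v) and (iv), then (i), and finally (iii), since (ii) is the engine behind (v) and (iv) while (iii) rests on (i).

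\emph{Item (ii).} This is classical Morita theory transported to unital corners. The ideals $I,J$ are unital, so their identities $1_I,1_J$ are central idempotents of $A$ and $I=A1_I$, $J=A1_J$ are ring direct factors. Being a \emph{strict} context, $\tau$ and $\mu$ are bijective; by the Morita theorem $P$ is then a progenerator on both sides, hence finitely generated projective as a right $J$-module and as a left $I$-module, with $\mu,\tau$ implementing ring isomorphisms $J\cong\operatorname{End}({}_IP)$ and $I\cong\operatorname{End}(P_J)$ by right and left multiplication. Since $1_IP=P=P1_J$ while $A(1-1_I)P=0=PA(1-1_J)$, finite generation and projectivity over the direct factors $I,J$ promote to the same over $A$ on both sides, and $\operatorname{End}({}_AP)=\operatorname{End}({}_IP)$, $\operatorname{End}(P_A)=\operatorname{End}(P_J)$. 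The maps $r,l$ of $\S\,$\ref{subs-picard-semi} restrict on $J$ and $I$ to these Morita isomorphisms, so they are surjective; by Definition \ref{defpics}, $[P]\in\pic{A}$.

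\emph{Items (v) and (iv).} For (v) I feed Lemma \ref{isomul}(ii) into (ii): taking the ambient ring to be $R$, the sextuple $(R\epsilon_g,R\epsilon_{g^{-1}},A_g,A_{g^{-1}},m_{g,g^{-1}},m_{g^{-1},g})$ is a strict Morita context whose two ideals are unital, with identities the central idempotents $\epsilon_g,\epsilon_{g^{-1}}$ of Remark \ref{rem-epsilon}; hence $[A_g]\in\pic{R}$. For (iv) I argue directly from Definition \ref{defpics}. As a left module ${}_{\id}M_\theta={}_A(A1_\theta)$ is a direct summand of ${}_AA$, hence finitely generated projective; as a right module the ring isomorphism $\theta^{-1}$ identifies $M_\theta$ with the right ideal $A1_{\theta^{-1}}$, again a summand of $A_A$. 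Using that $\theta$ is onto $A1_\theta$ one checks that $r$ realizes every right multiplication by an element of $A1_\theta$ and $l$ every left multiplication by an element of $A1_{\theta^{-1}}$, so both are surjective; thus $[{}_{\id}M_\theta]\in\pic{A}$, and $[{}_\theta M_{\id}]$ is symmetric. (Alternatively (iv) follows from (ii) with an explicit context built from $\theta$.)

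\emph{Item (i) --- the crux.} I treat (a); part (b) is symmetric with $l$ in place of $r$. The map $r\colon A\to\operatorname{End}({}_AP)$ is a surjective morphism of $A$-bimodules with $\ker r=\operatorname{Ann}(P_A)$, a two-sided ideal. Using the finite projectivity of $P$ on both sides, $\operatorname{End}({}_AP)$ is finitely generated projective as a left and as a right $A$-module (for instance via the natural identification $\operatorname{End}({}_AP)\cong P\otimes_A P^{*}$). Consequently the surjection $r$ splits both as a morphism of left $A$-modules and as a morphism of right $A$-modules, so $\operatorname{Ann}(P_A)$ is simultaneously a direct summand of $A$ as a left ideal and as a right ideal. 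An elementary lemma then applies: a two-sided ideal that is a direct summand on each side is generated by a central idempotent; writing its complement as $1-e_1$ gives $\operatorname{Ann}(P_A)=Ae_1$. Restricting $r$ to $A(1-e_1)$ kills nothing and stays onto, yielding the asserted bimodule isomorphism $A(1-e_1)\cong\operatorname{End}({}_AP)$. I expect the careful bookkeeping of left versus right projectivity of $\operatorname{End}({}_AP)$ --- equivalently, that the relevant trace ideal possesses an identity element --- to be the only delicate point of the whole proposition.

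\emph{Item (iii).} The ``if'' direction is immediate, since the left $A$-module underlying ${}_{\id}P_\theta$ is just ${}_AP$. For ``only if'', start from a left $A$-isomorphism $\sigma\colon{}_AP\to{}_AQ$ and conjugate to obtain a ring isomorphism $\Sigma\colon\operatorname{End}({}_AP)\to\operatorname{End}({}_AQ)$, $\phi\mapsto\sigma\phi\sigma^{-1}$. By (i)(a) the maps $r_P,r_Q$ identify $\operatorname{End}({}_AP)$ and $\operatorname{End}({}_AQ)$ with the ideals $A(1-e_1^{P})$ and $A(1-e_1^{Q})$ generated by central idempotents. Transporting $\Sigma$ through these identifications yields $\theta:=(r_P)^{-1}\circ\Sigma^{-1}\circ r_Q\in I(A)$, an isomorphism $A(1-e_1^{Q})\to A(1-e_1^{P})$ whose domain and image are generated by central idempotents. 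A direct check --- using $Qe_1^{Q}=0$ to replace $a$ by $a(1-e_1^{Q})$ --- shows that $\sigma$ is right $\theta$-linear, i.e. that $\sigma\colon{}_{\id}P_\theta\to Q$ is an isomorphism of $A$-bimodules, as required.
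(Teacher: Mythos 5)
Your proposal is correct and, for items (ii), (iii) and (v), follows the paper's own route almost verbatim: (ii) via classical Morita theory on the unital corners (the paper cites Jacobson for $l\colon I\to\operatorname{End}(P_I)$ being an isomorphism and proves $\operatorname{End}(P_I)=\operatorname{End}(P_A)$ exactly as you do), (v) by feeding Lemma~\ref{isomul}(ii) into (ii), and (iii) by conjugating endomorphism rings and transporting through the isomorphisms of (i) --- your $\theta=(r_P)^{-1}\circ\Sigma^{-1}\circ r_Q$ is precisely the paper's elementwise definition $\theta(t_Q)=t_P$ where $r_{t_P}=h^*(r_{t_Q})$, and your final $\theta$-linearity check is its identity \eqref{hcond}. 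Two local differences deserve note. In (i) the paper disposes of the key step in one line (``the sequence splits because $\operatorname{End}({}_AP)$ is projective''), which taken literally would require projectivity as a bimodule; your version --- split the surjection $r$ separately as a map of left and of right $A$-modules using one-sided projectivity of $\operatorname{End}({}_AP)$, then invoke the elementary fact that a two-sided ideal which is a direct summand on each side has a two-sided identity, necessarily a central idempotent --- is the honest filling of that gap and the same underlying idea (one small slip: the natural identification for left endomorphisms reads $\operatorname{End}({}_AP)\simeq P^*\otimes_A P$, not $P\otimes_A P^*$; the paper itself uses this form in the proof of Lemma~\ref{ege}, and the conclusion is unaffected). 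In (iv) you genuinely diverge: the paper builds an explicit dual basis $\{m_i,\varphi_i\}$ with $\varphi_i(m)=\theta\m(f_i(m)1_\theta)$ for $M_\theta$ and then proves $\operatorname{End}(M_\theta)=\operatorname{End}(M_A)$ before applying Lemma~\ref{principal-unital-ideal}, whereas you transport the entire twisted right structure through the ring isomorphism $\theta\m$ onto the right ideal $A1_{\theta\m}$, a direct summand of $A_A$. Your route is shorter and makes both the projectivity of $M_\theta$ and the surjectivity of $l$ and $r$ transparent (your description of the endomorphisms reached by $l$ as left multiplications by elements of $A1_{\theta\m}$ is correct when read through that identification); the paper's dual-basis computation proves the same facts by hand. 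Both arguments are complete, so this is a cosmetic improvement rather than a correction.
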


\begin{proof} (i) Assume that $[P]$ in $\pic{A}$. Since $r:A \to \operatorname{End}(_AP)$ is an epimorphism of $A$-bimodules and $\ker r=\operatorname{Ann}(P_A)$, it follows that
	$0\rightarrow \operatorname{Ann}(P_A)\rightarrow A \to \operatorname{End}(_AP) \to 0$  is an exact sequence of $A$-bimodules. Notice that the previous sequence splits because $\operatorname{End}(_AP)$ is projective. Hence there exists a central idempotent $e_1$ of $A$ such that $\operatorname{Ann}(P_A)=Ae_1$. Therefore the map $r:(1-e_1)A\to \operatorname{End}(_AP)$ is a bimodule isomorphism. The item (b) is proved similarly.\smallbreak


\noindent (ii) Since $(I, J, P, Q ,\tau,\mu)$ is a strict Morita context, it follows that $_IP$, $P_J$, $_JQ $ and $Q_I$ are  finitely generated and projective modules. Using that $I$ and $J$ are unitals, we have that $I$ and $J$ are direct summands of $A$ and consequently $I$ and $J$ are projective $A$-modules. Thus, $P$ and $Q$ are left and right finitely generated and projective $A$-modules. From \cite[pg. 167]{Ja} follows that  the map $l:I\to \operatorname{End}(P_I)$, $a\mapsto l_a$, is a ring isomorphism. Observe that $\operatorname{End}(P_I)=\operatorname{End}(P_A)$. In fact, the inclusion $\operatorname{End}(P_A)\subset \operatorname{End}(P_I)$ is trivial. For the reverse inclusion, we consider $f\in \operatorname{End}(P_I)$, $x\in P$ and $a\in A$. Then, $f(xa)=f((x1_I)a)=f(x(1_Ia))=f(x)1_Ia=f(x)a$, where $1_I$ is the identity element of $I$. Hence $f\in \operatorname{End}(P_A)$ and $l:A\to \operatorname{End}(P_A)$ is an epimorphism of $A$-bimodules. Similarly one shows that $r:A\to \operatorname{End}(_AP)$ is an epimorphism of $A$-bimodules. Therefore $[P]\in \pic{A}$.\smallbreak

\noindent (iii) Assume that $h:P\to Q$ is an isomorphism of left $A$-modules. Then the map $$h^*: \operatorname{End}(_AQ) \to \operatorname{End}(_AP),\qquad h^*(f)=h\m fh,\quad \text{for all } f\in\operatorname{End}(_AQ),$$ is a ring isomorphism. Since $[P],[Q]\in \pic{A}$, it follows from (ii) that there  are central  idempotents $e_{1,P},\, e_{1,Q}$ of $A$ such that   $\operatorname{Ann}(P_A)=Ae_{1,P}$ and $\operatorname{Ann}(Q_A)=Ae_{1,Q}$. Note that $h^*$ induces a map  $\theta:A(1-e_{1,Q})\to A(1-e_{1,P})$ defined by the following way. Consider $t_{Q}\in  A(1-e_{1,Q})$. By (i), we have that $r_{t_{Q}}\in \operatorname{End}(_AQ)$ and consequently $h^*(r_{t_{Q}})\in \operatorname{End}(_AP)$. Again we obtain from (i) that there is a unique  $t_{P}\in  A(1-e_{1,P})$ such that $r_{t_{P}}=h^*(r_{t_{Q}})$. Hence $\theta:A(1-e_{1,Q})\to A(1-e_{1,P})$ given by $\theta(t_{Q})=t_{P}$ is a well-defined map. Moreover, from $r_{t_{P}}=h^*(r_{t_{Q}})$ follows that
\begin{align}\label{hcond}
h(x)t_{Q}=h(xt_{P})=h(x\theta(t_Q)),\text{   for all   }x\in P.
\end{align} 
We shall check that $\theta\in I(A)$. Let $t_{Q},\, t'_{Q}\in A(1-e_{1,Q})$ and assume that $\theta(t_{Q})=t_{P}$, $\theta(t'_{Q})=t'_{P}$ and $\theta(t_{Q}t'_{Q})=t''_{P}$. Using \eqref{hcond} follows that $t_{Q}t'_Q-t''_Q\in \operatorname{Ann}(P_A)=Ae_{1,P}$. Thus $	t_{Q}t'_Q-t''_Q\in  Ae_{1,P}\cap A(1-e_{1,P})=\{0\}$. Hence $\theta$ preserves products. Similarly we can prove that $\theta$ preserves sums. It is straightforward to check that $\theta$ is bijective and $\theta(1-e_{1,Q})=1-e_{1,P}$. Therefore $\theta\in I(A)$. Notice that $x(1-e_{1,P})=x$, for all $x\in P$, because $\operatorname{Ann}(P_A)=Ae_{1,P}$. Hence $\,_{\id}P_\theta$ is a unital $A$-bimodule. Finally, observe that from  \eqref{hcond} we get $h(x\cdot a)=h\big(x\theta(a(1-e_{1,Q}))\big)=h(x)a(1-e_{1,Q})=h(x)a$, for all $x\in P$ and $a\in A$. Thus $h:\,_{\id}P_\theta\to Q$ is an $A$-bimodule isomorphism. The converse is immediate.\smallbreak

\noindent (iv) Since $M$ is a right $A$-module finitely generated and projective, there is a dual basis $\{m_i, f_i\}_{1\leq i\leq n}$ for  $M_A$. For each $1\leq i\leq n$, consider 
$\varphi_i : M_{\theta}\to A$ given by $\varphi_i(m)= \theta\m(f_i(m)1_{\theta})$. Notice that for $a\in A$ and $m\in M_{\theta}$, we have
$$\varphi_i(m\cdot a)=\theta\m \big(f_i(m\theta( a1_{\theta\m}))1_\theta\big)=\theta\m (f_i(m)1_\theta)a=\varphi_i(m)a,$$
which implies that $\varphi_i$ is a morphism of right $A$-modules. Moreover,
\begin{align*}
	m=\sum\limits_{i=1}^nm_if_i(m)=\sum\limits_{i=1}^nm_if_i(m)1_\theta=\sum\limits_{i=1}^nm_i\theta(\theta\m(f_i(m)1_\theta))=\sum\limits_{i=1}^nm_i\cdot \varphi_i (m).\end{align*}
Hence $\{m_i, \varphi_i\}_{1\leq i\leq n}$ is a dual basis  for  $M_\theta$ and so $M_{\theta}$ is a right $A$-module finitely generated and projective. In order to check that $l:A\to \operatorname{End}(M_{\theta})$ is an epimorphism, take $f\in  \operatorname{End}(M_\theta)$. Then 
$f(m)\theta(a1_{\theta\m})=f(m)\cdot a=f(m\cdot a)=f(m\theta(a1_{\theta\m}))$, for all $a\in A$ and $m\in M$. As $\theta$ is bijective we conclude that $f(mb)=f(m)b$, for all $m,b\in A1_{\theta}$. Thus $f(ma)=f(ma1_\theta)=f(m)a1_\theta=f(m)a$, for all $a\in A$ and $m\in M$. Consequently $f\in \operatorname{End}(M_A)$ and we have $\operatorname{End}(M_\theta)\subset \operatorname{End}(M_A)$. The other inclusion is trivial and we obtain $\operatorname{End}(M_\theta)=\operatorname{End}(M_A)$. From Lemma \ref{principal-unital-ideal} follows that $l:A\to \operatorname{End}(M_A)$ is an epimorphism which implies that $l:A\to \operatorname{End}(M_{\theta})$ is an epimorphism. Using similar arguments, we get that $r:A\to \operatorname{End}(_{\theta}M)$ is surjective. Therefore $[_{\id}M_\theta]\in \pic{A}$. Analogously, $[_{\theta}M_{\id}]\in \pic{A}$.
\smallbreak

\noindent (v) It follows from (ii) above and  Lemma \ref{isomul} (ii). 
\end{proof}

\subsection{Epsilon-strongly graded rings that are Azumaya algebras}\label{ppr} 

We start by  recalling from \cite{DEP} that a  {\it (unital) partial representation of $G$ into an algebra (or a monoid) $S$} is a map $\Phi: G \to S$ which satisfies: 
\begin{enumerate}[\rm (i)]
\item $\Phi (1_G )  = 1_S$,
\item  $\Phi (g\m) \Phi (g) \Phi (h) = \Phi (g\m) \Phi (g h)$,
\item $\Phi (g ) \Phi (h) \Phi (h\m) = \Phi (g h) \Phi ( h \m )$,
\end{enumerate}
for all $g,h\in G$.
 It follows  from (i) and (ii) above  that $\Phi (g) \Phi (g\m) \Phi (g) = \Phi (g),$ for any $g\in G.$

From now on in this subsection, $A=\mathlarger{\mathlarger{\oplus}}_{g\in G}\,A_g$ denotes an epsilon-strongly $G$-graded ring and  $R:=A_1$. Our aim is to give sufficient conditions for $A$ to be an Azumaya algebra. 
The next is an immediate consequence of Proposition \ref{epsilon1} (iii),  Lemma \ref{isomul} (ii)  and Proposition \ref{1pic} (v).

\begin{prop}\label{pr-picard}
The map $\Phi: G \to\pic{R}$ defined by $$\Phi(g)=[A_g],\quad \text{for all } g\in G,$$ is a unital partial representation of $G$ on $\pic{R}$ such that $\Phi (g)\Phi (g\m)=[R\epsilon_g]$. 
\end{prop}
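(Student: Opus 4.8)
The plan is to reduce the whole statement to a single multiplication isomorphism and then read off the three defining relations, together with the formula $\Phi(g)\Phi(g^{-1})=[R\epsilon_g]$, from the idempotent identities \eqref{eqrep2} and \eqref{eqrep23}. Since $\Phi$ is already \emph{well defined} by Proposition \ref{1pic} (v) (each $[A_g]\in\pic{R}$) and $\pic{R}$ is a monoid with $[P][Q]=[P\otimes_R Q]$, only the partial-representation axioms remain to be checked.

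\textbf{Key step.} First I would prove that for all $a,b\in G$ the multiplication map
\[
m_{a,b}\colon A_a\otimes_R A_b\longrightarrow A_aA_b,\qquad x\otimes y\longmapsto xy,
\]
is an isomorphism of $(R,R)$-bimodules, where by \eqref{eqrep2} and \eqref{eqrep23} one has $A_aA_b=\epsilon_aA_{ab}=A_{ab}\epsilon_{b^{-1}}$. Bilinearity and surjectivity are immediate, so the entire point is injectivity, and this is where I would invoke the strict Morita context of Lemma \ref{isomul} (ii): strictness provides elements $v_i\in A_a$, $u_i\in A_{a^{-1}}$ with $\sum_i v_iu_i=\epsilon_a$. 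I would then define $\psi\colon A_aA_b\to A_a\otimes_R A_b$ by $\psi(z)=\sum_i v_i\otimes u_iz$, which is legitimate because $u_iz\in A_{a^{-1}}A_{ab}\subseteq A_b$, and check that $\psi\circ m_{a,b}=\mathrm{id}$: for $x\in A_a$, $y\in A_b$ one has $u_ix\in A_{a^{-1}}A_a\subseteq R$, so it moves across the tensor and
\[
\psi(xy)=\sum_i v_i\otimes u_ixy=\Big(\sum_i v_iu_i\Big)x\otimes y=\epsilon_a x\otimes y=x\otimes y.
\]
Hence $m_{a,b}$ is a bimodule isomorphism, and in particular $\Phi(a)\Phi(b)=[A_a\otimes_R A_b]=[A_aA_b]$.

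\textbf{Consequences.} Taking $b=a^{-1}$ yields $\Phi(g)\Phi(g^{-1})=[A_gA_{g^{-1}}]=[\epsilon_gR]=[R\epsilon_g]$, the asserted idempotent, while $\Phi(1)=[A_1]=[R]$ is the identity of $\pic{R}$. For relation (ii) I would combine the key step with associativity of $\otimes_R$ and the standard isomorphism $R\epsilon_{g^{-1}}\otimes_R A_h\cong\epsilon_{g^{-1}}A_h$ to get
\[
\Phi(g^{-1})\Phi(g)\Phi(h)=\big[(A_{g^{-1}}\otimes_R A_g)\otimes_R A_h\big]=[R\epsilon_{g^{-1}}\otimes_R A_h]=[\epsilon_{g^{-1}}A_h],
\]
whereas $\Phi(g^{-1})\Phi(gh)=[A_{g^{-1}}A_{gh}]=[\epsilon_{g^{-1}}A_h]$ by \eqref{eqrep2}, so the two coincide. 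Relation (iii) is symmetric: both $\Phi(g)\Phi(h)\Phi(h^{-1})$ and $\Phi(gh)\Phi(h^{-1})$ reduce to $[A_{gh}A_{h^{-1}}]=[A_g\epsilon_h]$, now using \eqref{eqrep23} together with $A_h\otimes_R A_{h^{-1}}\cong R\epsilon_h$.

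\textbf{Main obstacle.} The only nonroutine point is the injectivity of $m_{a,b}$ in the key step; everything afterwards is bookkeeping with the central idempotents $\epsilon_g$ and the identities \eqref{eqrep2}, \eqref{eqrep23}. The subtlety is that the naive grading shift $M_x=A_{xh}$ is \emph{not} a graded $(A,A)$-bimodule, since right multiplication lands in degree $xhy$ rather than $xyh$; thus Lemma \ref{isomul} (i) cannot be applied directly to $A_a\otimes_R A_b$ for unrelated $a,b$, and this is exactly why I route the injectivity argument through the dual-basis elements furnished by the strict Morita context of Lemma \ref{isomul} (ii).
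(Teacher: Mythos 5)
Your proof is correct and follows essentially the same route as the paper, which simply declares the proposition an immediate consequence of Proposition \ref{epsilon1} (iii), Lemma \ref{isomul} (ii) and Proposition \ref{1pic} (v) — precisely the ingredients you use ($[A_g]\in\pic{R}$ for well-definedness, the dual-basis elements $\sum_i v_iu_i=\epsilon_a$ from the strict Morita context for injectivity of the multiplication map, and the identities \eqref{eqrep2}, \eqref{eqrep23} for the partial-representation axioms). Your write-up just supplies the details the paper leaves implicit.
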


Assume that $R$ is commutative. Following the notation of $\S\,$3.3 in \cite{DDR}, we consider the family of $R$-bimodules isomorphism $f^{\Phi}=\{f^{\Phi}_{g,h}:A_g\otimes_R A_h\to \epsilon_gA_{gh}\,:\,g,h\in G\}$, where  $f^{\Phi}_{g,h}(a_g\otimes a_h)=\epsilon_ga_{g}a_h$, for all $a_g\in A_g$ and $a_h\in A_h$. By \eqref{eqrep2}, $f^{\Phi}_{g,h}$ is a $R$-bimodule isomorphism. Moreover, it is clear that $f^{\Phi}$ satisfies the commutative diagram (29) of \cite{DDR}. Consequently $f^{\Phi}$ is a {\it factor set for $\Phi$} (see \cite{DDR} p. 218).  Thus, we can consider the {\it partial generalized crossed product} $\Delta(\Phi)$ as in \cite[$\S\,$3.3]{DDR}. Explicitly, $\Delta(\Phi)=\oplus_{g\in G} A_g$ is a ring with multiplication defined by
\[a_g\ast_{\Phi} a_h=f^{\Phi}_{g,h}(a_g\otimes a_h)=\epsilon_g a_ga_h\]
Since $\epsilon_ga_g=a_g$, for all $a_g\in A_g$,  we have the following result.

\begin{lem} \label{generalized-pcp}
	If $R$ is commutative then $\Delta(\Phi)\simeq A$ as algebras.	
\end{lem}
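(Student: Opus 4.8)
The plan is to exhibit the isomorphism as the identity map on the common underlying additive group. By construction $\Delta(\Phi)=\bigoplus_{g\in G}A_g$ has the same underlying abelian group as $A$, so I would simply take $\psi=\id$ and verify that it is multiplicative; additivity is then automatic, and bijectivity is trivial.

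The essential step is to compare the two products on homogeneous elements. For $a_g\in A_g$ and $a_h\in A_h$, the multiplication of $\Delta(\Phi)$ is, by definition of the factor set $f^{\Phi}$,
\[
a_g\ast_{\Phi}a_h=f^{\Phi}_{g,h}(a_g\otimes a_h)=\epsilon_g a_g a_h .
\]
Using associativity of the product of $A$ together with Proposition \ref{epsilon1} (iii), which gives $\epsilon_g a_g=a_g$, I would rewrite this as $a_g\ast_{\Phi}a_h=(\epsilon_g a_g)a_h=a_g a_h$, the latter being the product computed in $A$. Hence the two multiplications coincide on homogeneous generators, and by $\mathbb{Z}$-bilinearity they coincide on all of $\bigoplus_{g\in G}A_g$. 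This shows that $\psi=\id$ is a ring homomorphism.

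It then remains to address the units and the algebra structure. By Remark \ref{rem-epsilon} the identity element of $A$ is $\epsilon_1=1_R$; since the two multiplications are literally equal, $\epsilon_1$ is also the identity of $\Delta(\Phi)$, so $\psi$ is unital. As $R$ is central and the products agree on the nose, $\psi$ respects the ambient ($R$-, hence $R^{\gamma}$-) algebra structure, so $\psi$ is an algebra isomorphism.

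I expect no real obstacle here: the whole content of the lemma is the cancellation $\epsilon_g a_g=a_g$, which collapses the twisted product $\ast_{\Phi}$ onto the ambient product of $A$. The only point worth a word of care is to note explicitly that $\Delta(\Phi)$ is built on the very same additive group $\bigoplus_{g\in G}A_g$ as $A$, so that the identity map is available as a candidate in the first place; this is immediate from the definition of the partial generalized crossed product.
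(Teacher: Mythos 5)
Your proposal is correct and is essentially identical to the paper's argument: the paper proves the lemma by the same observation that $a_g\ast_{\Phi}a_h=\epsilon_g a_g a_h=a_g a_h$ (using $\epsilon_g a_g=a_g$ from Proposition \ref{epsilon1}~(iii)), so the identity map on $\bigoplus_{g\in G}A_g$ is the desired algebra isomorphism. No gaps.
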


Let $g\in G$.  The relation $\epsilon_g \in A_g A_{g^{-1}}$ 
implies that there are a positive integer $n_g$, a subset $\{u_{i,g}\}_{i=1}^{n_g} \subset A_g$ and a subset $\{v_{i,g^{-1}}\}_{i=1}^{n_g}\subset A_{g^{-1}}$
such that $\sum_{i=1}^{n_g} u_{i,g} v_{i,g\m} = \epsilon_g$.
Unless otherwise stated, the elements 
$u_{i,g},\,v_{i,g^{-1}}$ are fixed. We also assume that $n_1 = 1$ and $u_{1,1} = v_{1,1} = 1$.
Then, for each $g\in G$, we obtain the following additive map
\begin{align}\label{defgam}
	&\Gamma_g:A\to A,&  &\Gamma_{g}(a)=\sum_{i=1}^{n_g} u_g^{(i)} av_{g^{-1}}^{(i)}.&
\end{align}
By Proposition \ref{pr-picard} and  Proposition 3.10 of \cite{DDR}, we have a partial action of $G$ on $Z(R)$
\begin{equation}\label{gamma}
	\gamma=(Z(R)\epsilon_{g},\gamma_{g})_{g\in G},
\end{equation} 
where $\gamma_g=Z(R)\epsilon_{g\m}\to Z(R)\epsilon_{g}$  is the restriction of $\Gamma_g$ to $Z(R)\epsilon_{g\m}$.
Moreover, by Lemma 3.11 (i) of \cite{DDR}, the partial action $\gamma$ satisfies  
\begin{equation}\label{eqga}\gamma_g(r)a_g=a_gr,\,\,\,\text{ for all }\,r\in Z(R)\,\text{       and      }\,a_g\in A_g,\,\, g\in G.
\end{equation}

In order to prove the next result we recall some notions. Given a unital partial action $\rho=(T_g, \rho_g)_{g\in G}$ of $G$ on a ring $T$, the {\it subring of invariant elements of $T$} is the subring $T^{\rho}=\{t \in T\,:\,\rho_{g}(t1_{g^{-1}})=t1_g\}$ of $T$.  We also recall from \cite{DFP} that the ring extension $T^{\rho} \subset T$ is a  \textit{partial Galois extension}  if  there are a positive integer $m$ and elements $x_i,y_i\in T$, $1\leq i\leq m$, such that 
\begin{equation*}\label{G2}
	\sum_{i=1}^mx_i\rho_{g}(y_i1_{g^{-1}})=\delta_{1, g},\,\, \text{for all }\, g \in G.
\end{equation*}
The set $\{x_i,y_i\}_{i=1}^m$ is a called a \textit{partial Galois coordinates system} of the extension $T^{\rho} \subset T$.  Also, an algebra $T$ over a commutative ring $S$ is {\it Azumaya} if $Z(T)=S$ and $T$ is separable over $S$.

Let $R$ be a commutative ring. We denote by $\picm{R}$  the submonoid of $\pic{R}$  consisting of isomorphism classes of central $R^\gamma$-bimodules, that is, a class $[P]\in \pic{R}$ belongs to $\picm{R}$ if and only if $\{r\in R^{\gamma}\,:\,rx=xr$, for all $x\in P\}=R^{\gamma}$.  \smallbreak

Now we will prove the main result of this section.

\begin{thm} \label{azumaya}
Assume that $R$ is commutative and let $\gamma$ be the partial action of $G$ on $R$ given in \eqref{gamma}.  If  $R^\gamma\subset R$ is a partial Galois extension then $A$ is an Azumaya  $R^\gamma$-algebra containing $R$ as a maximal commutative subalgebra.
\end{thm}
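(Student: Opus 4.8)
The plan is to establish the two assertions separately: that $R$ is a maximal commutative subalgebra with $Z(A)=R^\gamma$, and that $A$ is separable over $R^\gamma$; by the definition of Azumaya algebra recalled above these give the theorem. Throughout I identify $A$ with the partial generalized crossed product $\Delta(\Phi)$ via Lemma \ref{generalized-pcp}, so the product of homogeneous elements is that of $A$ (recall $\epsilon_g a_g=a_g$), and I use the relation \eqref{eqga} in the form $\gamma_g(r\epsilon_{g\m})a_g=a_gr$ for $r\in R$, $a_g\in A_g$, a partial Galois coordinate system $\{x_s,y_s\}_{s=1}^m$ of $R^\gamma\subset R$ satisfying $\sum_s x_s\gamma_g(y_s\epsilon_{g\m})=\delta_{1,g}$ for all $g$ (so $\sum_s x_sy_s=1$), and the generators $\epsilon_g=\sum_{i=1}^{n_g}u_{i,g}v_{i,g\m}$ of \eqref{defgam}.

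First I would compute the centralizer of $R$ in $A$. Let $z=\sum_g z_g\in A$, with $z_g\in A_g$, commute with every element of $R$. Comparing homogeneous components in $rz=zr$ and using \eqref{eqga} as $z_gr=\gamma_g(r\epsilon_{g\m})z_g$ gives $\big(r-\gamma_g(r\epsilon_{g\m})\big)z_g=0$ for all $r\in R$ and all $g$. For $g\neq 1$, substituting $r=y_s$, multiplying on the left by $x_s$ and summing over $s$ collapses the left side, since $\sum_s x_sy_s=1$ while $\sum_s x_s\gamma_g(y_s\epsilon_{g\m})=0$; hence $z_g=0$. Thus the centralizer of $R$ is $R$ itself, so $R$ is maximal commutative and $Z(A)\subseteq R$. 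If moreover $z=z_1\in R$ is central, then for each $a_g\in A_g$ one has $z a_g=a_g z=\gamma_g(z\epsilon_{g\m})a_g$, so $w:=z\epsilon_g-\gamma_g(z\epsilon_{g\m})\in R\epsilon_g=A_gA_{g\m}$ annihilates $A_g$; multiplying by $\epsilon_g=\sum_i u_{i,g}v_{i,g\m}$ gives $w=w\epsilon_g=0$, i.e. $z\in R^\gamma$. The converse inclusion $R^\gamma\subseteq Z(A)$ is immediate from \eqref{eqga}, so $Z(A)=R^\gamma$.

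For separability over $S:=R^\gamma$ I would exhibit a separability idempotent in $A\otimes_S A$, built from both coordinate systems in imitation of the classical separability idempotent of a crossed product of a Galois extension, namely
\[
e=\sum_{g\in G}\ \sum_{i=1}^{n_g}\ \sum_{s=1}^{m} x_s\,u_{i,g}\ \otimes_{S}\ v_{i,g\m}\,\gamma_g\!\big(y_s\epsilon_{g\m}\big).
\]
Writing $\mu\colon A\otimes_S A\to A$ for the multiplication, one gets $\mu(e)=\sum_{g,s}x_s\big(\sum_i u_{i,g}v_{i,g\m}\big)\gamma_g(y_s\epsilon_{g\m})=\sum_{g,s}x_s\,\gamma_g(y_s\epsilon_{g\m})=\sum_g\delta_{1,g}=1$; observe how the Galois coordinates collapse the sum over $g$, which is exactly what is needed, since $\sum_g\epsilon_g\neq 1$ in general (so $A$ is \emph{not} separable over $R$ and the partial Galois hypothesis is essential). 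The sum is finite because the partial Galois data make the trace $r\mapsto\sum_g\gamma_g(r\epsilon_{g\m})$ well defined on $R$, forcing $\operatorname{supp}(A)=\{g:\epsilon_g\neq 0\}$ to be finite. It then remains to verify the bimodule identity $ae=ea$ for all $a\in A$, which I would check on the generators $a=r\in R$ and $a=a_h\in A_h$, using \eqref{eqga}, the compatibility of $\gamma$ with the factor set $f^{\Phi}$ entering the construction of $\Delta(\Phi)$, and the second Morita relation $\sum_i v_{i,g\m}u_{i,g}=\epsilon_{g\m}$ from Lemma \ref{isomul}(ii).

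I expect this last bimodule verification to be the main obstacle: one must push homogeneous elements past $e$ across the tensor over $S=R^\gamma$, and the twisting encoded in the multiplication of $A$ interacts nontrivially with the Galois coordinates, so the computation requires the partial-action identities to be applied degree by degree. Once $e$ is shown to be a separability idempotent, $A$ is separable over $R^\gamma$; together with $Z(A)=R^\gamma$ this yields that $A$ is an Azumaya $R^\gamma$-algebra, while the centralizer computation gives that $R$ is a maximal commutative subalgebra. Alternatively, since $\Phi$ is a unital partial representation into $\pic{R}$ (Proposition \ref{pr-picard}) with factor set $f^{\Phi}$, one may instead deduce separability by invoking the partial generalized crossed product machinery of \cite{DDR} directly, which packages precisely this interaction between a $\pic{R}$-valued partial representation and a partial Galois extension.
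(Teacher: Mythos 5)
Your first half is correct and is genuinely more elementary than what the paper does: the paper never argues at the level of elements, but instead builds the auxiliary partial representation $\Phi_0(g)=[{}_{\gamma_{g\m}}(R\epsilon_{g\m})_{\id}]$, the induced partial action $\alpha^*$ on $\picm{R}$, and the $1$-cocycle $f(g)=[A_g\otimes_R{}_{\gamma_g}(R\epsilon_g)_{\id}]$, verifies the cocycle identity $f(g)\alpha^*_g\big(f(h)[R\epsilon_{g\m}]\big)=f(gh)[R\epsilon_g]$ by a long computation in $\picm{R}$, and then obtains \emph{both} the Azumaya property and the maximal commutativity of $R$ at once from Proposition 6.3 of \cite{DPPR} combined with Lemma \ref{generalized-pcp}. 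Your direct centralizer computation (killing $z_g$ for $g\neq 1$ with the Galois coordinates, and showing $Z(A)=R^\gamma$ via $w\epsilon_g=\sum_i wu_{i,g}v_{i,g\m}=0$) is a valid self-contained substitute for that part of the conclusion.

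The separability half, however, has a genuine gap: the proposed element $e$ is not a Casimir element. Using \eqref{eqga} one has $v_{i,g\m}\gamma_g(y_s\epsilon_{g\m})=y_sv_{i,g\m}$, so your $e$ equals $\sum_{g,i,s}x_su_{i,g}\otimes_S y_sv_{i,g\m}$; the identity $\mu(e)=1$ holds, but $ae=ea$ already fails in the classical (strongly graded) special case $R=\mathbb{C}$, $R^\gamma=\mathbb{R}$, $G=\mathbb{Z}_2=\{1,\sigma\}$, $A=\mathbb{C}\oplus\mathbb{C}u$ with $uz=\bar{z}u$, $u^2=1$ (so $\epsilon_\sigma=1$, $n_\sigma=1$, $u_{1,\sigma}=v_{1,\sigma}=u$), with Galois coordinates $x_1=\tfrac12$, $y_1=1$, $x_2=-\tfrac{i}{2}$, $y_2=i$. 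Indeed, since $ui=-iu$, the component of $ie-ei$ coming from $g=\sigma$ is
\[
\sum_{s}\big(ix_su\otimes_{\mathbb{R}} y_su+x_su\otimes_{\mathbb{R}} iy_su\big)=iu\otimes_{\mathbb{R}} u+u\otimes_{\mathbb{R}} iu\neq 0
\]
in $A\otimes_{\mathbb{R}}A$, because $\{1,i,u,iu\}$ is an $\mathbb{R}$-basis of $A$. So the explicit route does not close as written (and no simple modification of this shape will: for $\operatorname{End}_k(P)$ with $P$ projective but not free the separability idempotent is not given by such a symmetric formula). The fallback of ``invoking the machinery of \cite{DDR}/\cite{DPPR}'' is the paper's actual proof, but it is not free: one must construct the cocycle $f$ above and verify the cocycle identity before Proposition 6.3 of \cite{DPPR} applies, and that verification is the bulk of the paper's argument and is absent from your proposal. (A minor further point: the finiteness of $\operatorname{supp}(A)$ does not follow formally from the displayed Galois condition; it comes from the standing finiteness hypotheses in the partial Galois theory of \cite{DFP}.)
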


\begin{proof}
Let $g\in G$. Using the notation introduced in \eqref{left-action}, we have by \eqref{eqga} that the set  $_{\gamma_{g\m}}(R\epsilon_{g\m})_{\id}$ is a  central $R^\gamma$-bimodule via
\begin{align*}
	&r\cdot x=\gamma_{g\m}(r\epsilon_{g})x,& &x\cdot r=xr,& &\text{for all }r\in R\text{ and }x\in R\epsilon_{g\m}.&
\end{align*}
It follows from Proposition \ref{1pic} (iv) and Proposition 6.2 of \cite{DPP}  that 
\[\Phi_0 : G\to \picm{R},\qquad\Phi_0 (g)=[_{\gamma_{g\m}}(R\epsilon_{g\m})_{\id}]\in  \picm{R}\] is a partial representation of $G$. Moreover, $\Phi_0$  satisfies $\Phi_0 (g)\Phi_0 (g\m)=[R\epsilon_g]$ and $\Phi_0 (g)[R\epsilon_{g\m}]=\Phi_0 (g)=[R\epsilon_{g}]\Phi_0 (g)$, for all $g\in G$.  By Theorem  3.8 and  Proposition 6.2 of \cite{DPP},  $\Phi_0$ induces a partial  action $\alpha^*=(X_g, \alpha^*_g)_{g\in G}$ of $G$ on the monoid $\picm{R},$ where $X_g=[R\epsilon_g]\pic{R}\text{   and     }$
$$\alpha^*_g([P])=[_{\gamma_{g\m}}(R\epsilon_{g\m})_{\id}][P][_{\gamma_g}(R\epsilon_{g})_{\id}]=\Phi_0 (g)[P]\Phi_0 (g\m),\,\,\,\, [P]\in X_{g\m}.$$ 
Hence the corresponding group of  $1$-cocycles $Z^1=Z^1(G, \alpha^{\star}, \picm{R})$ is 
\begin{align*}
	Z^1=\{f\in C^1\,:\, f(gh)1^{*}_g=f(g)\alpha^{*}_g(f(h)1^{*}_{g^{-1}}), \, \text{for all } g,h \in G\}
\end{align*}
where $C^1=C^1(G,\alpha^{*}, \pic{R})$ is the set of all maps $f:G\to \pic{R}$ such that $f(g)\in \U(X_g)$,  for all $g\in G$. Notice that $1^{*}_g=[R\epsilon_{g}]$, for all $g\in G$.  We shall construct an element in $Z^1(G,\alpha^{*},\picm{R})$. For each $g\in G$ set $M_g:=A_g{\otimes_R}\,_{ \gamma_{g}}\!(R\epsilon_{g})_{\id}$. Then  items (iv) and (v) of Proposition \ref{1pic} imply that   $[M_g]\in  \picm{R}$. Moreover 
$$[M_g][_{ \gamma_{g\m}}\!(R\epsilon_{g\m})_{\id}]=[M_g]\Phi_0 (g\m)=[M_g]\Phi_0 (g\m)[R\epsilon_g]$$
and   $ M_g\otimes_R R\epsilon_g\simeq M_g$ as $R$-bimodules, where $R\epsilon_g$ is an a sub-bimodule of the bimodule $R$. Consider the map $f:G\to \picm{R}$ given by: $f(g)=[M_g]$, for all $g\in G$. Observe that  $f(g)=\Phi (g)\Phi_0 (g\m)$ in $\picm{R}$ and 
\begin{align*} f(g)\alpha^*_g \big( f(h)[R\epsilon_{g\m}] \big)&=\Phi (g)[\Phi_0 (g\m)\Phi_0 (g)]\Phi (h)\Phi_0 (h\m))([R\epsilon_{g\m}]\Phi_0 (g\m))
	\\&=\Phi (g)[R\epsilon_{g\m}]\Phi (h)\Phi_0 (h\m)\Phi_0 (g\m)
	\\&=\Phi (g)[R\epsilon_{g\m}]\Phi (h)[\Phi_0 (h\m)\Phi_0 (g\m)\Phi_0 (g)]\Phi_0 (g\m)
	\\&=\Phi (g)[R\epsilon_{g\m}]\Phi (h)[\Phi_0 (h\m g\m)\Phi_0 (g)\Phi_0 (g\m)
	\\&=\Phi (g)\Phi (h)\Phi_0 (h\m g\m)[R\epsilon_g]
	\\&=\Phi (gh)[R\epsilon_{h\m}]\Phi_0 (h\m g\m)[R\epsilon_g]\end{align*}\begin{align*}
	\\&=\Phi (gh)\Phi_0 (h\m)\Phi_0 (h)\Phi_0 (h\m g\m)[R\epsilon_g]
	\\&=\Phi (gh)\Phi_0 (h\m)\Phi_0 ( g\m)[R\epsilon_g]
	\\&=\Phi (gh)\Phi_0 (h\m g\m)[R\epsilon_g]
	\\&=f(gh)[R\epsilon_g],
\end{align*}
and $f(g)\alpha^*_g \big( f(h)[R\epsilon_{g\m}] \big)=f(gh)[R\epsilon_g],$ for all $g,h\in G$. Also, by setting $h=g\m$, we get $f(g)\alpha^*_g \big( f(g\m)[R\epsilon_{g\m}] \big)=[R\epsilon_g]$ and
 $f\in Z^1(G, \alpha^{*}, \picm{R})$. Moreover, $f(g)\Phi_0(g)=[A_g][R\epsilon_{g\m}]=[A_g]=\Phi_0(g)$. Therefore,   since $R^\gamma\subset R$ is a partial Galois extension,  it follows from Proposition 6.3 of \cite{DPPR} and Lemma \ref{generalized-pcp} that  $A$ is an Azumaya  $R^\gamma$-algebra containing $R$ as a maximal commutative subalgebra.
\end{proof}

\section {Graded matrix rings as partial crossed products}
Throughout this section, $A=\mathlarger{\mathlarger{\oplus}}_{g\in G}\,A_g$ denotes an epsilon-strongly $G$-graded ring and  $R=A_1$. 
We recall from \cite{NYOP} the following. An element $a_g\in A_g$ is  {\it epsilon-invertible } if there exists $b_{g\m}\in A_{g^{-1}}$ such that  $a_gb_{g\m}=\epsilon_g$ and $b_{g\m}a_g=\epsilon_{g^{-1}}$. If for any $g\in G$ there is an epsilon-invertible element in $A_g$ then the ring $A$ is called {\it an  epsilon-crossed product}. We observe that by Theorem 35 of \cite{NYOP}, epsilon-crossed product and partial crossed product are equivalent notions.

The ring of matrices $\operatorname{M}_n(A)$ with entries in $A$ is epsilon-strongly graded ring; see Example \ref{exam}. In this subsection, using the results of $\S\,$\ref{subs-picard-semi}, we characterize when $\operatorname{M}_n(A)$ is an epsilon-crossed product or equivalently when $\operatorname{M}_n(A)$ is a partial crossed product.

Consider the set $\compo{A}:=\{[A_g]\,:\, g\in G\}$. By Proposition \ref{1pic} (v), $\compo{A}$ is a subset of $\pic{R}$.  Also, if $\theta:R1_{\theta^{-1}}\to R1_{\theta}$ is an element in $I(R)$ then it follows from  Proposition \ref{1pic} (iv) that $[_{\id}(R1_{\theta})_\theta]$ is an element in $\pic{R}$. Therefore, we have the map $\omega:I_u(R)\to \pic{R}$ defined by $\omega(\theta)=[_{\id}(R1_{\theta})_\theta],$ where $I_u(R)$ is the subsemigroup of $I(R)$ of isomorphisms between unital ideals of $R.$  

The next result presents some new characterizations of epsilon-invertible elements and and epsilon-crossed products.

\begin{lem}\label{equiv} The  following assertions hold.
	\begin{enumerate}[\rm (i)]
		\item Let  $g\in G$ and $s_g\in A_g$ for which there are $u_{g\m},v_{g\m}\in A_{g\m}$ such that $s_gu_{g\m}=\ep_g$ and $v_{g\m}s_{g}=\ep_{g\m}.$ Then $u_{g\m}=v_{g\m}$ and $s_g$  is epsilon-invertible.
		\item For each $g\in G$, the following statements are equivalent:
		\begin{enumerate} [\rm (a)]
			\item there exists an epsilon-invertible element in  $ A_g$, 
			\item there is a left $R$-module isomorphism between $A_g$ and $R\ep_g$,
			\item there is a map $\theta: R\epsilon_{g\m}\to R\epsilon_g$  in $I_{u}(R)$ and $A_g\simeq\, _{\id}(R\ep_g)_\theta,$ as $R$-bimodules.
			
		\end{enumerate}
		
		\item Let $H(A)$ be the set of non-zero homogeneous elements of $A$. Then the following assertions are equivalent:
		\begin{enumerate} [\rm (a)]
			\item $A$ is an epsilon-crossed product,
			\item there is a map $\kappa: G\to H(A)$ such that $\kappa(g)\in A_g$ and  $R\kappa(g)=A_g=\kappa(g)R$, for all $g\in G$,
		    \item  $\compo{A}\subseteq {\rm im}\,\omega$,
			\item there is a map $\nu: G\to I_{u}(R)$ such that $A_g\simeq\, _{\id}(R1_{\nu(g)})_{\nu(g)}$ as $R$-bimodules, for all $g\in G$.
			
		\end{enumerate}
		
	\end{enumerate}
\end{lem}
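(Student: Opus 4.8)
The plan is to prove (i) directly, then establish the three equivalences in (ii) by a short cycle, and finally bootstrap (iii) from (ii) by running the argument uniformly over all $g\in G$ together with part (i). Part (i) is the standard ``left inverse equals right inverse'' computation adapted to the epsilon formalism: since $v_{g\m},u_{g\m}\in A_{g\m}$ we have $v_{g\m}\ep_g=v_{g\m}$ and $\ep_{g\m}u_{g\m}=u_{g\m}$, so
\[
v_{g\m}=v_{g\m}\ep_g=v_{g\m}(s_gu_{g\m})=(v_{g\m}s_g)u_{g\m}=\ep_{g\m}u_{g\m}=u_{g\m},
\]
and the two relations $s_gu_{g\m}=\ep_g$, $u_{g\m}s_g=\ep_{g\m}$ then exhibit $s_g$ as epsilon-invertible.

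For (ii), I would run the cycle (a)$\Rightarrow$(b)$\Rightarrow$(a) together with (a)$\Rightarrow$(c)$\Rightarrow$(b). The implication (a)$\Rightarrow$(b) is a direct construction: if $s_g$ is epsilon-invertible with inverse $b_{g\m}$, then $\psi(a)=ab_{g\m}$ is a left $R$-module map $A_g\to R\ep_g$ (recall $A_gA_{g\m}=R\ep_g$ by Lemma \ref{isomul}(ii)) with inverse $r\mapsto rs_g$, using $s_gb_{g\m}=\ep_g$ and $b_{g\m}s_g=\ep_{g\m}$. The implication (c)$\Rightarrow$(b) is immediate, since the underlying left $R$-module of $_{\id}(R\ep_g)_\theta$ is just $R\ep_g$. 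For (a)$\Rightarrow$(c) I would feed the left-module isomorphism from (b) into Proposition \ref{1pic}(iii) with $P=R\ep_g$ and $Q=A_g$ (both classes lie in $\pic{R}$ by Lemma \ref{principal-unital-ideal} and Proposition \ref{1pic}(v)); the resulting $\theta\in I(R)$ has domain and image generated by central idempotents, and computing the right annihilators $\operatorname{Ann}((A_g)_R)=R(1-\ep_{g\m})$ and $\operatorname{Ann}((R\ep_g)_R)=R(1-\ep_g)$ identifies them with $R\ep_{g\m}$ and $R\ep_g$, so $\theta\in I_u(R)$ as required.

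The main obstacle is the remaining implication (b)$\Rightarrow$(a): producing a genuine two-sided epsilon-inverse from a one-sided (left) module isomorphism $h\colon A_g\to R\ep_g$. The key observation is that $s_g:=h\m(\ep_g)$ makes $h\m$ agree with $r\mapsto rs_g$ on $R\ep_g$ (by left $R$-linearity and $r\ep_g=r$), so $A_g=Rs_g$ and, crucially, the map $r\mapsto rs_g$ is \emph{injective} on $R\ep_g$. Writing the identity of the unital ideal $A_{g\m}A_g=R\ep_{g\m}$ as $\ep_{g\m}=\sum_i c_ia_i$ with $c_i\in A_{g\m}$, $a_i\in A_g$ and substituting $a_i=r_is_g$, one obtains $b_{g\m}:=\sum_i c_ir_i\in A_{g\m}$ with $b_{g\m}s_g=\ep_{g\m}$. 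Setting $e:=s_gb_{g\m}\in R\ep_g$, one checks $es_g=s_g\ep_{g\m}=s_g=\ep_gs_g$, whence $(e-\ep_g)s_g=0$ and injectivity forces $e=\ep_g$, so $s_g$ is epsilon-invertible. I expect this injectivity-plus-idempotent step to be the delicate point, as it is exactly where the one-sidedness of the hypothesis is overcome.

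For (iii) I would apply (ii) uniformly in $g$. To see (a)$\Leftrightarrow$(b): given (a), choosing an epsilon-invertible $s_g$ and setting $\kappa(g)=s_g$ yields $A_g=Rs_g=s_gR$, the two generation identities following from $a=a\ep_{g\m}=(ab_{g\m})s_g$ and $a=\ep_ga=s_g(b_{g\m}a)$; conversely, from $R\kappa(g)=A_g=\kappa(g)R$ one expands $\ep_g\in A_gA_{g\m}$ and $\ep_{g\m}\in A_{g\m}A_g$ to extract $b_{g\m},b'_{g\m}\in A_{g\m}$ with $\kappa(g)b_{g\m}=\ep_g$ and $b'_{g\m}\kappa(g)=\ep_{g\m}$, and part (i) makes $\kappa(g)$ epsilon-invertible. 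For (a)$\Leftrightarrow$(c)$\Leftrightarrow$(d): by definition of $\omega$, statement (c) says exactly that each $[A_g]$ equals some $\omega(\theta)=[{}_{\id}(R1_\theta)_\theta]$, which is the bimodule isomorphism in (d); and $A$ is an epsilon-crossed product iff (ii)(a) holds for every $g$, iff (ii)(c) holds for every $g$, providing the choice function $\nu(g)=\theta_g$. The only point needing care in the reverse direction is that an $R$-bimodule isomorphism $A_g\simeq{}_{\id}(R1_{\nu(g)})_{\nu(g)}$ forces $1_{\nu(g)}=\ep_g$: this follows by comparing left annihilators, since isomorphic left $R$-modules have equal annihilators and a principal ideal generated by a central idempotent determines that idempotent uniquely, reducing us to (ii)(b).
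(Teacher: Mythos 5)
Your argument is correct, and for part (i), for (ii)(a)$\Rightarrow$(b) and (b)$\Rightarrow$(c), and for most of (iii) it coincides with the paper's proof. The one genuinely different step is how you close the cycle in (ii): the paper proves (c)$\Rightarrow$(a), using the full bimodule isomorphism $\mu_g\colon {}_{\id}(R\ep_g)_\theta\to A_g$ to obtain $A_g=\mu_g(\ep_gR)=s_g\theta(R\ep_{g\m})=s_gR\ep_g$, whence $\ep_g\in s_gA_{g\m}$ and $\ep_{g\m}\in A_{g\m}s_g$, finishing with (i); you instead prove (b)$\Rightarrow$(a) directly from the one-sided isomorphism $h\colon A_g\to R\ep_g$, noting that $h\m$ is right multiplication by $s_g:=h\m(\ep_g)$, so that $A_g=Rs_g$ and $r\mapsto rs_g$ is injective on $R\ep_g$; expanding $\ep_{g\m}\in A_{g\m}A_g=A_{g\m}Rs_g$ yields $b_{g\m}\in A_{g\m}$ with $b_{g\m}s_g=\ep_{g\m}$, and the injectivity forces $s_gb_{g\m}=\ep_g$ since $(s_gb_{g\m}-\ep_g)s_g=0$ in $R\ep_g$. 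This is valid and in fact slightly sharper: it shows that a \emph{left} module isomorphism alone already produces an epsilon-invertible element, whereas the paper must route through the bimodule statement (c). Likewise, in (iii) you reduce (d) to (ii)(b) by comparing left annihilators to force $1_{\nu(g)}=\ep_g$, where the paper argues (d)$\Rightarrow$(b) directly from the twisted right action via $\kappa(g)R=\mu_g(1_{\nu(g)}\cdot R)=\mu_g(R1_{\nu(g)})$; both reductions are sound. No gaps.
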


\begin{proof}
\noindent (i)  Let $g\in G$. Then $u_{g\m}=\ep_{g\m}u_{g\m}=v_{g\m}s_{g}u_{g\m}=v_{g\m}\ep_g=v_{g\m},$ as claimed. \smallbreak

\noindent (ii) Fix an element $g\in G$.  To prove that	(a)$\Rightarrow$(b), consider $s_g\in A_g$  an  epsilon-invertible element  with inverse  $s_{g\m}\in S_{g\m}$. The map from $A_g$ to $ R\epsilon_g$ that associates $x_g\mapsto x_gs_{g\m}\in R\epsilon_g$, $x_g\in A_g$, is a left  $R$-module isomorphism. In fact, its inverse map is given by $x\to xs_g\in  S_g$, $x\in R\epsilon_g$. For (b)$\Rightarrow$(c), we observe that by Proposition \ref{1pic} (v), $[A_g]\in \pic{R}$. Hence $[R\epsilon_g]\in \pic{R}$ and $A_g$ and $R\ep_g$ are isomorphic as left $R$-modules.  By the proof of item (iii) of Proposition \ref{1pic}, $A_g\simeq\, _{\id}(R\ep_g)_\theta$, as $R$-bimodules, where $\theta\in I_u(R)$, $\operatorname{dom}(\theta)=R(1-e_1)$, $\operatorname{im}(\theta)=R(1-e_2)$, $Re_1=\operatorname{Ann}((A_g)_R)$ and $Re_2=\operatorname{Ann}((R\ep_{g})_R)$. Note that $\operatorname{Ann}((A_g)_R)=R(1-\epsilon_{g\m})$ and $\operatorname{Ann}((R\ep_{g})_R)=R(1-\epsilon_{g})$. Therefore, $\theta: R\ep_{g\m}\to R\ep_{g}$ and (b)$\Rightarrow$(c) follows. In order to prove (c)$\Rightarrow$(a), consider an   $R$-bimodule isomorphism $\mu_g\colon \, _{\id}(R\ep_g)_\theta\to A_g$, where $\theta: R\epsilon_{g\m}\to R\ep_g$ is an element in $I_{u}(R)$. Consider $s_g:=\mu_g(\ep_g)$. Then $A_g=\mu_g(R\epsilon_g)=Rs_g$. On the other hand, $A_g=\mu_g(R\epsilon_g)=\mu_g(\epsilon_gR)=s_g\theta(R\ep_{g\m})=s_gR\ep_{g}$, 
and consequently
$$R\ep_g=A_gA_{g\m}=s_g(R\ep_{g\m}A_{g\m})\subset s_gA_{g\m}.$$ Hence, there exists an element $s_{g\m}\in A_{g\m}$ such that $s_gs_{g\m}=\ep_g$. Similarly, we have that
$R\ep_{g\m}=A_{g\m}A_{g}\subset A_{g\m}s_g,$ and there exists $u_{g\m}\in A_{g\m}$ such that $u_{g\m}s_g=\ep_{g\m}$. It follows from (i) that $s_g$ is epsilon-invertible and $S$ is an epsilon-crossed product. \smallbreak

\noindent (iii) For	(a)$\Rightarrow$(b) we define $\kappa: G\to H(A)$ by $\kappa(g)=s_g$, for all $g\in G$, where $s_g\in A_g$  is an  epsilon-invertible element  with inverse  $s_{g\m}\in A_{g\m}$.  Then, for each $x\in A_g$, we have  $x=x\ep_{g\m}=(x s_{g\m})s_g\in Rs_g$. Analogously, $A_g=s_gR$. In order to prove (b)$\Rightarrow$(a), consider $s_g:=\kappa(g)\in A_g$, for all $g\in G$. Then $$\epsilon_g\in A_{g}A_{g\m}=s_g(Rs_{g\m})R=s_g(s_{g\m}R)R=s_gs_{g\m}R.$$ Thus, there exists $r\in R$ such that  $\epsilon_g=s_g(s_{g\m}r)$. Analogously, there is $r'\in R$ such that $\epsilon_{g\m}=(s_{g\m}r')s_g$. Hence $s_g$ is epsilon-invertible thanks to (i) above.
For (c)$\Rightarrow$(d), since  $\compo{A}\subseteq {\rm im}\,\omega$, we can choose an element $\nu_g\in I_{u}(R)$ such that $[A_g]=\omega(\nu_g)$, for each $g\in G$. Thus, we have a well-defined map $\nu$ from $G$ to $I_{u}(R)$ given by $\nu(g):=\nu_g$ and $A_g\simeq \, _{\id}(R1_{\nu(g)})_{\nu(g)}$ as $(R,R)$-bimodules. It is clear that (d)$\Rightarrow$(c).
Observe that (a)$\Rightarrow$(d) follows directly from   (a)$\Rightarrow$(c) from (ii) above. Finally, we will show that (d)$\Rightarrow$(b). Let $g\in G$ and $\mu_g\colon \, _{\id}(R1_{\nu(g)})_{\nu(g)}\to A_g$ an $R$-bimodule isomorphism. Define   $\kappa: G\to H(A)$ by $\kappa(g)=\mu_g(1_{\nu(g)})$. Then $A_g=\mu_g(R1_{\nu(g)})=R\kappa(g)$. Notice that 
$1_{\nu(g)}\cdot r=r1_{\nu(g)}$, where $\cdot$ denotes the right action of $R$ on $R1_{\nu_g}$  given in \eqref{left-action} and $r\in R$. Therefore\[\kappa(g)R=\mu_g(1_{\nu(g)})R=\mu_g(1_{\nu(g)}\cdot R )=\mu_g(R1_{\nu(g)})=A_g,\] as desired.
\end{proof}

Now we present the main result of this subsection.

\begin{thm}\label{matcro} Let $n$ be a positive integer. The following assertions are equivalent:
	\begin{enumerate}[\rm (i)]
		\item  $\operatorname{M}_n(A)$ is a partial crossed product,
		\item $\operatorname{M}_n(A)$ is an epsilon-crossed product,
		\item there is  a map  $\nu: G\to I_{u}(\operatorname{M}_n(R))$ such that $\operatorname{M}_n(A_g)\simeq\, _1(\operatorname{M}_n(R)1_{\nu_g})_{\nu(g)},$ as $R$-bimodules,
		
		\item there is  a map  $\kappa: G\to H(\operatorname{M}_n(A))$ such that $\kappa(g)\in \operatorname{M}_n(A_g)$ and  \[\operatorname{M}_n(R)\kappa(g)=\operatorname{M}_n(A_g)=\kappa(g)\operatorname{M}_n(R),\] for all $g\in G$.
		
	\end{enumerate}
	
\end{thm}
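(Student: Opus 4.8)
The plan is to reduce the entire statement to Lemma~\ref{equiv}, applied not to $A$ but to the matrix ring $B:=\operatorname{M}_n(A)$ itself. The crucial preliminary observation is that, by Example~\ref{exam}, the ring $B$ is again epsilon-strongly $G$-graded, with homogeneous components $B_g=\operatorname{M}_n(A_g)$ and identity component $B_1=\operatorname{M}_n(A_1)=\operatorname{M}_n(R)$; moreover its epsilon elements are $E_g=\epsilon_gI_n$, as computed in \eqref{Ep}. Once this identification is fixed, assertions (ii), (iii) and (iv) are literally conditions (a), (d) and (b) of Lemma~\ref{equiv}(iii), read off for the epsilon-strongly graded ring $B$ in place of $A$ and for $\operatorname{M}_n(R)$ in place of $R$. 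So the theorem becomes essentially a corollary of Lemma~\ref{equiv} via Example~\ref{exam}.

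First I would record the equivalence (i)$\Leftrightarrow$(ii). This is immediate from Theorem~35 of \cite{NYOP}, which states that for any epsilon-strongly graded ring the notions of partial crossed product and epsilon-crossed product coincide. Applying it to $B=\operatorname{M}_n(A)$ gives exactly that $\operatorname{M}_n(A)$ is a partial crossed product if and only if it is an epsilon-crossed product. Note that this step already uses that $\operatorname{M}_n(A)$ is epsilon-strongly graded, since the very notion of epsilon-invertible element (hence of epsilon-crossed product) is only defined in that setting.

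Next I would establish (ii)$\Leftrightarrow$(iii)$\Leftrightarrow$(iv) by invoking Lemma~\ref{equiv}(iii) for $B$. Concretely, condition (ii) is precisely (a) of that lemma for $B$; condition (iv), which asks for a map $\kappa\colon G\to H(\operatorname{M}_n(A))$ with $\kappa(g)\in\operatorname{M}_n(A_g)$ and $\operatorname{M}_n(R)\kappa(g)=\operatorname{M}_n(A_g)=\kappa(g)\operatorname{M}_n(R)$, is exactly (b) of that lemma with $R$ replaced by $\operatorname{M}_n(R)$; and condition (iii), which asks for a map $\nu\colon G\to I_u(\operatorname{M}_n(R))$ with $\operatorname{M}_n(A_g)\simeq{}_{\id}(\operatorname{M}_n(R)1_{\nu(g)})_{\nu(g)}$, is exactly (d). Thus the chain (a)$\Leftrightarrow$(b)$\Leftrightarrow$(d) of Lemma~\ref{equiv}(iii), transported to $B$, yields (ii)$\Leftrightarrow$(iv) and (ii)$\Leftrightarrow$(iii) at once. (The intermediate condition (c), namely $\compo{B}\subseteq\operatorname{im}\,\omega$, is not listed in the theorem but passes through the same argument.)

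The single step where care is genuinely needed is the set-up itself: confirming that $\operatorname{M}_n(A)$ is epsilon-strongly graded with the stated homogeneous components and epsilon elements, so that Lemma~\ref{equiv} legitimately applies with $\operatorname{M}_n(R)$ as base component. This is exactly the content of Example~\ref{exam}, so no real obstacle remains. I would also remark that the bimodule isomorphisms in (iii) and (iv) are understood over $\operatorname{M}_n(R)$, i.e.\ as $\operatorname{M}_n(R)$-bimodule isomorphisms, which is the precise form in which Lemma~\ref{equiv} produces them.
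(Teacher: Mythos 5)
Your proposal is correct and follows exactly the paper's own route: the paper likewise obtains (i)$\Leftrightarrow$(ii) from Theorem~35 of \cite{NYOP} and derives the remaining equivalences by applying Lemma~\ref{equiv}(iii) to the epsilon-strongly graded ring $\operatorname{M}_n(A)$ furnished by Example~\ref{exam}. Your write-up is simply a more explicit version of the same two-line argument.
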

\begin{proof} The equivalence (i) $\Leftrightarrow$ (ii) follows from Theorem 35 of \cite{NYOP}. The other equivalences are an immediate consequence of Lemma \ref{equiv} (iii).
\end{proof}

\section{Graded ring endomorphisms}\label{gri}

Throughout this section, $G$ denotes a group, $A=\mathlarger{\mathlarger{\oplus}}_{g\in G}\,A_g$ a 
$G$-graded ring, $R=A_1$ and $\cC:=\operatorname{A-gr}$ the category of left graded $A$-modules.
Given $M\in \cC$, consider the graded ring $\gm{M}=\bigoplus_{l \in G} \operatorname{Mor}_A(M,M)_l$ of graded endomorphisms of $M$ with degree $l$; see details in \eqref{graded-end}. In this section we are interested in characterizing when $\gm{M}$ is epsilon-strongly graded and when it is a partial crossed product. We specialize these problems for induced modules $A\otimes_{R}M\in \cC$ with  $M\in \lmod{R}.$  Notice that according to Proposition 3.15 of \cite{MPS} any simetrically graded $A$-module $N,$ that is $A_{g}A_{g\m}N_g=N_g$ for any $g\in G$ is isomorphic in $\cC$ to an induced module $A\otimes_{R}M,$ for some $M\in \lmod{R}$

\subsection {Graded morphisms}\label{sub-51}
 Let $M,N\in\cC$. Following \cite[pg. 269]{KT}, we say that {\it  $N$ divides $M$} if it is isomorphic to a graded direct summand of $M^{(n)}$, for some $n\in \N$; in this case we  write $ N \,|\, M$. Notice that $ N \,|\, M$ if and only if there are graded morphisms $f_1,\ldots, f_n: M\to N$ and $g_1,\ldots, g_n:N\to M$ such that $\sum_{i=1}^n f_i\circ g_i= \id_N$. If $N|M$ and $M|N$, we write $M\sim N$. 
%

Let $N\in\cC$ and  $N'$ be  a non-zero graded direct summand of $N$. Notice that the projection  $\pi_{N'}:N\to N'$ and the inclusion map $\iota_{N'}:N'\to N$ are graded morphisms and 
\begin{align}\label{def-epN'}
\epsilon_{N'}:=\iota_{N'}\circ\pi_{N'},
\end{align}
is a non-zero idempotent in $\operatorname{End}_{\cC}(N)$.
The next result give us a characterization  of when a graded  direct summand of $N$ divides $M.$

\begin{prop}\label{partdiv} Let $M,N\in\cC$ and  $N'$ a non-zero graded direct summand of $N$. Then $ N' \,|\, M$  if and only if  there are 
	$n\in \N$ and graded morphisms $f_1,\ldots, f_n: M\to N$ and $g_1,\ldots, g_n: N\to M$ such that 
\begin{equation}\label{carsem}\sum_{i=1}^n f_i\circ g_i=\epsilon_{N'}.\end{equation}  
\end{prop}

\begin{proof}
Assume that $N'$ divides $M$. Thus, there are $n\in \N$ and graded morphisms $f'_1,\ldots, f'_n: M\to N'$ and $g'_1,\ldots, g'_n: N'\to M$ such that $\sum_{i=1}^n f'_i\circ g'_i=\id_{N'}$.
Take
$f_i:=\iota_{N'}\circ f'_i$ and $g_i:=g'_i\circ\pi_{N'}$, for all $1\leq i\leq n$. Notice that $f_i: M\to N$, $g_i: N\to M$ are graded morphisms and 
$$\sum_{i=1}^n f_i\circ g_i=\sum_{i=1}^n \iota_{N'}\circ(f'_i \circ g'_i)\circ \pi_{N'}= \epsilon_{N'}.$$
Conversely, setting $g_i':=g_i\circ\iota_{N'}$ and $f_i':= \pi_{N'}\circ f_i$,  then
$$\sum_{i=1}^n f'_i\circ g'_i=\sum_{i=1}^n  \pi_{N'}\circ( f_i\circ g_i)\circ\iota_{N'}= \pi_{N'}\circ\ep_{N'}\circ\iota_{N'}=\id_{N'}.$$ Thus $N'$ divides $M$. 
\end{proof}

Let $M=\mathlarger{\mathlarger{\oplus}}_{g\in G}\,M_g$  and $N=\mathlarger{\mathlarger{\oplus}}_{g\in G}\,N_g$ be  objects in $\cC$. For $l \in G$,  we denote by  $M(l)$ the left {\it $l$-suspension } of $M$, that is, the  $G$-graded module $M$ with $g$-homogeneous component $\big(M(l)\big)_g=M_{gl}$. 
We recall that  $f\in \operatorname{Mor}_A(M,N)$ is a {\it graded morphism of degree $l$} if $f(M_g)\subset N_{gl},$ for all $g\in G$.
The additive subgroup  of $ \operatorname{Mor}_A(M,N)$ consisting of all graded morphism from $M$ to $N$ of degree $l$ is denoted by $ \operatorname{Mor}_A(M,N)_l$. Observe that $\sum_{l \in G}\operatorname{Mor}_A(M,N)_l$ is a direct sum of additive groups and then we put

$$\operatorname{Mor}_A(M,N):=\bigoplus_{l \in G} \operatorname{Mor}_A(M,N)_l.$$ Hence $\operatorname{Mor}_A(M,N)$  is a $G$-graded abelian group and 

\begin{align*} \operatorname{Mor}_A(M,N)
	=\bigoplus_{l \in G} \operatorname{Mor}_{\cC}(M,N(l))
	=\bigoplus_{l \in G}\operatorname{Mor}_{\cC}(M(l\m),N).\end{align*} 
Moreover, the set   
\begin{align}\label{graded-end}
	\gm{M}:=\operatorname{Mor}_A(M,M)
\end{align}
with the usual addition and multiplication given by $uv=v\circ u$, for all $u,v\in \gm{M}$, is a $G$-graded ring with $\gm{M}_1= \operatorname{End}_{ \cC}(M)$.
In order to  give an example  of when $\gm{M}$ is epsilon-strongly graded, we proceed with the next.
\begin{prop}\label{equivi} Let $M,N\in\cC$,  $N'$ a non-zero graded direct summand of $N,$ $\epsilon_{N'}$ defined as in  \eqref{def-epN'} and  $f \in  \operatorname{Mor}_{ \cC}(M,N)$ (resp. $g\in  \operatorname{Mor}_{ \cC}(N,M)$). Then the following assertions are equivalent:
	\begin{enumerate} [\rm (i)]
		\item  $\epsilon_{N'}\circ f=f$ (resp. $g\circ\epsilon_{N'}=g$) ; \vm
		\item  $\operatorname{im}\, f\subseteq N'$ (resp. $\ker \epsilon_{N'}\subseteq \ker g$) ; 
		\item there is  $f'\in  \operatorname{Mor}_{ \cC}(M,N')$  such that $f=\iota_{N'}\circ f'$ (resp. there is  $g'\in \operatorname{Mor}_{ \cC}(N',M)$ such that $g=g'\circ \pi_{N'}$). 
	\end{enumerate} 
%
\end{prop}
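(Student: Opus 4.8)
The plan is to prove the equivalence of the three assertions for a morphism $f \in \operatorname{Mor}_{\cC}(M,N)$ (the parenthetical claims about $g$ being entirely dual), exploiting that $\epsilon_{N'} = \iota_{N'}\circ\pi_{N'}$ is the idempotent projecting $N$ onto its graded direct summand $N'$. Since $N = N' \oplus N''$ as graded modules, I have the standard relations $\pi_{N'}\circ\iota_{N'} = \id_{N'}$ and $\operatorname{im}\,\iota_{N'} = N'$, together with $\ker\pi_{N'} = N''$. These are the only structural facts I need; everything reduces to bookkeeping with these maps in the category $\cC$.

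First I would show (i) $\Rightarrow$ (ii). Assuming $\epsilon_{N'}\circ f = f$, I compute $\operatorname{im}\,f = \operatorname{im}(\epsilon_{N'}\circ f) \subseteq \operatorname{im}\,\epsilon_{N'} = \operatorname{im}\,\iota_{N'} = N'$, using that $\operatorname{im}\,\epsilon_{N'} \subseteq \operatorname{im}\,\iota_{N'}$ because $\epsilon_{N'}$ factors through $\iota_{N'}$. Next, for (ii) $\Rightarrow$ (iii), from $\operatorname{im}\,f \subseteq N'$ I define $f' := \pi_{N'}\circ f \in \operatorname{Mor}_{\cC}(M,N')$; then $\iota_{N'}\circ f' = \iota_{N'}\circ\pi_{N'}\circ f = \epsilon_{N'}\circ f$, and since $f$ has image in $N'$ on which $\epsilon_{N'}$ acts as the identity (as $\epsilon_{N'}\circ\iota_{N'} = \iota_{N'}\circ\pi_{N'}\circ\iota_{N'} = \iota_{N'}$), I get $\epsilon_{N'}\circ f = f$, hence $f = \iota_{N'}\circ f'$. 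Finally (iii) $\Rightarrow$ (i) is immediate: if $f = \iota_{N'}\circ f'$, then
\begin{align*}
\epsilon_{N'}\circ f = \iota_{N'}\circ\pi_{N'}\circ\iota_{N'}\circ f' = \iota_{N'}\circ f' = f,
\end{align*}
where I used $\pi_{N'}\circ\iota_{N'} = \id_{N'}$.

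For the parenthetical statements concerning $g \in \operatorname{Mor}_{\cC}(N,M)$, I would run the dual argument on the source side. The condition $g\circ\epsilon_{N'} = g$ is equivalent to $\ker\epsilon_{N'} \subseteq \ker g$, which in turn is equivalent to $g$ factoring as $g = g'\circ\pi_{N'}$ for some $g' \in \operatorname{Mor}_{\cC}(N',M)$. Here I note $\ker\epsilon_{N'} = \ker\pi_{N'} = N''$ (since $\iota_{N'}$ is injective), and the factorization through $\pi_{N'}$ follows from the universal property of the quotient/cokernel; concretely one sets $g' := g\circ\iota_{N'}$ and checks $g'\circ\pi_{N'} = g\circ\iota_{N'}\circ\pi_{N'} = g\circ\epsilon_{N'} = g$.

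I do not anticipate any genuine obstacle here, since the statement is essentially the observation that idempotents splitting a direct-sum decomposition detect factorizations through a summand. The one point requiring a little care is that all maps and factorizations must be verified to be \emph{graded} morphisms — but $\iota_{N'}$, $\pi_{N'}$, and hence $\epsilon_{N'}$ are graded by construction (as noted just before \eqref{def-epN'}), so composites such as $\pi_{N'}\circ f$ and $g\circ\iota_{N'}$ remain in $\cC$, and the argument stays within the graded category throughout.
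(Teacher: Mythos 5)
Your proof is correct and follows essentially the same route as the paper: the implications (iii) $\Rightarrow$ (i) $\Rightarrow$ (ii) are the easy ones, and (ii) $\Rightarrow$ (iii) is handled by corestricting $f$ to $N'$ (your $f'=\pi_{N'}\circ f$ is exactly the paper's corestriction) and, dually, by restricting $g$ to $N'$ and using $\ker\pi_{N'}=\ker\epsilon_{N'}\subseteq\ker g$ to get $g=g'\circ\pi_{N'}$. You merely spell out a few steps the paper declares immediate, and your gradedness remark is consistent with the paper's setup.
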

\begin{proof} 
Note that  the implications (iii) $\Rightarrow$ (i) $\Rightarrow$ (ii) are   immediate. For (ii) $\Rightarrow$ (iii), since  $\operatorname{im}\, f\subseteq N'$, we can take the corestriction $f':M\to N'$  of $f$ to $N'$. Notice that $f=\iota_{N'}\circ f'$.   Similarly, consider the restriction $g':N'\to M$ of $g$ to $N'$. Since $N=N'\oplus\ker\epsilon_{N'}$ and $\ker \pi_{N'}=\ker \epsilon_{N'}\subset \ker g$, it follows that $g= g'\circ\pi_{N'}$. 
\end{proof}


\begin{exa}\label{first}  Let $\Bbbk$ be a field and consider the vector space $V=\Bbbk^3$ endowed with a  $\Z$-grading via $V_{-1}=\Bbbk\times \{0\}\times \{0\}, V_0=\{0\}\times \Bbbk\times \{0\},  V_{1}=\{0\}\times \{0\}\times \Bbbk$ and $V_n=\{0\},$ for $n\notin \{-1,0,1\}$.  
It is clear that $\operatorname{END}_{\Bbbk}(V)_m=\{0\}$ if $m\notin \{\pm 2,\pm 1,0\}$. Now we define the following $\Bbbk$-linear operators on $V,$ for all  $(x,y,z)\in V$,
\begin{align*}
	&\epsilon_0={\rm id}_V,& &\epsilon_{1}(x,y,z)=(x,y,0),& &\epsilon_{2}(x,y,z)=(x,0,0),&\\
	&\epsilon_{-1}(x,y,z)=(0,y,z),& &\epsilon_{-2}(x,y,z)=(0,0,z),& &u_{-1}(x,y,z)=(y,z,0),& \\
	&u_{-2}(x,y,z)=(z,0,0),& &v_1(x,y,z)=(0,x,y),&  &v_2(x,y,z)=(0,0,x),&
\end{align*}
 then $\epsilon_{-i}= v_{i}\circ  u_{-i}\in \operatorname{END}_{\Bbbk}(V)_{-i}\operatorname{END}_{\Bbbk}(V)_{i}$ and $\epsilon_{i}=u_{-i}\circ v_{i}\in \operatorname{END}_{\Bbbk}(V)_{i}\operatorname{END}_{\Bbbk}(V)_{-i}$, for $i\in\{1,2\}$.   Moreover, for $i=1,2$, we have:
\begin{align*}
	&\ker \epsilon_{i} \subset \ker f_i,& &\Ima f_i \subset \Ima \epsilon_{-i},& &\text{for all }f_i\in \operatorname{END}_{\Bbbk}(V)_i,&\\
	&\ker \epsilon_{-i} \subset \ker f_{-i},& &\Ima f_{-i} \subset \Ima \epsilon_{i},& &\text{for all }f_{-i}\in \operatorname{END}_{\Bbbk}(V)_{-i}.&
\end{align*}
Thus, Proposition \ref{equivi} implies that $\epsilon_i f =f=f\epsilon_{-i}$, for all $f\in \operatorname{END}_{\Bbbk}(V)_i$ and $i\in\{\pm 2,\pm 1,0\}$. Hence, by Proposition \ref{epsilon2},
$\operatorname{END}_{\Bbbk}(V)$ is epsilon-strongly graded.
\end{exa}

We are interested in characterizing when the graded ring $\gm{M}$ defined in \eqref{graded-end} is epsilon-strongly graded. For this, the following notion will be helpful.

\begin{defi} \label{semi-divides} Let $M,N\in\cC$. We will say that $N$ {\it  semi-divides} $M$ if there exists a non-zero graded direct summand $N'$ of $N$ such that $ N' \,|\, M$ in $\cC$ and  $\epsilon_{N'}\circ f=f$ and $g\circ\epsilon_{N'}=g$, for all  $f\in  \operatorname{Mor}_{ \cC}(M,N)$ and $g\in \operatorname{Mor}_{ \cC}(N,M)$. In this case, we denote $N\,|_{\sd}\, M$. When $N\,|_{\sd}\, M$ and $M\,|_{\sd}\, N$ we write $M\sim_{\sd} N$.  
\end{defi}

\begin{thm} \label{adivides}Let $M\in \cC$. The graded ring $B=\gm{M}$ defined in \eqref{graded-end} is epsilon-strongly graded  if and only if $M\sim_{\sd} M(l)$, for all $l \in  \operatorname{supp}(B)$.
\end{thm}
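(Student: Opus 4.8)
The plan is to use the characterization of epsilon-strongly graded rings from Proposition~\ref{epsilon2}: the graded ring $B=\gm{M}$ is epsilon-strongly graded if and only if $\operatorname{supp}(B)$ is closed under inversion and, for each $l\in\operatorname{supp}(B)$, there is a non-zero idempotent $\epsilon_l\in B_lB_{l\m}$ with $\epsilon_l\phi=\phi=\phi\epsilon_{l\m}$ for all $\phi\in B_l$. Here $B_l=\operatorname{Mor}_A(M,M)_l=\operatorname{Mor}_{\cC}(M,M(l))$, and multiplication in $B$ is composition $uv=v\circ u$. The key translation to record first is that an idempotent $\epsilon_l\in B_lB_{l\m}$ of the required form is precisely the data of graded morphisms $f_i\colon M(l\m)\to M$ and $g_i\colon M\to M(l\m)$ (equivalently, morphisms $M\to M(l)$ and $M(l)\to M$ in $\cC$ after a suspension bookkeeping) whose composite sum is $\epsilon_{M'}$ for a suitable summand, which by Proposition~\ref{partdiv} is exactly a divisibility statement between a summand of $M$ and a suspension of $M$.

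\textbf{The forward direction.}
Assuming $B$ is epsilon-strongly graded, I would fix $l\in\operatorname{supp}(B)$ and use Proposition~\ref{epsilon2} to obtain the idempotent $\epsilon_l\in B_lB_{l\m}$. Writing $\epsilon_l=\sum_i f_i g_i$ with $f_i\in B_l$ and $g_i\in B_{l\m}$, and recalling $uv=v\circ u$, I get a relation of the form $\sum_i g_i\circ f_i=\epsilon_l$ inside $\operatorname{End}_{\cC}(M)$, where $\epsilon_l$ is a non-zero idempotent, hence $\epsilon_l=\epsilon_{M'}$ for the summand $M':=\operatorname{im}\epsilon_l$ of $M$. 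Interpreting $f_i,g_i$ as morphisms between $M$ and $M(l)$ in $\cC$, Proposition~\ref{partdiv} yields $M'\,|\,M(l)$. The two one-sided conditions $\epsilon_l\phi=\phi$ and $\phi\epsilon_{l\m}=\phi$, transcribed through Proposition~\ref{equivi}, are exactly the absorption conditions in Definition~\ref{semi-divides} for all $f\in\operatorname{Mor}_{\cC}(M,M(l))$ and all $g\in\operatorname{Mor}_{\cC}(M(l),M)$; together with the symmetric idempotent $\epsilon_{l\m}\in B_{l\m}B_l$ (available since $\operatorname{supp}(B)$ is closed under inversion), this gives both $M(l)\,|_{\sd}\,M$ and $M\,|_{\sd}\,M(l)$, i.e.\ $M\sim_{\sd}M(l)$.

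\textbf{The converse.}
Conversely, assuming $M\sim_{\sd}M(l)$ for all $l\in\operatorname{supp}(B)$, I first note that $\operatorname{supp}(B)$ is closed under inversion: if $B_l\neq0$ then there is a non-zero morphism $M\to M(l)$, and the semi-divisibility data produce a non-zero morphism $M(l)\to M$, forcing $B_{l\m}\neq0$. For each such $l$, the relation $N'\,|\,M(l)$ from $M(l)\,|_{\sd}\,M$ gives, via Proposition~\ref{partdiv}, graded morphisms $f_i,g_i$ with $\sum_i f_i\circ g_i=\epsilon_{N'}$; assembling these into elements of $B_l$ and $B_{l\m}$ produces a non-zero idempotent $\epsilon_l\in B_lB_{l\m}$. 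The two absorption clauses in Definition~\ref{semi-divides}, run through the equivalence (i)$\Leftrightarrow$(iii) of Proposition~\ref{equivi}, give exactly $\epsilon_l\phi=\phi=\phi\epsilon_{l\m}$ for all $\phi\in B_l$. Proposition~\ref{epsilon2} then concludes that $B$ is epsilon-strongly graded.

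\textbf{Main obstacle.}
The delicate part is the bookkeeping of suspensions and the identification of $B_lB_{l\m}$ with the image of a sum of composites $\sum_i f_ig_i$ landing in $\operatorname{End}_{\cC}(M)$, together with verifying that the idempotent one extracts is genuinely the $\epsilon_{M'}$ attached to a \emph{graded} direct summand $M'$ and that the two one-sided semi-divisibility absorption conditions match the two identities $\epsilon_l\phi=\phi$ and $\phi\epsilon_{l\m}=\phi$ on the nose. Concretely, one must be careful that the summand $N'$ witnessing $M(l)\,|_{\sd}\,M$ and the idempotent produced by $\sum f_ig_i$ coincide, and that the absorption conditions are quantified over \emph{all} of $\operatorname{Mor}_{\cC}(M,M(l))$ and $\operatorname{Mor}_{\cC}(M(l),M)$, not merely over the chosen $f_i,g_i$; Proposition~\ref{equivi} is the tool that makes this equivalence precise, and the argument reduces essentially to invoking it twice per degree.
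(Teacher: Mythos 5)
Your proposal is correct and follows essentially the same route as the paper's proof: Proposition \ref{epsilon2} combined with Proposition \ref{partdiv}, taking $\epsilon_l=\epsilon_{M'}$ with $M'=\operatorname{im}\epsilon_l$ in the forward direction, and in the converse using the two semi-divisibility relations to supply the idempotents $\epsilon_l\in B_lB_{l\m}$ and $\epsilon_{l\m}\in B_{l\m}B_l$ whose absorption properties are quantified over all of $B_l$ and $B_{l\m}$. One small labelling slip: in the converse it is $M\,|_{\sd}\,M(l)$ (a summand $M'$ of $M$ dividing $M(l)$) that yields $\epsilon_l\in B_lB_{l\m}$, while $M(l)\,|_{\sd}\,M$ yields $\epsilon_{l\m}\in B_{l\m}B_l$ — your text attributes these the other way around — but this does not affect the substance of the argument.
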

\begin{proof}
By Proposition \ref{epsilon2}, if $B$ is  epsilon-strongly $G$-graded then  there exists  a non-zero idempotent element
$\epsilon_{l}\in B_l B_{l\m}$ 
such that $u=\epsilon_{l}u=u\circ\epsilon_{l}$ and $v=v\epsilon_{l}=\epsilon_{l}\circ v$, for all $l \in  \operatorname{Supp}(B)$, $u\in B_{l}$ and $v\in B_{l\m}$.
Let $u_1,\cdots, u_n\in B_{l}$ and $v_1,\cdots, v_n\in B_{l\m}$ be such that 
\begin{equation}\label{idemp}
\epsilon_l=\sum_{i=1}^n  u_iv_i=\sum_{i=1}^n v_i\circ u_i.\end{equation}
Since $\epsilon^2_{l}=\epsilon_{l}$, we have that $M=M'\oplus M''$, where $M'=\operatorname{im} \epsilon_{l}$ and $M''=\ker\epsilon_{l}$. Observe that $M'$ is an non-zero graded direct summand of $M$ and $\epsilon_{l}=\epsilon_{M'}$,  with $\epsilon_{M'}$ given by \eqref{def-epN'}.
Using that 
\begin{align*}
	&B_{l}= \operatorname{Mor}_{\cC}(M,M(l)),& 	&B_{l\m}= \operatorname{Mor}_{\cC}(M(l),M),&
\end{align*}
we obtain from  \eqref{idemp} and Proposition \ref{partdiv}  that  $M$  quasi-divides $M(l)$. Moreover,
$u=u\circ \epsilon_{l}=u\circ \epsilon_{M'}$ and $v=\epsilon_{l}\circ v=\epsilon_{M'}\circ v$, for all $u\in B_{l}= \operatorname{Mor}_{\cC}(M,M(l))$ and $v\in B_{l\m}= \operatorname{Mor}_{\cC}(M(l),M)$. Hence $M \,|_{\sd}\, M(l)$. Applying a similar argument for $\epsilon_{l\m}$ we conclude that  $M(l)\,|_{\sd}\, M$. Thus  $M\sim_{\sd} M(l)$, for all $l \in  \operatorname{supp}(B)$.\vm

Conversely, consider $l \in  \operatorname{Supp}(B)$. By assumption, $M\,|_{\sd}\, M(l)$ and whence there is a non-zero graded direct summand $M'$ of $M$ such that $M'$ divides $M(l)$, $\epsilon_{M'}\circ u=u$ and $v\circ\epsilon_{M'}=v$, for all $u\in \operatorname{Mor}_{\cC}(M(l),M)$ and $v\in \operatorname{Mor}_{\cC}(M,M(l))$. By Proposition \ref{partdiv}, there are $n\in \N$ and graded morphisms $u_1,\ldots, u_n: M(l)\to M$ and $v_1,\ldots, v_n: M\to M(l)$ such that $\sum_{i=1}^n v_iu_i=\sum_{i=1}^n u_i\circ v_i=\epsilon_{M'}$. Since $v_i\in B_{l}$ and $u_i\in B_{l\m}$ we conclude that $\epsilon_{l}:=\epsilon_{M'}\in B_{l}B_{l\m}$ satisfies $\epsilon_{l}v=v\circ\epsilon_{l}=v$, for all $v\in B_{l}$. Observe that $B_{l\m}\neq 0$ because $0\neq \epsilon_{l}\in B_{l}B_{l\m}$ and consequently $\operatorname{supp}(B)$ is closed under inversion. Similarly, from $M(l)\,|_{\sd}\, M$ we obtain $\ep_{l\m}\in B_{l\m}B_{l}$ that satisfies $u\epsilon_{l\m}=\epsilon_{l\m}\circ u=u$, for all $u\in B_{l\m}$. Hence Proposition \ref{epsilon2} implies that $B$ is epsilon-strongly graded by $G$.
\end{proof}

As a consequence of the previous theorem we recover below (4.5) of \cite{Da}.

\begin{cor} \label{asto} Let $M\in \cC$. The ring $\gm{M}$ defined in \eqref{graded-end} is strongly graded  if and only if $M\sim M(l)$, for all $l \in  G$.
\end{cor}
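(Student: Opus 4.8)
The plan is to derive this directly from Theorem \ref{adivides} by matching two dictionaries: on the ring side, \emph{strongly graded} versus \emph{epsilon-strongly graded}, and on the module side, \emph{divides} versus \emph{semi-divides}. The bridge is Remark \ref{rem-epsilon}, which says that an epsilon-strongly graded ring $B$ is strongly graded if and only if $\epsilon_l = 1_B$ for every $l$ (and strongly graded always implies epsilon-strongly graded). The corresponding observation on modules is that ordinary divisibility $M \,|\, M(l)$ is exactly the instance of $M\,|_{\sd}\, M(l)$ in which the non-zero graded direct summand $M'$ is all of $M$: then $\epsilon_{M'} = \id_M$ and the extra requirements $\epsilon_{M'}\circ f = f$, $g\circ \epsilon_{M'} = g$ of Definition \ref{semi-divides} hold automatically.

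For the implication ``$M\sim M(l)$ for all $l\Rightarrow B$ strongly graded'', I would assume $M\neq 0$ (the case $M=0$ forces $B=0$, where both conditions hold trivially). For each $l$, the relation $M \,|\, M(l)$ supplies graded morphisms $g_i\in B_l = \operatorname{Mor}_{\cC}(M,M(l))$ and $f_i\in B_{l\m} = \operatorname{Mor}_{\cC}(M(l),M)$ with $\sum_i f_i\circ g_i = \id_M$; in the ring $B$, where $uv = v\circ u$, this reads $\sum_i g_i f_i = 1_B \in B_l B_{l\m}$. In particular $B_l\neq 0$, so $\operatorname{supp}(B)=G$. Since full divisibility is the full-summand case of semi-divisibility, we get $M\sim_{\sd} M(l)$ for all $l\in\operatorname{supp}(B)$, so Theorem \ref{adivides} makes $B$ epsilon-strongly graded. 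Then $B_l B_{l\m}$ is a unital ideal of $B_1$ containing $1_B$, which forces its identity $\epsilon_l$ to equal $1_B$; by Remark \ref{rem-epsilon}, $B$ is strongly graded.

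For the converse, if $B$ is strongly graded then it is epsilon-strongly graded with $\epsilon_l = 1_B = \id_M$ for all $l$, and $B_l B_{l\m} = B_1 \ni 1_B \neq 0$ gives $\operatorname{supp}(B)=G$. Applying Theorem \ref{adivides} yields $M\sim_{\sd} M(l)$ for every $l\in G$; the summand $M'$ witnessing $M\,|_{\sd}\, M(l)$ satisfies $\epsilon_{M'} = \epsilon_l = \id_M$, and since $\epsilon_{M'} = \iota_{M'}\circ \pi_{M'}$ is the idempotent with image $M'$, we conclude $M' = \operatorname{im}\epsilon_{M'} = M$. Hence $M \,|\, M(l)$, and symmetrically $M(l)\,|\, M$, i.e. $M\sim M(l)$.

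I do not expect a genuine obstacle here once Theorem \ref{adivides} is available; the work is bookkeeping. The only point requiring real (if immediate) care is verifying that $\epsilon_{M'} = \id_M$ forces the direct summand $M'$ to be all of $M$, together with the clean recognition that full divisibility $\sim$ is precisely the full-summand specialization of $\sim_{\sd}$ and that $\epsilon_l = 1_B$ is precisely the full-summand specialization of the epsilon idempotents.
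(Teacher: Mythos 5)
Your proposal is correct and follows essentially the same route as the paper: both directions reduce to Theorem \ref{adivides} via the dictionary ``strongly graded $\Leftrightarrow$ $\epsilon_l=\id_M$ for all $l$'' (Remark \ref{rem-epsilon}) together with the observation that ordinary divisibility is exactly the case of semi-divisibility in which the witnessing summand $M'$ is all of $M$. Your write-up is in fact slightly more careful than the paper's (which simply asserts that the $M'$ from the proof of Theorem \ref{adivides} equals $M$), but there is no substantive difference in method.
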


\begin{proof} If $B=\gm{M}$ is strongly graded then by Proposition \ref{adivides} we have  $M\sim_{\rm sd} M(l)$ for all $l \in  G$. Moreover,  the fact that $B$ is strongly graded implies that the $A$-module  $M'$ constructed in the proof of  Proposition \ref{adivides} coincides with $M$ (in this case, we have that $\epsilon_l=\id_M$, for all $l\in G$). Thus $M\sim M(l)$, for all $l \in  G$.
	
Conversely, assume that $ M\sim M(l)$, for all $l \in  G$. Then $B$ is epsilon-strongly graded with $\epsilon_l=\id_{M}$, for each $ l\in G$,  and whence $B$ is strongly graded.
\end{proof}

Given $M\in\lmod{R}$ the induced module  $A\otimes_{R} M\in \lmod{A}$ is $G$-graded with homogeneous component $(A\otimes_{R} M)_g:=A_g\otimes_{R} M$, for all $g\in G$. This association defines a functor 
\begin{align}
	\label{functor1} &\indf:\lmod{R}\to \cC,& &\indf M=A\otimes_{R} M,& &\text{for all }M\in \lmod{R}.& 
\end{align}
For a morphism $f:M\to N$ in $\lmod{R}$, the map $\mathtt{Ind_R^A}(f)=\id_{A}\otimes f:A\otimes_{R} M\to A\otimes_{R} N$ is a morphism in $\cC$.

\begin{rem}\label{Dades} Dade's well-known result \cite[Theorem 3.8]{Da} establishes  that  $A$ is strongly graded if and only if  the functors $(-)_1: \cC\to \lmod{R}$ given by $M=\oplus_{g\in G}M_g\to M_1 $ and   $\indf$ defined in \eqref{functor1} determine a category equivalence. Thus several problems in category  $\cC$ can be translated to $\lmod{R}$ and vice versa.
\end{rem}

To the next result we need some extra notations. For each $l\in G$, we consider the set $\operatorname{supp}_l(A):=\left\{g\in G\,:\,(g,gl)\in \operatorname{supp}(A)\times \operatorname{supp}(A)\right\}$.
Also, for $M\in \lmod{R}$ and  $l\in \operatorname{supp}(A)$ we denote
\begin{align}\label{modules_N}
&N^{(l)}:=\displaystyle\bigoplus_{g\in \operatorname{supp}_l(A)}A_{gl}\otimes_R M, &&N_{(l)}:=\displaystyle\bigoplus_{g\in \operatorname{supp}_l(A)}A_{g}\otimes_R M.&	
\end{align}
Observe that $N^{(l)}$ and  $N_{(l)}$ are graded submodules of $\indf{M}=A\otimes_{R} M$. 

\begin{prop}\label{astor} Let $M\in \lmod{R}$.  Assume that $N^{(l)}\simeq N_{(l)}\simeq N^{(l\m)}$ (as graded modules), for all $l\in \operatorname{supp}(A)$.  Then  $C=\gm{\indf{M}}$ is epsilon-strongly  graded. 
\end{prop}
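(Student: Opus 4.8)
The goal is to show that $C=\gm{\indf{M}}$ is epsilon-strongly graded, and by Theorem \ref{adivides} it suffices to prove that $\indf{M}\sim_{\sd}\indf{M}(l)$ for every $l\in\operatorname{supp}(C)$. The plan is to reduce the semi-divisibility condition $\sim_{\sd}$ to the isomorphisms of the graded submodules $N^{(l)},N_{(l)},N^{(l\m)}$ supplied in the hypothesis. First I would identify $\operatorname{supp}(C)$: a degree-$l$ homogeneous component $C_l=\operatorname{Mor}_A(\indf{M},\indf{M})_l$ is nonzero precisely when there is some $g$ with both $(\indf{M})_g=A_g\otimes_R M$ and $(\indf{M})_{gl}=A_{gl}\otimes_R M$ nonzero, so the relevant indices are exactly those in $\operatorname{supp}_l(A)$, and $\operatorname{supp}(C)\subseteq\operatorname{supp}(A)$. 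This is why the hypothesis is phrased for $l\in\operatorname{supp}(A)$ and the direct sums in \eqref{modules_N} range over $g\in\operatorname{supp}_l(A)$.

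Next I would pin down the correct choice of graded direct summand $M'$ in the definition of $\sim_{\sd}$. The key observation is that $\indf{M}(l)$, in degree $g$, equals $(\indf{M})_{gl}=A_{gl}\otimes_R M$, so that $N^{(l)}=\bigoplus_{g\in\operatorname{supp}_l(A)}A_{gl}\otimes_R M$ is precisely the graded direct summand of $\indf{M}(l)$ supported on $\operatorname{supp}_l(A)$, while $N_{(l)}$ is the corresponding summand of $\indf{M}$. The hypothesis $N^{(l)}\simeq N_{(l)}\simeq N^{(l\m)}$ therefore says that these two "truncations'' of $\indf{M}(l)$ and $\indf{M}$ are isomorphic as graded modules. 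I would use such an isomorphism to produce, via Proposition \ref{partdiv}, finitely many graded morphisms $f_i\colon\indf{M}\to\indf{M}(l)$ and $g_i\colon\indf{M}(l)\to\indf{M}$ with $\sum_i f_i\circ g_i=\epsilon_{N'}$ for $N'$ the summand $N^{(l)}$ (viewed inside $\indf{M}(l)$), giving $N'\mid\indf{M}$; and symmetrically for $\indf{M}(l)\mid_{\sd}\indf{M}$ using $N^{(l\m)}\simeq N_{(l)}$.

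The remaining point, and the one I expect to be the main obstacle, is verifying the two idempotency conditions in Definition \ref{semi-divides}: that $\epsilon_{N'}\circ f=f$ for every $f\in\operatorname{Mor}_{\cC}(\indf{M},\indf{M}(l))$ and $g\circ\epsilon_{N'}=g$ for every $g\in\operatorname{Mor}_{\cC}(\indf{M}(l),\indf{M})$. By Proposition \ref{equivi} this is equivalent to showing $\operatorname{im} f\subseteq N'$ and $\ker\epsilon_{N'}\subseteq\ker g$ for \emph{all} such morphisms, not merely the ones we constructed. The content here is that any graded morphism out of $\indf{M}$ must land inside the $\operatorname{supp}_l(A)$-supported part: since $\indf{M}$ is generated over $A$ by its degree-$1$ component $R\otimes_R M\cong M$ and $A$ is epsilon-strongly graded, a degree-$l$ morphism sends the degree-$g$ piece into $A_{gl}\otimes_R M$, and the epsilon-strong relation $A_gA_{g\m}A_g=A_g$ (Proposition \ref{epsilon1}) forces the image to be concentrated on degrees $g$ with $A_g\neq 0$ and $A_{gl}\neq 0$, i.e.\ on $\operatorname{supp}_l(A)$. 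Making this support argument precise—showing that every homogeneous morphism automatically factors through the truncation defining $N'$—is the crux; once it is in place, $\indf{M}\sim_{\sd}\indf{M}(l)$ follows for all $l\in\operatorname{supp}(C)$, and Theorem \ref{adivides} yields that $C$ is epsilon-strongly graded.
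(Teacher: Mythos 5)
Your plan is correct and matches the paper's proof in substance: the paper likewise uses the isomorphism $N^{(l)}\simeq N_{(l)}$ to realize the projection onto $N^{(l)}$ as an idempotent $\epsilon_l=\iota_{N^{(l)}}\circ\pi_{N^{(l)}}\in C_lC_{l\m}$ and then verifies $\epsilon_l u=u=u\epsilon_{l\m}$ by exactly the support/degree argument you single out as the crux (via Proposition \ref{equivi}, checking $\ker\epsilon_l\subseteq\ker u$ and $\operatorname{im} u\subseteq N^{(l)}$), the only difference being that the paper applies the criterion of Proposition \ref{epsilon2} directly instead of routing through Theorem \ref{adivides}. One small remark: that crux is easier than you anticipate --- no appeal to $A_gA_{g\m}A_g=A_g$ is needed, since a degree-$l$ morphism automatically kills $N_g$ when $A_g=0$ and lands in $N_{gl}=0$ when $A_{gl}=0$, which already confines everything to $\operatorname{supp}_l(A)$.
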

\begin{proof}
Let $N:=\indf{M}=A\otimes_{R} M$ and  $l\in \operatorname{supp}(A)$. Observe that the projection map $\pi_{N^{(l)}}:N\to N^{(l)}$ is an element of $\operatorname{Mor}_{\cC}(N,N(l))$. 
Since 
\[\displaystyle N^{(l)}=\bigoplus_{g\in \operatorname{supp}_l(A)}A_{gl}\otimes_R M\simeq\displaystyle\bigoplus_{g\in \operatorname{supp}_{l}(A)}A_{g}\otimes_R M=N_{(l)},\]
it follows that the  inclusion $\iota_{N^{(l)}}:N^{(l)}\to N$, given by $\iota_{N^{(l)}}\left(A_{gl}\otimes_R M\right)=A_{g}\otimes_R M$, belongs to $\operatorname{Mor}_{\cC}(N,N(l\m))$. Hence $\epsilon_{l}:=\epsilon_{N^{(l)}}=
\iota_{N^{(l)}}\circ \pi_{N^{(l)}}= \pi_{N^{(l)}}\iota_{N^{(l)}}\in C_lC_{l\m}$. Notice that $\ker \epsilon_l\subset \ker u$, for all $u\in \operatorname{Mor}_\cC(N,N(l))=C_l$. Then, Proposition \ref{equivi} implies  that $\epsilon_{l}u=u\circ \epsilon_{l}=u$. Also, $\operatorname{im}\, u\subseteq N^{(l)}\simeq N^{(l\m)}$ and we obtain  from Proposition \ref{equivi}   that $u\epsilon_{l\m}=\epsilon_{l\m}\circ u=u$. 
In order to complete the proof, by Proposition \ref{epsilon2}, it is enough to check that $\operatorname{supp}(C)$ is closed under inversion. Let $l\in \operatorname{supp}(C)$. Then, there exists a non-zero element $u\in \operatorname{Mor}_\cC(N,N(l))=C_l$. Thus, for some $g\in \operatorname{supp}(A)$, we have that $u(A_g\otimes_{R} M)=A_{gl}\otimes_R M\neq 0$ and consequently $\iota_{N^{(l)}}$ is a non-zero element of $\operatorname{Mor}_\cC(N,N(l\m))=C_{l\m}$. Hence $l\m \in \operatorname{supp}(C)$.
\end{proof}


\subsection{Graded ring endomorphisms as partial crossed products}

Let $M\in\cC$. In order to characterize when $\gm{M}$ defined in \eqref{graded-end} is a partial crossed product we start with the following.

\begin{defi} \label{def:qd} Let $M,N\in\cC$. We will say that $M$ and $N$ are {\it epsilon-similar} if there are non-zero graded direct summands $M'$ of  $M$ and $N'$  of $N$, and morphisms  $f\in  \operatorname{Mor}_{ \cC}(M,N)$ and $g\in\operatorname{Mor}_{ \cC}(N,M)$ such that
\begin{align}\label{eq-def}
&f\circ g=\epsilon_{N'},& &g\circ f=\epsilon_{M'},& &\epsilon_{N'}\circ u=u=u\circ\epsilon_{M'},& &v\circ\epsilon_{N'}=v=\epsilon_{M'}\circ v,&	
\end{align}	
for all  $u\in  \operatorname{Mor}_{ \cC}(M,N)$ and $v\in \operatorname{Mor}_{ \cC}(N,M)$. 
\end{defi}

\begin{thm} \label{pcrp} Let $M\in\cC$ and $B=\gm{M}$ the $G$-graded ring defined in \eqref{graded-end}. The following statements are equivalent:
	\begin{enumerate}[\rm (i)]
		\item $B$ is a partial crossed product,\vm
		\item $B$ is an epsilon-crossed product,\vm
		\item $\operatorname{supp}(B)$ is closed under inversion and $M$ and $M(l)$ are epsilon-similar, for all $l \in \operatorname{Supp}(B)$.
\end{enumerate}
\end{thm}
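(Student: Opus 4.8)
The plan is to prove the chain (i) $\Leftrightarrow$ (ii) $\Leftrightarrow$ (iii), disposing of the first equivalence by citation and concentrating on (ii) $\Leftrightarrow$ (iii). Since epsilon-crossed products and partial crossed products coincide by Theorem 35 of \cite{NYOP}, the equivalence (i) $\Leftrightarrow$ (ii) is immediate. Throughout I would keep in mind the opposite-composition convention $uv=v\circ u$ of $B=\gm{M}$ together with the identifications $B_l=\operatorname{Mor}_{\cC}(M,M(l))$, $B_{l\m}=\operatorname{Mor}_{\cC}(M(l),M)$ and $B_1=\operatorname{End}_{\cC}(M)$. Under these, an element $f\in B_l$ is epsilon-invertible with inverse $g\in B_{l\m}$ exactly when $g\circ f=\epsilon_l$ and $f\circ g=\epsilon_{l\m}$, the composites being taken in $\cC$; note that $g\circ f\in\operatorname{End}_{\cC}(M)$ while $f\circ g$, which is the same underlying degree-one idempotent, is viewed in $\operatorname{End}_{\cC}(M(l))$.

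For (ii) $\Rightarrow$ (iii): if $B$ is an epsilon-crossed product then it is a partial crossed product, hence epsilon-strongly graded by Example \ref{ex-tpcp}, so $\operatorname{supp}(B)$ is closed under inversion by Proposition \ref{epsilon2}. Fixing $l\in\operatorname{supp}(B)$ and an epsilon-invertible $f\in B_l$ with inverse $g\in B_{l\m}$, I would set $M'=\operatorname{im}\epsilon_l$, a non-zero graded direct summand of $M$, and $N'=\operatorname{im}\epsilon_{l\m}$, viewed as a non-zero graded direct summand of $M(l)$, so that $\epsilon_{M'}=\epsilon_l$ and $\epsilon_{N'}=\epsilon_{l\m}$ in the notation of \eqref{def-epN'}. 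Then $g\circ f=\epsilon_{M'}$ and $f\circ g=\epsilon_{N'}$ hold by construction, and each of the four unit identities of Definition \ref{def:qd} is a direct rewriting, via $uv=v\circ u$, of the defining relations $\epsilon_l a=a=a\epsilon_{l\m}$ (for $a\in B_l$) and $\epsilon_{l\m}b=b=b\epsilon_l$ (for $b\in B_{l\m}$) supplied by Proposition \ref{epsilon1} (iii). Thus $M$ and $M(l)$ are epsilon-similar.

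For (iii) $\Rightarrow$ (ii): given epsilon-similar data $f\in B_l$, $g\in B_{l\m}$ and summands $M'\subseteq M$, $N'\subseteq M(l)$ for each $l\in\operatorname{supp}(B)$, the idempotents $\epsilon_{M'}=g\circ f$ and $\epsilon_{N'}=f\circ g$ lie in $B_lB_{l\m}$ and $B_{l\m}B_l$ respectively and are non-zero. Rewriting the unit identities through $uv=v\circ u$ shows that $\epsilon_{M'}a=a=a\epsilon_{N'}$ for all $a\in B_l$ and $\epsilon_{N'}b=b=b\epsilon_{M'}$ for all $b\in B_{l\m}$; together with the hypothesis that $\operatorname{supp}(B)$ is closed under inversion, Proposition \ref{epsilon2} yields that $B$ is epsilon-strongly graded. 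By uniqueness of the identity of the unital ideal $B_lB_{l\m}$ (Remark \ref{rem-epsilon}) I then identify $\epsilon_{M'}=\epsilon_l$ and $\epsilon_{N'}=\epsilon_{l\m}$, whence $g\circ f=\epsilon_l$ and $f\circ g=\epsilon_{l\m}$ exhibit $f$ as epsilon-invertible. Since $B_l=0$ for $l\notin\operatorname{supp}(B)$, the zero element is epsilon-invertible in every such $B_l$, so $B$ is an epsilon-crossed product.

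The main obstacle I anticipate is purely the bookkeeping of conventions: keeping the opposite-composition multiplication $uv=v\circ u$ consistent while translating each occurrence of $\epsilon_l,\epsilon_{l\m}$ between its left and right relations, and justifying the reframing that lets the single underlying degree-one idempotent $f\circ g$ serve simultaneously as $\epsilon_{l\m}\in\operatorname{End}_{\cC}(M)$ and as $\epsilon_{N'}\in\operatorname{End}_{\cC}(M(l))$. The conceptual linchpin is the uniqueness of the structural idempotents from Remark \ref{rem-epsilon}, which forces the ad hoc idempotents $\epsilon_{M'},\epsilon_{N'}$ of Definition \ref{def:qd} to coincide with the canonical $\epsilon_l,\epsilon_{l\m}$; everything else is routine verification. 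Alternatively, once $B$ is known to be epsilon-strongly graded one could invoke Theorem \ref{adivides} (since epsilon-similarity clearly implies $M\sim_{\sd}M(l)$), but the self-contained route through Proposition \ref{epsilon2} appears shortest.
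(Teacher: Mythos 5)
Your proposal is correct and follows essentially the same route as the paper: citing Theorem 35 of \cite{NYOP} for (i) $\Leftrightarrow$ (ii), taking $M'=\operatorname{im}\epsilon_l$ and $N'=\operatorname{im}\epsilon_{l\m}$ for (ii) $\Rightarrow$ (iii), and translating the epsilon-similarity data back into epsilon-invertibility for (iii) $\Rightarrow$ (ii). Your explicit appeal to Proposition \ref{epsilon2} and to the uniqueness of the idempotents from Remark \ref{rem-epsilon} in the converse direction is a welcome bit of extra care that the paper's terser argument leaves implicit, but it is the same proof.
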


\begin{proof}
The equivalence (i) $\Leftrightarrow$ (ii) follows from Theorem 35 of \cite{NYOP}. In order to prove that (ii) $\Rightarrow$ (iii), assume that $B$ is  an epsilon-crossed product and $l \in \operatorname{supp}(B)$. Since $B$ is epsilon-strongly graded, it follow by Proposition \ref{epsilon2} that $\operatorname{supp}(B)$ is closed under inversion. Also,  there is an  epsilon-invertible element $f_l\in B_{l}=\operatorname{Mor}_{\cC}(M,M(l))$. Thus, there exists  $h_{l\m}\in B_{l\m}= \operatorname{Mor}_{\cC}(M(l),M)$ such that $f_lh_{l\m}=h_{l\m}\circ f_l=\epsilon_{l}$ and $ h_{l\m}f_l=f_l\circ h_{l\m}=\epsilon_{l\m}$. Take $M'=\operatorname{im}\epsilon_l$ and $N'=\operatorname{im}\epsilon_{l\m}$ which are non-zero graded direct summands of $M$ and $M(l)$, respectively.  Since  $\epsilon_{M'}=\epsilon_l$ and $\epsilon_{N'}=\epsilon_{l\m}$, we have that $u=\epsilon_lu=u\circ\epsilon_l=u\circ\epsilon_{M'}$ and $v=v\epsilon_l=\epsilon_l\circ v=\epsilon_{M'}\circ v$, for all  $u\in B_{l}=\operatorname{Mor}_{\cC}(M,M(l))$ and $v\in B_{l\m}=\operatorname{Mor}_{\cC}(M(l),M)$. Similarly,  $u=\epsilon_{N'}\circ u$ and $v=v\circ\epsilon_{N'}$. Hence, $M$ and $M(l)$ are epsilon-similar.\vm

To prove (iii) $\Rightarrow$ (ii), consider $l \in \operatorname{supp}(B)$ and assume that $M$ and $M(l)$ are epsilon-similar. Then there are non-zero graded direct summands $M'$ of  $M$ and $M''$  of  $M(l)$ and  morphisms  $f_l\in  \operatorname{Mor}_{ \cC}(M,M(l))$ and $h_{l\m}\in\operatorname{Mor}_{ \cC}(M(l),M)$  that satisfy \eqref{eq-def}. Thus $f_l\in B_l$ is epsilon-invertible. If $l\notin\operatorname{supp}(B)$  then $l\m\notin\operatorname{supp}(B)$ and the null morphism $f_l\in B_l$ is trivially  epsilon-invertible.
\end{proof}

\begin{exa}
Let $B=\operatorname{END}_{\Bbbk}(V)$ be the epsilon-strongly graded considered in Example \ref{first}. It follows from Theorem \ref{pcrp} that $B$ is a partial crossed product.
\end{exa}

We say that $M\in \lmod{R}$  is called a  {\it $G$-invariant module} if $A_g \otimes_R M\simeq M$ in $\lmod{R}$, for all $g\in \operatorname{supp}(A)$.

\begin{prop}\label{epcros0} Let $M\in \lmod{R}$ be a $G$-invariant module. Then $C=\gm{\indf M}$ is a partial crossed product.
\end{prop}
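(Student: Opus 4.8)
The plan is to verify condition (iii) of Theorem \ref{pcrp} for $C=\gm{\indf M}$; we may assume $M\neq 0$, since otherwise $\indf M=0$ and the statement is trivial. Write $N=\indf M$, so that $N_g=A_g\otimes_R M$ and $N_1\simeq M$. First I would record that, since $N=A\otimes_R M$ is generated as an $A$-module by $N_1=1\otimes M$, every graded morphism out of $N$ is determined by its restriction to $N_1$ and such a restriction extends uniquely; thus $\operatorname{Mor}_{\cC}(N,N(l))\simeq\operatorname{Hom}_R(M,A_l\otimes_R M)$ for each $l$. As $M$ is $G$-invariant, $A_l\otimes_R M\simeq M\neq 0$ whenever $l\in\operatorname{supp}(A)$ and $A_l\otimes_R M=0$ otherwise; hence $\operatorname{supp}(C)=\operatorname{supp}(A)$, which is closed under inversion by Proposition \ref{epsilon2}. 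It then remains to produce, for each $l\in\operatorname{supp}(A)$, a graded isomorphism $N\simeq N(l)$: any such $f_l$ exhibits $N$ and $N(l)$ as epsilon-similar in the sense of Definition \ref{def:qd} (take both direct summands to be the whole modules and $f=f_l$, $g=f_l^{-1}$, so that the idempotents are the identities and the absorption conditions hold trivially), whence Theorem \ref{pcrp} gives that $C$ is a partial crossed product.

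The technical core is the behaviour of the multiplication maps $m_{g,l}\colon A_g\otimes_R A_l\otimes_R M\to A_{gl}\otimes_R M$, $a\otimes b\otimes x\mapsto ab\otimes x$. I would first show that $m_{g,l}$ is always injective: writing $\epsilon_g=\sum_i u_{i,g}v_{i,g\m}$ with $u_{i,g}\in A_g$ and $v_{i,g\m}\in A_{g\m}$, the assignment $c\otimes x\mapsto\sum_i u_{i,g}\otimes v_{i,g\m}c\otimes x$ is a well-defined left inverse, because $v_{i,g\m}c\in A_{g\m}A_{gl}\subseteq A_l$ by \eqref{eqrep2}, while $v_{i,g\m}a\in A_{g\m}A_g\subseteq R$ lets one slide scalars so that $\sum_i u_{i,g}(v_{i,g\m}a)\otimes b\otimes x=\epsilon_g a\otimes b\otimes x=a\otimes b\otimes x$. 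Now for $g,l\in\operatorname{supp}(A)$ the domain $A_g\otimes_R A_l\otimes_R M\simeq M$ is non-zero by $G$-invariance, so injectivity of $m_{g,l}$ forces $A_{gl}\otimes_R M\neq 0$, i.e.\ $gl\in\operatorname{supp}(A)$; together with $1\in\operatorname{supp}(A)$ and closure under inversion, this shows that $H:=\operatorname{supp}(A)$ is a subgroup of $G$. I regard this as the main obstacle: injectivity is formal, but it is exactly here that $G$-invariance is indispensable, and it yields the key structural fact that the support is a group.

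Finally, for $l\in H$ and $g\in H$ we have $gl\in H$, so $A_{gl}\otimes_R M\simeq M$; since $\epsilon_g a=a$ for $a\in A_g$, left multiplication by $\epsilon_g$ is the identity on $A_g\otimes_R M\simeq M$, hence (being $R$-linear and central) $\epsilon_g$ acts as the identity on $M$ and therefore on $A_{gl}\otimes_R M$. Consequently $c\otimes x=\epsilon_g c\otimes x=m_{g,l}\big(\sum_i u_{i,g}\otimes v_{i,g\m}c\otimes x\big)$ shows $m_{g,l}$ is also surjective, hence bijective. Let $\phi_l\colon A_l\otimes_R M\to M$ be the $G$-invariance isomorphism and let $f_l\colon N\to N(l)$ be the $A$-linear extension of $\phi_l^{-1}\colon M\simeq N_1\to N_l=A_l\otimes_R M$. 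On each component one has $f_l|_{N_g}=m_{g,l}\circ(\operatorname{id}_{A_g}\otimes\phi_l^{-1})$, a composite of isomorphisms when $g\in H$ (and both $N_g$ and $N(l)_g$ vanish when $g\notin H$), so $f_l$ is a graded isomorphism $N\simeq N(l)$. By the reduction in the first paragraph, this completes the proof that $C$ is a partial crossed product.
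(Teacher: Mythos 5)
Your proof is correct, but it takes a genuinely different route from the paper's. The paper keeps all the ``partial'' bookkeeping: it first invokes Proposition \ref{astor} (via the submodules $N^{(l)}\simeq N_{(l)}\simeq N^{(l\m)}$ indexed by $\operatorname{supp}_l(A)$) to get that $C$ is epsilon-strongly graded, and then assembles an explicit epsilon-invertible element $\phi_l\in C_l$ as a block sum of the invariance isomorphisms $\phi_{g,gl}$ over $g\in\operatorname{supp}_l(A)$, with zero blocks wherever $gl\notin\operatorname{supp}(A)$, checking directly that $\phi_{l\m}\circ\phi_l=\epsilon_l$. You instead prove the stronger structural fact that, in the presence of a non-zero $G$-invariant module, $\operatorname{supp}(A)$ is a subgroup of $G$: the splitting of $m_{g,l}$ built from $\epsilon_g=\sum_i u_{i,g}v_{i,g\m}$ is formal, and $G$-invariance makes the domain non-zero, forcing $gl\in\operatorname{supp}(A)$. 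This collapses $\operatorname{supp}_l(A)$ to $\operatorname{supp}(A)$, upgrades the invariance isomorphisms to genuine graded isomorphisms $N\simeq N(l)$, and lets you verify epsilon-similarity in Theorem \ref{pcrp} with the trivial summands and identity idempotents. Your route yields extra information (the support is a group, and each $\epsilon_g$ with $g\in\operatorname{supp}(A)$ acts as the identity on $M$, so the zero blocks in the paper's $\phi_l$ never actually occur), at the cost of the auxiliary argument about $m_{g,l}$; the paper's construction is more direct and does not need this observation. One caveat: your use of the elements $\epsilon_g$ and of Proposition \ref{epsilon2} presupposes that $A$ is epsilon-strongly graded, whereas the standing hypotheses of Section \ref{gri} literally only say that $A$ is $G$-graded. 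Since the conclusion fails for general graded rings (so epsilon-strongness is the intended, and necessary, setting), this is a fair reading rather than a gap, but you should state the assumption explicitly.
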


\begin{proof} 
 We claim that $C$ is epsilon-strongly graded. In fact, let $l\in \operatorname{supp}(A),$  $N:=\indf{M}$ and $N^{(l)},\, N_{(l)}$ the graded submodules of $N$ given in \eqref{modules_N}. It is clear that $N^{(l)}\simeq N_{(l)}$ (as graded modules) because $M$ is $G$-invariant. Also, using that $\operatorname{supp}_l(A)l=\operatorname{supp}_{l\m}(A)$ we obtain that $N^{(l)}\simeq N^{(l\m)}$ (as graded modules). Thus, it follows from  Proposition \ref{astor} that $C$ is epsilon-strongly graded with  $\epsilon_l=\iota_{N^{(l)}}\circ \pi_{N^{(l)}}$, and $\pi_{N^{(l)}}$ and $\iota_{N^{(l)}}$ are respectively the projection and the inclusion maps.  To prove that $C$ is a partial crossed product, take $g\in \operatorname{supp}(A)$. By assumption, there exists an isomorphism $\phi_g:A_g\otimes_R M\to  M$. Then, $\phi_{g,h}:A_g\otimes_R M\to A_h\otimes_R M$ defined by $\phi_{g,h}=\phi_{h}^{\m}\circ\phi_g$, is an isomorphism of $R$-modules with inverse $\phi_{g,h}^{-1}=\phi_{h,g}$, for all $g,h\in \operatorname{supp}(A)$.
We also define $\phi_l \in C_l$ by $$ \phi_l\,\, =\sum\limits_{g\in  \operatorname{supp}(A)}\phi'_{g,l} \,\,\,\,\,\,\,\,{\rm where}\,\,\,\,\,\,\,\,
\phi'_{g,l}=
\begin{cases}
	\phi_{g,gl},\text{     if     } g\in \operatorname{supp}_l(A),\\
	0, \text{  \quad   otherwise.     }
\end{cases}
$$
It is clear that $\phi_l$ is epsilon-invertible with $\phi_{l}^{\m}\in C_{l\m}$
given by $ \phi_{l}^{-1} =\sum\limits_{gl\in  \operatorname{supp}(A)}\phi'_{gl,l\m}$.
Indeed,  if $x\,\,\,=\sum\limits_{g\in \operatorname{supp}(A)}a_g\otimes m\in A\otimes_RM,$ then
\begin{align*}
	\phi_{l}^{\m}\circ \phi_{l}(x)&=\sum\limits_{g\in \operatorname{supp}_l(A)}a_g\otimes m=\epsilon_l(x),
\end{align*}
that is, $\phi_{l}\phi_{l\m}=\phi_{l\m}\circ\phi_{l}=\epsilon_l$. Similarly, $\phi_{l\m}\phi_{l}=\epsilon_{l\m}$ and it follows that $C$ is an epsilon-crossed product. Hence, Theorem \ref{pcrp} implies that $C$ is a partial crossed product.
\end{proof}

\section{Epsilon-strongly graded subrings which are graded equivalent to partial crossed products} \label{gradedequ}

Let $A$ and $B$ be graded rings over a group $G$. Following \cite{H0}, we say that  $A$ and $B$ are {\it $G$-graded equivalent} if there is a graded $A$-module $P$ such that $P$ is an $A$-progenerator  and  $\gm{P}\simeq B$ as graded rings, where $\gm{P}$ is given in \eqref{graded-end}.

Throughout this section,  $A=\mathlarger{\mathlarger{\oplus}}_{g\in G}\,A_g$ denotes  an epsilon-strongly graded ring and $R:=A_1$. Assume that  $R$ is semiperfect and let $\mathcal{E}=\{e_1,\ldots,e_n\}$ as in \eqref{E} below. We will see  in Theorem \ref{semicase} that there are a subgroup $G_{\mathcal{E}}$ of $G$  and an epsilon-strongly $G_{\mathcal{E}}$-graded ring $A_{G_{\mathcal{E}}}$ which is a subring of $A$ and  $A_{G_{\mathcal{E}}}$ is graded equivalent to a  partial crossed product.

\begin{rem} Let $P$ be a free $A$-module of rank $n$. Then $P\simeq A^n$, for some $n\in \mathbb{N}$. Hence, $P$ is $G$-graded with the grading induced by $A$. Also, $P$ is a progenerator in $\lmod{A}$ such that $\gm{P}\simeq \operatorname{M}_n(A)$. Then,  $A$ is graded equivalent to the  partial crossed product $\operatorname{M}_n(A)$ if and only if one of the conditions in Theorem \ref{matcro} hold.
\end{rem}


From now on in this section, $R$ is assumed to be semiperfect. By Proposition 27.10 of \cite{AF}, there exists a set  
\begin{equation}\label{E} \mathcal{E}=\{e_1,\ldots,e_n\}
\end{equation}  
of pairwise orthogonal primitive idempotents of $R$ such that $\{Re_1,\ldots,Re_n\}$ is an irredundant set of representatives of the indecomposable projective left $R$-modules.

Observe that if $e\in \mathcal{E}$ then $\epsilon_ge=0$ or $\epsilon_ge=e$. In fact, by Remark \ref{rem-epsilon}, $\ep_g$ is a central idempotent of $R$ and hence $\ep_ge$ is an idempotent of $R$. From $e=\ep_ge+(1-\ep_g)e$ follows that $\ep_ge=0$ or $\ep_ge=e$ since $e$ is primitive. Moreover, we have the following.

\begin{lem}\label{ege} Let $\mathcal{E}$ be the set given in \eqref{E}, $e\in \mathcal{E}$ and $g\in G$. The following statements are equivalent:
\begin{enumerate}[\rm (i)]
\item $\epsilon_ge=e$,\vm
\item $A_{g}e\neq \{0\}$,\vm
\item $A_{g\m}e\neq \{0\}$.
\end{enumerate}
\end{lem}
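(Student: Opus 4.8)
The plan is to convert each of the three conditions into a statement about how the central idempotents $\epsilon_g,\epsilon_{g\m}$ multiply $e$, and then to compare these. Recall from Remark \ref{rem-epsilon} that $\epsilon_g,\epsilon_{g\m}\in Z(R)$ are idempotents with $A_gA_{g\m}=R\epsilon_g$ and $A_{g\m}A_g=R\epsilon_{g\m}$, and from Proposition \ref{epsilon1}(iii) that $\epsilon_g a=a=a\epsilon_{g\m}$ for every $a\in A_g$ and, interchanging $g$ and $g\m$, that $\epsilon_{g\m}b=b=b\epsilon_g$ for every $b\in A_{g\m}$. Since $e$ is primitive, the observation preceding the lemma gives $\epsilon_g e\in\{0,e\}$ and likewise $\epsilon_{g\m}e\in\{0,e\}$, so that ``(i) holds'' and ``(i) fails'' mean $\epsilon_ge=e$ and $\epsilon_ge=0$ respectively.

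First I would settle (i)$\Leftrightarrow$(iii). For (i)$\Rightarrow$(iii), write $\epsilon_g=\sum_i u_iv_i$ with $u_i\in A_g$ and $v_i\in A_{g\m}$, which is possible because $\epsilon_g\in A_gA_{g\m}$; then $e=\epsilon_ge=\sum_i u_i(v_ie)$, so not every $v_ie$ can vanish (otherwise $e=0$), and any nonzero $v_ie$ exhibits $A_{g\m}e\neq\{0\}$. For (iii)$\Rightarrow$(i), use $b=b\epsilon_g$ for $b\in A_{g\m}$ to get $be=b(\epsilon_ge)$; hence $\epsilon_ge=0$ would force $A_{g\m}e=\{0\}$, and the contrapositive yields $\epsilon_ge\neq0$, that is $\epsilon_ge=e$.

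It remains to bring (ii) into the picture, and this is where I expect the genuine difficulty to lie. The analogous computation, now using $a=a\epsilon_{g\m}$ for $a\in A_g$, gives $ae=a(\epsilon_{g\m}e)$, so the same reasoning as above shows instead that $A_ge\neq\{0\}\Leftrightarrow\epsilon_{g\m}e=e$. Thus the whole lemma comes down to the identity $\epsilon_ge=e\Leftrightarrow\epsilon_{g\m}e=e$, and establishing this is the crux. The tool I would reach for is the strict Morita context $(R\epsilon_g,R\epsilon_{g\m},A_g,A_{g\m},m_{g,g\m},m_{g\m,g})$ of Lemma \ref{isomul}(ii): it makes $R\epsilon_g$ and $R\epsilon_{g\m}$ Morita equivalent and identifies $A_{g\m}\otimes_{R\epsilon_g}Re$ with $A_{g\m}e$, the transform of the projective $R\epsilon_g$-module $Re$. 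The delicate point I anticipate is that such an equivalence only matches $Re$ with \emph{some} indecomposable projective over $R\epsilon_{g\m}$ rather than with $Re$ itself, so passing from $\epsilon_ge=e$ to $\epsilon_{g\m}e=e$ is precisely the step that must be carried out with care, presumably invoking the properties of the idempotent set $\mathcal{E}$ furnished by semiperfectness (Proposition 27.10 of \cite{AF}).
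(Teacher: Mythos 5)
Your elementary reductions are correct and match the formal half of the paper's argument: writing $\epsilon_g=\sum_iu_iv_i$ with $u_i\in A_g$, $v_i\in A_{g\m}$ gives (i)$\Rightarrow$(iii); the identity $b=b\epsilon_g$ for $b\in A_{g\m}$ gives (iii)$\Rightarrow$(i) (this is verbatim the paper's step); and the same two computations with $g$ and $g\m$ interchanged show that (ii) is equivalent to $\epsilon_{g\m}e=e$. So you have correctly isolated the crux, $\epsilon_ge=e\Leftrightarrow\epsilon_{g\m}e=e$. But you do not prove it: your last paragraph only names a tool you ``would reach for'' and a difficulty you ``anticipate,'' so the proposal has a genuine gap at exactly the one implication that is not formal. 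The gap is real in the sense that no further manipulation of the identities $\epsilon_ga=a=a\epsilon_{g\m}$ ($a\in A_g$) will close it --- one must use that $A_g$ is finitely generated projective over $R$ with ``trace'' equal to $R\epsilon_g$.

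The paper closes the gap by proving the contrapositive of (i)$\Rightarrow$(ii): if $A_ge=\{0\}$ then $A_g\otimes_RRe\simeq A_ge=0$; since $[A_g]\in\pic{R}$ (Proposition \ref{1pic}(v)) and $\operatorname{Ann}({}_RA_g)=R(1-\epsilon_g)$, Proposition \ref{1pic} yields $\operatorname{End}({}_RA_g)\simeq R\epsilon_g$, and combining with $\operatorname{End}({}_RA_g)\simeq A_g^*\otimes_RA_g$ one gets $0=A_g^*\otimes_RA_g\otimes_RRe\simeq R\epsilon_g\otimes_RRe=R\epsilon_ge$, hence $\epsilon_ge=0$. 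Note that this sidesteps the difficulty you anticipated: nothing has to be matched with $Re$ itself; one only needs a \emph{vanishing} statement, and semiperfectness enters solely through the primitivity of $e$ (which you already used for the dichotomy $\epsilon_ge\in\{0,e\}$). Your instinct that the Morita context of Lemma \ref{isomul}(ii) is the relevant structure is sound --- it is what underlies $[A_g]\in\pic{R}$ --- but the specific consequence needed is $A_g^*\otimes_RA_g\simeq R\epsilon_g$ followed by tensoring with $Re$, and that step is absent from your proposal.
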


\begin{proof}
If $\epsilon_ge\neq e$ then $\epsilon_ge=0$. Hence $ae=(a\epsilon_{g})e=0$, for all $a\in A_{g\m}$. Consequently $A_{g\m}e= \{0\}$ and the implication (iii) $\Rightarrow$ (i) follows. In order to prove that (i) $\Rightarrow$ (ii), assume that  $A_{g}e= \{0\}$. Then $0=A_ge\simeq A_g\otimes_R Re$. Also,  $\operatorname{Ann}(_RA_g)=R(1-\epsilon_g)$. Thus, the items (ii) and (v) of Proposition \ref{1pic} imply that $R\epsilon_g\simeq \operatorname{End}(_RA_g)$. Since $\operatorname{End}(_RA_g)\simeq A^*_g\otimes_RA_g$ we obtain that
$$0=A^*_g\otimes_RA_g\otimes_R Re\simeq R\epsilon_g\otimes_R Re=R\epsilon_ge,$$
which implies $\epsilon_ge=0$ and (i) $\Rightarrow$ (ii) is proved. For (ii) $\Rightarrow$ (iii), assume that $A_{g}e\neq  \{0\}$. From (iii) $\Rightarrow$ (i) follows $\epsilon_{g\m}e=e$. Using (i) $\Rightarrow$ (ii), it follows $A_{g\m}e\neq \{0\}$.
\end{proof}

Let $\mathcal{E}$ be the set given in \eqref{E}. Given a non-empty subset $\mathcal{X}\subset \mathcal{E}$ we set 
$$G_{\mathcal{X}}=\{g\in G\mid (\forall e_i\in \mathcal{X})\,\, (\exists e_j\in \mathcal{X}) \text{   such that   } A_g\otimes_R Re_i\simeq Re_j\}.$$
Observe that $G_{\mathcal{X}}$ is non-empty since $1_G\in G_{\mathcal{X}}$. Moreover, we will see below that $G_{\mathcal{X}}$ is a subgroup of $G$.  Then, we consider the epsilon-strongly $G_{\mathcal{X}}$-graded ring $$A_{G_{\mathcal{X}}}:=\mathlarger{\mathlarger{\oplus}}_{g\in G_{\mathcal{X}}}\,A_g$$ which is a subring of $A$. Now we will prove the main result of this subsection.

\begin{thm}\label{semicase}  Let $\mathcal{E}$ be the set given in \eqref{E}, $\mathcal{X}\subset \mathcal{E}$ be a non-empty subset, $A_{G_{\mathcal{X}}}$ be as above and $P$ in $\lmod{R}$. The following statements hold:
	\begin{enumerate}[\rm (i)] 
	\item If $P$ is a   $G$-invariant progenerator  in $\lmod{R}$  then $A$ is graded equivalent to the  partial crossed product $\operatorname{END}_A(A\otimes_{R} P)$. \vm\vm
    \item If $[P]\in \pic{R}$ and $1\leq i \leq n$ then either $P\otimes_R Re_{i}=0$ or $P\otimes_R Re_{i}\simeq Re_{j}$ for some $e_j\in \mathcal{E}$.   \vm\vm
    \item $G_{\mathcal{X}}$ is a subgroup of $G$. Moreover, if $Q_{\mathcal{X}}:=\mathlarger{\mathlarger{\oplus}}_{e_i\in \mathcal{X}}Re_i$ then  $ \operatorname{END}_{A_{G_{\mathcal{X}}}}(A_{G_{\mathcal{X}}}\otimes_R Q_{\mathcal{X}})$ is a partial crossed product.\vm \vm
    \item $G_{\mathcal{E}}=\{g\in G\mid \epsilon_ge_i=e_i,\, \forall e_i \in \mathcal{E}\}=\{g\in G\mid \epsilon_ge_i\neq 0,\, \forall e_i \in \mathcal{E}\}$.\vm\vm
    \item $A_{G_{\mathcal{E}}}$ is graded equivalent to the  partial crossed product $ \operatorname{END}_{A_{G_{\mathcal{E}}}}(A_{G_{\mathcal{E}}}\otimes_R Q_{\mathcal{E}})$, where $Q_{\mathcal{E}}=\mathlarger{\mathlarger{\oplus}}_{e_i\in \mathcal{E}}Re_i$.
	\end{enumerate}
\end{thm}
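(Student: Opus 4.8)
The statement (v) is an instance of part (i), applied not to $A$ but to the ring $A_{G_{\mathcal{E}}}$. Indeed, by part (iii) the set $G_{\mathcal{E}}$ is a subgroup of $G$, so $A_{G_{\mathcal{E}}}=\bigoplus_{g\in G_{\mathcal{E}}}A_g$ is itself an epsilon-strongly $G_{\mathcal{E}}$-graded ring with identity component $R$, and part (i) is really a statement about an arbitrary epsilon-strongly graded ring, hence applies verbatim to $A_{G_{\mathcal{E}}}$. Thus the plan is to verify that $Q_{\mathcal{E}}=\bigoplus_{e_i\in\mathcal{E}}Re_i$ is a $G_{\mathcal{E}}$-invariant progenerator in $\lmod R$; part (i) with $P=Q_{\mathcal{E}}$ then yields at once that $A_{G_{\mathcal{E}}}$ is graded equivalent to the partial crossed product $\operatorname{END}_{A_{G_{\mathcal{E}}}}(A_{G_{\mathcal{E}}}\otimes_R Q_{\mathcal{E}})$. (Alternatively, the partial-crossed-product half is exactly part (iii) specialized to $\mathcal{X}=\mathcal{E}$, and the graded equivalence follows because $A_{G_{\mathcal{E}}}\otimes_R Q_{\mathcal{E}}\simeq A_{G_{\mathcal{E}}}e$ is a graded progenerator over $A_{G_{\mathcal{E}}}$, being a finitely generated projective generator since $1\in R=ReR\subseteq A_{G_{\mathcal{E}}}eA_{G_{\mathcal{E}}}$.)

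First I would check the progenerator property over $R$. Setting $e=e_1+\cdots+e_n$, orthogonality of the idempotents gives $Q_{\mathcal{E}}\simeq Re$, which is finitely generated and projective as a direct summand of $R$. Because $R$ is semiperfect, Proposition 27.10 of \cite{AF} yields a decomposition $R\simeq\bigoplus_{i=1}^n(Re_i)^{c_i}$ in which every multiplicity $c_i$ is at least $1$, since each indecomposable projective occurs as a summand of the regular module. Hence $R$ is a direct summand of $Q_{\mathcal{E}}^{(c)}$ for $c=\max_i c_i$, so $Q_{\mathcal{E}}$ is a generator, and therefore a progenerator, in $\lmod R$.

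The main work is the $G_{\mathcal{E}}$-invariance, that is, $A_g\otimes_R Q_{\mathcal{E}}\simeq Q_{\mathcal{E}}$ in $\lmod R$ for every $g\in G_{\mathcal{E}}$. Fix such a $g$. By the definition of $G_{\mathcal{E}}$, for each $i$ there is an index $\sigma_g(i)$ with $A_g\otimes_R Re_i\simeq Re_{\sigma_g(i)}$ in $\lmod R$; the crux is that $\sigma_g$ is a permutation of $\{1,\dots,n\}$. Since $G_{\mathcal{E}}$ is a subgroup we also have $g\m\in G_{\mathcal{E}}$, and Lemma \ref{isomul} (ii) furnishes the multiplication isomorphism $A_{g\m}\otimes_R A_g\simeq R\ep_{g\m}$ of $R$-bimodules. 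Combining this with $\ep_{g\m}e_i=e_i$, which holds because $g\m\in G_{\mathcal{E}}$ by part (iv), gives $A_{g\m}\otimes_R Re_{\sigma_g(i)}\simeq(A_{g\m}\otimes_R A_g)\otimes_R Re_i\simeq R\ep_{g\m}\otimes_R Re_i\simeq Re_i$, so that $\sigma_{g\m}\circ\sigma_g=\id$; symmetrically $\sigma_g\circ\sigma_{g\m}=\id$, whence $\sigma_g$ is bijective. Therefore $A_g\otimes_R Q_{\mathcal{E}}=\bigoplus_{i=1}^n(A_g\otimes_R Re_i)\simeq\bigoplus_{i=1}^n Re_{\sigma_g(i)}=Q_{\mathcal{E}}$ in $\lmod R$, as required. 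The delicate point throughout is to keep all these isomorphisms in the category $\lmod R$ rather than over the ambient graded rings, since that is precisely what part (i) consumes; granting this, part (i) applied to $A_{G_{\mathcal{E}}}$ delivers (v).
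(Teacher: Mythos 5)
Your argument addresses only item (v) of the theorem, taking items (i)--(iv) as already established; as a proof of the full five-part statement it is therefore incomplete, since (i)--(iv) each require their own nontrivial arguments in the paper --- (i) rests on Proposition \ref{epcros0} and Theorem \ref{pcrp} together with the fact that induction sends progenerators in $\lmod{R}$ to graded progenerators in $\lmod{A}$, (ii) is an indecomposability argument for $P\otimes_R Re_i$ via the dual $P^*$ and Proposition \ref{1pic}, (iii) combines a permutation argument on $\mathcal{X}$ with Proposition \ref{epcros0}, and (iv) follows from Lemma \ref{ege}. None of this is supplied or even sketched, yet all of it is used (you invoke (i), (iii) and (iv) explicitly).

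For item (v) itself, your proof is correct and is exactly the paper's route: the paper likewise deduces (v) from (i) applied to the $G_{\mathcal{E}}$-graded ring $A_{G_{\mathcal{E}}}$, citing Lam for the progenerator property of $Q_{\mathcal{E}}$ and declaring the $G_{\mathcal{E}}$-invariance ``clear''. You in fact do more than the paper here: you make explicit that (i) must be read as a statement about an arbitrary epsilon-strongly graded ring so that it applies to $A_{G_{\mathcal{E}}}$ rather than to $A$, you justify the generator property from semiperfectness, and you prove the invariance by showing that $\sigma_g$ is a permutation, using Lemma \ref{isomul}, the identity $\ep_{g\m}e_i=e_i$ from (iv), and the pairwise non-isomorphy of the $Re_i$ --- which is essentially the bijectivity of the map $\lambda_g$ already established inside the paper's proof of (iii). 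So the content you do provide is sound and faithful to the paper's approach; the gap is simply that four of the five items are consumed rather than proved.
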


\begin{proof} (i) Let $P$ be a   $G$-invariant progenerator  in $\lmod{R}$. It follows from Theorem \ref{pcrp} and Proposition \ref{epcros0} that  $\operatorname{END}_A(A\otimes_{R} P) $ is an epsilon-crossed product. Hence,  by  Theorem 35 of \cite{NYOP} one has that  $\operatorname{END}_A(A\otimes_{R} P) $  is a partial crossed product.
Since $P$ is a progenerator in $\lmod{R}$ we get $A\otimes_RP$ is a progenerator in $\lmod{A}$. Thus $A$ is graded equivalent to $\operatorname{END}_A(A\otimes_{R} P) $.\smallbreak

\noindent (ii)  
Let $[P]\in \pic{R}$. Then $P$ is a left finitely generated and projective $R$-module . Assume that $P\otimes_R Re_{i}$ is nonzero.  We claim that $P\otimes_R Re_{i}$ is indecomposable. Suppose that $P\otimes_R Re_{i}=U\oplus V$, where $U,\,V$ are non-trivial $R$-submodules of $P\otimes_R Re_{i}$. Since $P^*\otimes_R P\simeq \operatorname{End}(_RP)$ as $R$-bimodules, it follows from Proposition \ref{1pic} (ii) that there exists a central idempontent $\tilde{e}$ of $R$ such that $\operatorname{Ann}(P_R)=R\tilde{e}$ and  $P^{*}\otimes_R P\simeq R(1-\tilde{e})$. Thus, \[(P^*\otimes _R U)\oplus (P^*\otimes _R V)\simeq P^*\otimes _RP\otimes_R Re_{i}\simeq Re_i(1-\tilde{e}),\]
as $R$-bimodules. If $e_i\tilde{e}=e_i$ we obtain that  $Re_i=(Re_i)\tilde{e}\subset R\tilde{e}=\operatorname{Ann}(P_R)$ and hence $P\otimes_R Re_{i}=0$ which is an absurd. Thus, $e_i\tilde{e}=0$ and $e_i(1-\tilde{e})=e_i$ because $e_i$ is a primitive idempotent of $R$.
Consequently 
$$Re_i=Re_i(1-\tilde{e})\simeq (P^*\otimes _R U)\oplus (P^*\otimes _R V).$$
Since $Re_i$ is a left indecomposable $R$-module,  we may suppose that $Re_i\simeq P^*\otimes _R U$ and $P^*\otimes _R V=0$. Hence $P\otimes_R Re_{i}\simeq  P\otimes_R P^*\otimes _R U\simeq R(1-\tilde{e})\otimes_{R} U\simeq (1-\tilde{e})U$, where $(1-\tilde{e})U=\{(1-\tilde{e})\cdot u\,:\,u\in U\}$ is an $R$-submodule of $U$. Thus, it follows from $P\otimes_R Re_{i}=U\oplus V$ that $P\otimes_R Re_{i}=U$ and $V=0$. Therefore $P\otimes_R Re_{i}$ is indecomposable and  there is $e_j\in \mathcal{E}$ such that $P\otimes_{R} Re_i\simeq Re_j$. \smallbreak

\noindent (iii) Notice that $1\in G_{\mathcal{X}}$ because $A_1=R$ and $R\otimes_R Re_i\simeq Re_i$, for all $e_i\in \mathcal{X}$. Let   $g,h\in G_{\mathcal{X}}$  and $e\in \mathcal{X}$. By definition, there exists $f\in \mathcal{X}$ such that $A_he\simeq A_h\otimes_R Re\simeq Rf$. In particular, $A_he\neq 0$. It follows from Lemma  \ref{ege} that  $A_{h}e\neq 0$.  Then,  $\epsilon_{h\m}e\neq 0$ and consequently $\epsilon_{h\m}e=e$. Hence
\begin{align*}
	A_{gh}\otimes_R Re\simeq A_{gh}e=A_{gh}\epsilon_{h\m}e \simeq A_{gh}\epsilon_{h\m}\otimes_R Re\stackrel{\eqref{eqrep23}}{\simeq} A_{g}A_{h}\otimes_R Re.
\end{align*}
Since $g\in G_{\mathcal{X}}$, there exists $\tilde{f}\in G_{\mathcal{X}}$ such that $A_g\otimes_R Rf\simeq R\tilde{f}$. Thus
\begin{equation}\label{agh}A_{gh}\otimes_R Re\simeq A_{g}A_{h}\otimes_R Re \simeq A_{g}\otimes_RA_h\otimes_R Re\simeq R\tilde{f},\end{equation}
and it follows that $gh\in G_{\mathcal{X}}$. In order to show that $G_{\mathcal{X}}$ is closed under inverses, we consider for each $g\in G_{\mathcal{X}}$ the map $\lambda_g: \mathcal{X}\to \mathcal{X}$ defined by $\lambda_g(e_i)=e_j$ if $A_g\otimes_R Re_i\simeq Re_j$.
We claim that $\lambda_g$ is injective (and consequently bijective). In fact, if $\lambda_g(e)=\lambda_g(e')$  then  $A_g\otimes_R Re\simeq A_g\otimes_R Re'$. By Proposition \ref{1pic}  (v), $[A_g]\in \pic{R}$. Notice that $\operatorname{Ann}(_RA_g)=R(1-\epsilon_g)$ and hence Proposition \ref{1pic}  (ii) implies that  $R\epsilon_g\simeq \operatorname{End}((A_g)_R)$. Using that $\ep_g e=e$ and that $\operatorname{End}((A_g)_R)\simeq A_g^*\otimes_R A_g$, we have  
\begin{align*} 
	Re=R\epsilon_ge\simeq R\epsilon_g\otimes_R Re   \simeq A_g^*\otimes_R A_g\otimes_RRe\simeq A_g^*\otimes_R A_g\otimes_RRe'\simeq Re',
\end{align*}
which implies $e=e'$. Hence, $\lambda_g$ is bijective. Thus, given  $e\in \mathcal{X},$  there is $f\in \mathcal{X}$ such that  $A_g\otimes_RRf\simeq Re$. Therefore
$$A_{g\m}\otimes_RRe\simeq A_{g\m}\otimes_R A_g\otimes_RRf\stackrel{\eqref{eqrep23}}{\simeq} R\epsilon_{g\m}f.$$
Assume that $\epsilon_{g\m}f=0$. Then $af=a\epsilon_{g\m}f=0$, for all $a\in A_g$. Thus $A_gf=0$ which is an absurd because $A_gf\simeq Re\neq 0$. Consequently, $\epsilon_{g\m}f=f$ and $g\m\in G_{\mathcal{X}}$.

In order to check the second assertion, notice that $A_{G_{\mathcal{X}}}$   is  an epsilon-strongly graded ring. Observe that $A_g\otimes_{R} Q_{\mathcal{X}}\simeq Q_{\mathcal{X}}$, for all $g\in G_{\mathcal{X}}$. Then $Q_{\mathcal{X}}$ is  $G_{\mathcal{X}}$-invariant and the result follows from Proposition  \ref{epcros0}. \smallbreak

\noindent (iv) Let $g\in G_{\mathcal{E}}$. Then $A_ge\simeq A_g\otimes_R Re\neq 0$, for all $e\in  \mathcal{E}$. It follows from Lemma \ref{ege} $\epsilon_ge=e$. For the reverse inclusion, assume that $g\in G$ and $\epsilon_ge=e$, for all $e\in \mathcal{E}$. Again, by Lemma \ref{ege}, we obtain that $A_g\otimes Re\simeq A_ge\neq 0$. By (ii) and Proposition \ref{1pic} (v), we have that $A_g\otimes Re\simeq Rf$, for some $f\in \mathcal{E}$. Thus, $g\in G_{\mathcal{E}}$. The equality on the right side of (iv) is immediate.\smallbreak

\noindent (v) It follows from \cite[(18.10)(C)]{L}  that $Q_{\mathcal{E}}=\mathlarger{\mathlarger{\oplus}}_{e_i\in \mathcal{E}}Re_i$ is a progenerator in $\lmod{R}$. Also, it is clear that $Q_{\mathcal{E}}$ is $G_{\mathcal{E}}$-invariant. Thus the result follows from (i).
\end{proof}

Let $\mathbb{I}=\{1,2,..., n\}$. For each $g\in G$, let $\mathbb{I}_g=\{i\in \mathbb{I}\,:\, \exists j\in \mathbb{I}\text{ with } A_g\otimes_R Re_i\simeq Re_j\}$ and $\alpha_g: \mathbb{I}_{g\m}\to \mathbb{I}_g,$ where  $\alpha_g(i)=j$ if and only if  $A_{g\m}\otimes_R Re_i\simeq Re_{j}$. We saw in the proof of Theorem \ref{semicase} (iii) that $A_{g\m}\otimes Re_i\simeq Re_j$ implies that $A_{g}\otimes Re_j\simeq Re_i$. Hence, $\alpha_g$ is well-defined and it is a bijection with inverse  $\alpha_{{g}\m}$.

\begin{cor}\label{corpar} The following assertions hold. 
	\begin{enumerate}[\rm (i)] 
	\item If $A$ is strongly graded then it is equivalent to a crossed product.
	\item The family $\alpha=(\{\mathbb{I}_g\}_{g\in G}, \{\alpha_g\}_{g\in G})$ determines a set-theoretic  partial action of $G$ on $\mathbb{I}$ such that $\mathbb{I}_{g\m}= \mathbb{I}_g,$ for any $g\in G.$  
	\end{enumerate}
\end{cor}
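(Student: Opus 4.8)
For (i) the plan is to reduce everything to Theorem \ref{semicase}. If $A$ is strongly graded, then $\epsilon_g=1_A$ for every $g\in G$ by Remark \ref{rem-epsilon}, so the description in Theorem \ref{semicase}(iv) forces $G_{\mathcal{E}}=G$. I would then take $Q_{\mathcal{E}}=\mathlarger{\mathlarger{\oplus}}_{e_i\in\mathcal{E}}Re_i$, which is a progenerator of $\lmod{R}$ by \cite[(18.10)(C)]{L}. Because tensoring by $A_g$ permutes the classes $[Re_i]$ (Theorem \ref{semicase}(ii) and (iii)), one gets $A_g\otimes_R Q_{\mathcal{E}}\simeq Q_{\mathcal{E}}$ for all $g$, i.e. $Q_{\mathcal{E}}$ is $G$-invariant; hence Theorem \ref{semicase}(i) applies and $A$ is graded equivalent to the partial crossed product $\operatorname{END}_A(A\otimes_R Q_{\mathcal{E}})$. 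Finally, since all $\epsilon_g=1_A$, this endomorphism ring is strongly graded (Corollary \ref{asto}, as $A\otimes_R Q_{\mathcal{E}}\sim(A\otimes_R Q_{\mathcal{E}})(l)$) and its epsilon-invertible homogeneous elements are genuinely invertible, so the partial crossed product is an honest crossed product.

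For (ii) I would first record the key characterization $i\in\mathbb{I}_g\Leftrightarrow A_g\otimes_R Re_i\neq 0\Leftrightarrow\epsilon_g e_i=e_i$: the first equivalence is Theorem \ref{semicase}(ii) (a nonzero $A_g\otimes_R Re_i$ is some $Re_j$, so $i\in\mathbb{I}_g$), and the second is Lemma \ref{ege} together with $A_g\otimes_R Re_i\simeq A_g e_i$. As $e_i$ is primitive and $\epsilon_g\in Z(R)$ is idempotent, $\epsilon_g e_i\in\{0,e_i\}$, whence $\mathbb{I}_g=\{i:\epsilon_g e_i=e_i\}$; the symmetry $\epsilon_g e_i=e_i\Leftrightarrow\epsilon_{g\m}e_i=e_i$ of Lemma \ref{ege} then yields $\mathbb{I}_{g\m}=\mathbb{I}_g$. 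The normalization axioms $\mathbb{I}_1=\mathbb{I}$ and $\alpha_1=\id$ are immediate from $A_1=R$, and each $\alpha_g\colon\mathbb{I}_{g\m}\to\mathbb{I}_g$ is a bijection with inverse $\alpha_{g\m}$, as already observed before the statement.

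It remains to verify the composition law and the domain identity $\alpha_g(\mathbb{I}_{g\m}\cap\mathbb{I}_h)=\mathbb{I}_g\cap\mathbb{I}_{gh}$. The computational heart is the left $R$-module isomorphism
\[A_g\otimes_R A_h\otimes_R Re_i\;\simeq\;A_{gh}\otimes_R Re_i,\qquad i\in\mathbb{I}_h,\]
which I would obtain by combining the multiplication isomorphism $A_g\otimes_R A_h\simeq\epsilon_g A_{gh}$ (a module-level form of Lemma \ref{isomul}(i), derivable from the Morita context of Lemma \ref{isomul}(ii)) with \eqref{eqrep2}--\eqref{eqrep23} and the fact that $\epsilon_{h\m}e_i=e_i$ gives $\epsilon_g A_{gh}\otimes_R Re_i=A_{gh}\epsilon_{h\m}\otimes_R Re_i=A_{gh}\otimes_R Re_i$. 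Granting this, for $i\in\mathbb{I}_h$ one has $A_h\otimes_R Re_i\simeq Re_j$ and hence $A_{gh}\otimes_R Re_i\simeq A_g\otimes_R Re_j$, so $i\in\mathbb{I}_{gh}\Leftrightarrow j\in\mathbb{I}_g$; this simultaneously matches the two domains and, by reading off the index of the common indecomposable projective on both sides, yields the composition law for $\{\alpha_g\}$ (equivalently, it exhibits $\alpha$ as the set-theoretic partial action of $G$ on $\mathbb{I}$ induced by the functors $A_g\otimes_R-$). The residual bookkeeping of which indices survive is controlled by the idempotent relation \eqref{afgh} for the partial action $\gamma$ of \eqref{gamma}.

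I expect this last composition-and-domain step to be the main obstacle, precisely because the multiplication map $A_g\otimes_R A_h\to A_{gh}$ is only surjective in general; one must upgrade it to an isomorphism on the relevant corner via Lemma \ref{isomul} in order to control both the cocycle identity and the vanishing of $A_{gh}\otimes_R Re_i$ that delimits the domains. An alternative packaging, which I would use if the direct route gets unwieldy, is to invoke the unital partial representation $\Phi(g)=[A_g]$ of Proposition \ref{pr-picard} and transport its defining relations along $[P]\mapsto A_g\otimes_R P$ to the finite set $\mathbb{I}$ of isomorphism classes of indecomposable projectives.
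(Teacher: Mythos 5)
Your proposal is correct and follows essentially the same route as the paper: part (i) reduces to Theorem \ref{semicase} via $\epsilon_g=1_A\Rightarrow G_{\mathcal{E}}=G$ and then upgrades the partial crossed product to a genuine crossed product, and part (ii) rests on Lemma \ref{ege}, Theorem \ref{semicase} (ii) and the isomorphism $A_{gh}\otimes_R Re_i\simeq A_g\otimes_R A_h\otimes_R Re_i$ of \eqref{agh}, exactly as in the paper's proof. Your idempotent characterization $\mathbb{I}_g=\{i:\epsilon_g e_i=e_i\}$ is only a mild repackaging of the paper's argument for $\mathbb{I}_{g\m}=\mathbb{I}_g$, not a different method.
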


\begin{proof}
\noindent (i) Because $A$ is strongly graded, we have that $\epsilon_g=1_A$ for all $g\in G$. Then $A_{G_{\mathcal{E}}}=A$ and it follows from Theorem \ref{semicase} (v) that $A$ is graded equivalent to the epsilon-crossed product $ C=\operatorname{END}_{A}(A\otimes_R Q_{\mathcal{E}})$. But,  $A$  is strongly graded and then $C$ is strongly graded. Hence $C$ is a crossed product.\smallbreak

\noindent (ii) 	Clearly $\mathbb{I}_e=\mathbb{I}$ and $\alpha_e=\id_{\mathbb{I}}$. As we saw above, $\alpha_g$ is invertible and $\alpha^{\m}_g=\alpha_{g\m}$. In order to verify that $\alpha_{gh}$ extends  $\alpha_{g}\circ \alpha_{h}$ consider $i\in \mathbb{I}_{h\m}$ such that $j=\alpha_{h}(i)\in \mathbb{I}_{g\m}$. Then $A_{h\m}\otimes Re_i\simeq Re_{j}$ and $A_{g\m}\otimes Re_{j}\simeq Re_{k}$. Thus, $\alpha_{g}(j)=k$. As in \eqref{agh} we have that  
$A_{h\m g\m}\otimes_R Re_i\simeq A_{h\m}\otimes_RA_{g\m}\otimes_R Re_i$ which implies $\alpha_{gh}(i)=k$. Therefore $\alpha$ is a partial action of $G$ on $\mathbb{I}$. Finally, if $i\in \mathbb{I}_{g\m}$ then $A_{g\m}e_i\simeq A_{g\m}\otimes_R Re_i\simeq Re_{j}\neq 0$, for some $j\in \mathbb{I}$. By Lemma \ref{ege}, $A_{g}\otimes_R Re_i\simeq A_{g}e_i\neq 0$. From Proposition \ref{1pic} (v) and Theorem \ref{semicase} (ii) follow that $A_{g}\otimes_R Re_i\simeq Re_k$, for some $k\in \mathbb{I}$. Therefore, $i\in \mathbb{I}_g$. Since $g$ is arbitrary it follows that $\mathbb{I}_{g\m}=\mathbb{I}_g$.
\end{proof}

\begin{rem} Let $G_{\mathcal E}$ be the subgroup of $G$ defined in (iv) of Theorem \ref{semicase}. Denote by $\alpha_{G_{\mathcal E}}=(\{\mathbb{I}_g\}_{g\in G}, \{\alpha_g\})_{g\in G_{\mathcal E}}$ the restriction of the partial action $\alpha$ of $G$ on $\mathbb{I}$ which was defined above of Corollary \ref{corpar}. Observe that $\alpha_{G_{\mathcal E}}$ is indeed a global action of $G_{\mathcal E} $.
 \end{rem}

In order to illustrate Theorem \ref{semicase}, we consider the following examples.

\begin{exa} \label{ejem2.2}
	Let  $S$ be a commutative ring with a non-zero identity $1_S$ and $I$ a unital ideal of $S$ such that  $1_I\neq 1_S$. Consider the following sets of matrices:
	\begin{displaymath}
		A=  \left( 
		\begin{matrix}
			S & S  \\
			I &  S
		\end{matrix}
		\right),
		\quad
		R=A_0 =  \left( 
		\begin{matrix}
			S &  0 \\
			0 & S
		\end{matrix}
		\right)
		\,\, \text{  and  } \,\,
		A_1 =  
		\left( 
		\begin{matrix}
			0 & S  \\
			I & 0
		\end{matrix}
		\right).
	\end{displaymath}
	It is clear that  $A=A_0\oplus A_1$  is an epsilon-strongly  $\mathbb{Z}_2$-graded ring. Also, notice that  $\epsilon_{0}=\operatorname{diag}(1_S,1_S)$ and $\ep_{1}=\operatorname{diag}(1_S, 1_I)$. It is straightforward to check that there is no epsilon-invertible element in $A_1$. Thus, $A$  is not an epsilon-crossed product. Suppose that $S:=\Bbbk\times \Bbbk$ and $I=\Bbbk\times \{0\}$, where $\Bbbk$ is a field. Then $R\simeq S\times S=\Bbbk^4$ is semiperfect. Moreover,  the canonical basis $\{e_j\,:\,j=1,\ldots,4\}$ of $\Bbbk^4$ is  a complete orthogonal set of primitive idempotents and $Re_i\simeq Re_j$, for all $i,j=1,\ldots,4$. Then $\mathcal{E}=\{e_1\}$ and $G_{\mathcal{E}}=\mathbb{Z}_2$. Let $Q_{\mathcal{E}}=Re_1$. By Theorem \ref{semicase} (v), we have that $A$ is graded equivalent to the epsilon-crossed product $C=\operatorname{END}_{A}(A\otimes_R Re_1)$. 
\end{exa}

\begin{exa} Let $A=\mathlarger{\mathlarger{\oplus}}_{g\in G}\,A_g$ be an epsilon-strongly graded ring   with $R=A_1$ and consider $B:=\operatorname{M}_n(A)$. By Example \ref{exam}, $B$ is epsilon-strongly graded. If $R$ is semiperfect then $B_1=\operatorname{M}_n(R)$ is also semiperfect (see for instance \cite[(23.9)]{L2}).  We will check that  if $A$ is is graded equivalent to an partial-crossed product, then so is $B.$ Let $\mathcal{E}_R=\{e_1,\ldots,e_m\}$  be a set of
	pairwise orthogonal primitive idempotents such that $Re_1,\ldots,Re_m$ is a complete irredundant set of representatives of the indecomposable projective left $R$-modules. Suppose that $\epsilon_ge=e,$ for all $e\in \mathcal{E}_R $ and $g\in G$. 
By Morita equivalence, we have that $\mathcal{E}_B=\{Ie_1,\ldots,Ie_m\}$, where $I$ is the identity matrix of $B$,  is a set of
pairwise orthogonal primitive idempotents such that $M_n(R)Ie_1,\ldots,M_n(R)Ie_m$ is a complete irredundant set of representatives of the indecomposable projective left $B_1$-modules.
Let $E_g=\epsilon_gI$ be defined in \eqref{Ep}. Then, for all $Ie\in \mathcal{E}_B$, we have that $E_gIe=I\epsilon_ge=Ie$. Thus, $G_{\mathcal{E}_B}=G$ and $B_{G_{\mathcal{E}_B}}=B_G=B$. It follows from Theorem \ref{semicase} (v) that  $B$ is graded equivalent to an epsilon-crossed product.
\end{exa}

\end{document}